\newcommand{\1}{\mathds{1}}
\newcommand{\IC}{\mathbb C}
\newcommand{\cJ}{\mathcal{J}}
\renewcommand{\L}{\mathcal L}
\newcommand{\IR}{\mathbb R}
\newcommand{\abs}[1]{\lvert#1\rvert}
\newcommand{\norm}[1]{\lVert#1\rVert}
\renewcommand{\epsilon}{\varepsilon}
\renewcommand{\phi}{\varphi}
\newcommand{\real}{\mathbb{R}}
\newcommand{\un}{\mathbf{1}}
\newcommand{\com}{\mathbb{C}}
\newcommand{\ran}{\operatorname{ran}}
\newcommand{\A}{A}
\newcommand{\B}{B}
\newcommand{\V}{V}
\newtheorem{theorem}{Theorem}[section]
\newtheorem{lemma}[theorem]{Lemma}
\newtheorem{proposition}[theorem]{Proposition}
\newtheorem{corollary}[theorem]{Corollary}
\theoremstyle{remark}
\newtheorem{remark}[theorem]{Remark}
\newtheorem{example}[theorem]{Example}
\theoremstyle{definition}
\newtheorem{definition}[theorem]{Definition}
\title{Intertwining Curvature Bounds for Graphs and Quantum Markov Semigroups}
\author{Florentin Münch}
\address{Max Planck Institute for Mathematics in the Sciences, Inselstraße 22, 04103 Leipzig, Germany}
\email{cfmuench@gmail.com}
\author{Melchior Wirth}
\address{Institute of Science and Technology Austria (ISTA), Am Campus 1, 3400 Klosterneuburg, Austria}
\email{melchior.wirth@ist.ac.at}
\author{Haonan Zhang}
\address{Department of Mathematics, University of South Carolina, Columbia, SC, 29208, USA}
\email{haonanzhangmath@gmail.com}
\begin{document}

\begin{abstract}
Based on earlier work by Carlen--Maas and the second- and third-named author, we introduce the notion of intertwining curvature lower bounds for graphs and quantum Markov semigroups. This curvature notion is stronger than both Bakry--Émery and entropic Ricci curvature, while also computationally simpler than the latter. We verify intertwining curvature bounds in a number of examples, including finite weighted graphs and graphs with Laplacians admitting nice mapping representations, as well as generalized dephasing semigroups and quantum Markov semigroups whose generators are formed by commuting jump operators. By improving on the best-known bounds for entropic curvature of depolarizing semigroups, we demonstrate that there can be a gap between the optimal intertwining and entropic curvature bound. In the case of qubits, this improved entropic curvature bound implies the modified logarithmic Sobolev inequality with optimal constant.
\end{abstract}

\maketitle

\section{Introduction}

Ricci curvature lower bounds have a plethora of applications in geometry and analysis on Riemannian manifolds (see the survey \cite{Wei07} for example). There are several characterizations of Ricci curvature lower bounds that extend beyond Riemannian manifolds, such as Bakry--Émery theory in terms of $\Gamma$-calculus of semigroups \cite{BE85}, and Lott--Sturm--Villani theory using geodesic convexity of the entropy in Wasserstein spaces \cite{Stu06a,Stu06b,LV09}, and recent years have seen a lot of activity in the study of Ricci curvature bounds for singular geometries.

There are however difficulties and divergent approaches when trying to extend these notions of Ricci curvature lower bounds to discrete spaces (graphs). On graphs, the Bakry--Émery Ricci curvature lower bound mimics the $\Gamma_2$-condition using the graph Laplacian \cite{LY10,Sch98}, while the entropic Ricci curvature lower bound is based on optimal transportation by introducing a dynamical Wasserstein distance \cite{Maa11,EM12,Mie13}. Unlike in the continuous setting, these two notions do not agree on graphs. Furthermore, there are many other, non-equivalent notions of Ricci curvature lower bounds on graphs, but we will mainly focus on the Bakry--Émery Ricci curvature lower bounds and entropic Ricci curvature lower bounds here. Both of them have their own advantages, for example, the former is usually easier to verify, while the latter is closely related to the gradient flow of the relative entropy so that from a (positive) Ricci curvature lower bound one may derive functional inequalities such as modified logarithmic Sobolev inequality. However, establishing entropic curvature lower bounds in concrete examples is often difficult.

These difficulties become even more challenging when moving to the noncommutative world where functions are replaced by operators and there may be no notion of underlying space. In this setting, instead of functions on a space one looks at an abstract algebra of operators. It includes both continuous and discrete spaces when the algebra is commutative. When the algebra is not commutative (like matrix algebras for example), which is often the case for plenty of interesting examples arising in operator algebras and mathematical physics, the above known theories and tools are not available. The study of Ricci curvature lower bounds in this context was motivated by problems arising in quantum information theory, noncommutative harmonic analysis and mathematical physics \cite{JM10,CM14,JZ15,CM17,MM17,DR20}. Among others, the two Ricci curvature lower bounds following Bakry--Émery and Lott--Sturm--Villani are still of particular interest in the noncommutative setting. On one hand, the study of these curvature conditions has seen great progress in the past decade inspired by work in the classical (continuous and discrete spaces) world. On the other hand, its development in this bigger picture also provides ideas and insights that are new in the classical framework, as we shall discuss in the present article. 

One contribution of this paper is to develop a method to partially overcome the aforementioned difficulty of proving entropic Ricci curvature lower bounds, either in the discrete or the noncommutative setting. It is inspired by certain intertwining identities which are elementary as well as crucial in some classical examples, such as the heat semigroup and the Ornstein--Uhlenbeck semigroup. In the noncommutative setting, this was first used by Carlen and Maas \cite{CM17} and was further developed by the second- and third-named author \cite{WZ21,WZ23}, Li, Junge and LaRacuente \cite{LJLR20} and Brannan, Gao and Junge \cite{BGJ22,BGJ23}. One may understand the intertwining approach as a linearization scheme that simplifies the heavy computations involved in obtaining entropic Ricci curvature lower bounds.

This idea has served so far as one of main tools in proving entropic Ricci curvature lower bounds in the noncommutative setting, but the flexibility provided by the formulation from \cite{WZ21} has barely been exploited so far, and in the case of graphs, the intertwining approach to entropic curvature bounds has not gotten attention yet.

Based on these ideas, we introduce the notion of intertwining curvature lower bounds for graphs and quantum Markov semigroups in this article. This curvature notion combines the advantages of Bakry--Émery and entropic curvature: It can be characterized by a $\Gamma_2$-type criterion, which makes it computationally accessible, and it has the same strong applications to functional inequalities as entropic curvature (in fact, it is stronger than both Bakry--Émery and entropic curvature lower bounds).

In general, a Bakry--Émery curvature lower bound does not imply any entropic curvature lower bound without further assumptions. One key finding of this article is that nonetheless for certain special classes of examples, an intertwining curvature bound and thus also an entropic curvature bound can be derived from a Bakry--Émery-type curvature bound.

Maybe surprisingly, such examples include discrete spaces. It seems to be novel to apply this intertwining technique to discrete examples. In particular, for finite weighted graphs, we give a universal intertwining curvature lower bound in terms of the vertex and edge weights. For some graphs with better structures, we recover known results in \cite{EM12} using this new intertwining method. In the noncommutative setting, we study quantum Markov semigroups with generators formed by commuting jump operators, quantum dephasing semigroups and depolarizing semigroups. Our results improve some known Ricci curvature lower bounds and sometimes give sharp estimates. In particular, we show that the optimal modified logarithmic Sobolev constant for the depolarizing semigroup on qubits can be obtained from an entropic curvature bound, thus answering a question of Carlen.

\subsection{Overview of graph Ricci curvature notions}

We now discuss how the intertwining curvature in the graph case fits in the wide landscape of discrete Ricci curvature notions. Besides the intertwining curvature, there are Bakry--\'Emery curvature \cite{LY10,Sch98}, entropic curvature \cite{EM12,Mie13}, Forman curvature \cite{For03,JM21} and Ollivier curvature \cite{Oll09,LLY11}.
Broadly speaking, there are two categories, namely $\ell^\infty/\ell^2$ type curvatures, and Hodge/graph Laplacian based curvatures, as shown in Table~\ref{tab:table1}, indicating that the intertwining curvature fills the gap between the other well established curvature notions.

\begin{table}[!htbp]
  \begin{center}
    \caption{Comparison of graph Ricci curvatures.}
    \label{tab:table1}
\begin{tabular}{c|c|c}
Category & $\ell^\infty$-gradient & $\ell^2$-gradient\\
\hline
Graph Laplacian &Ollivier curvature&  Bakry--\'Emery/Entropic curvature\\
\hline
Hodge Laplacian & Forman curvature& Intertwining curvature
\end{tabular}
  \end{center}
\end{table}
While the Ollivier curvature was originally defined via optimal transport, its connection to the graph Laplacian was established in \cite{MW19}.
For Ollivier and Forman curvature, the gradient is measured in an $\ell^\infty$-way, i.e., it is based on the Lipschitz constant with respect to the combinatorial metric. In contrast, for Bakry--\'Emery, entropic and intertwining curvature, the gradient is measured via some $\Gamma$-calculus which is based on a local scalar product for vector fields and thus $\ell^2$-like.

The Forman and intertwining curvature depend on a given Hodge Laplacian on vector fields. In both cases, there is the freedom of choosing the two-cells, given an underlying graph as 1-skeleton. When restricting to gradient vector fields, the Forman curvature turns into Ollivier curvature \cite{JM21}, and the intertwining curvature into the Bakry--\'Emery curvature as shown later in the article. Therefore, the Hodge Laplacian based curvature is in both cases a lower bound for the corresponding graph Laplacian based curvature.

The following table presents an overview of the known consequences of lower Ricci curvature bounds. As lower Forman curvature bounds have almost the same consequences as the weaker Ollivier curvature bounds, we omit the Forman curvature in the table.
\begin{table}[h!]
  \begin{center}
    \caption{Implications of suitable curvature lower bounds.}
    \label{tab:table2}
\begin{tabular}{c|c|c|c}
 & Bakry--\'Emery & Entropic & Ollivier\\
\hline
Lichnerowicz spectral gap &yes \cite{BCLL17}& yes \cite{EM12}& yes \cite{Oll09}  \\
\hline
Diameter bounds   &yes \cite{LMP18} &yes \cite{EM12,Kam20}& yes \cite{Oll09}\\
\hline
Gradient estimates & yes \cite{LL18}  &yes \cite{EM12,EF18} & yes \cite{MW19}\\
\hline
Gaussian concentration & yes \cite{Sch98}  &yes \cite{EM12} & yes \cite{JMR19}\\
\hline
Buser inequality & yes \cite{KKRT16}  &yes \cite{EF18} & yes \cite{Mun23}\\
\hline
No expanders & yes \cite{Sal22}  &yes? & yes \cite{Sal22}\\
\hline
Li--Yau/volume doubling & yes \cite{BHLLMY15,Mun19}  &? & ?
\\ \hline
Homology vanishing & yes \cite{KMY21,MR20}  &no? & yes?
\\ \hline
Log-Sobolev inequality & ?  &yes \cite{EM12,EF18} & yes \cite{Mun23b}
\\ \hline
Convexity of entropy & ?  &yes \cite{EM12} & ? 
\\
\end{tabular}
  \end{center}
\end{table}

As the intertwining curvature is a lower bound of both Bakry--\'Emery and entropic curvature, one obtains the implications from both worlds. 
Particularly, 
assuming positive or non-negative intertwining curvature, all properties listed in Table~\ref{tab:table2} follow immediately.

As the intertwining curvature for graphs is a very recent development, there are not many examples investigated yet.
It is therefore an interesting open question to give useful curvature bounds for graphs whose Bakry--\'Emery and entropic curvature bounds are well known such as interacting particle systems and random transpositions on the symmetric group \cite{KKRT16,EFS20,FM16,EMT15,EHMT17}.

Besides the discrete Ricci curvature notions discussed above, there was recently introduced a new version of entropic curvature based on Schrödinger bridges for the $\ell^1$ Wasserstein metric \cite{Sam22,RS23,Con22,Ped23}. This curvature notion can be seen as a crossover of Ollivier and entropic curvature by Erbar/Maas/Mielke.
Furhter Ricci curvature notions for graphs have been introduced recently in \cite{DL22} and \cite{Ste23}, however both seem to have no connection to Riemannian geometry. 

\subsection{Plan of the paper}
The plan of this paper is as follows. In Section \ref{sec:graphs} we focus on weighted graphs and Ricci curvature bounds based on the graph Laplacians. We study the family of gradient estimates $\mathrm{GE}_\Lambda(K,\infty)$ depending on a mean function $\Lambda$. In the case when $\Lambda$ is the arithmetic mean, this gradient estimate reduces to a Bakry--Émery curvature lower bound, while for $\Lambda$ the logarithmic mean, one obtains an entropic curvature lower bound. We then introduce intertwining curvature as a linearized version of the gradient estimate, which implies $\mathrm{GE}_\Lambda(K,\infty)$ for all means $\Lambda$ (\Cref{thm:intertwining_GE_graphs}).

We show that for certain examples, namely complete graphs (\Cref{ex:complete_graph}) and graphs whose graph Laplacian is given by a so-called mapping representation (\Cref{thm:mapping}), the optimal intertwining curvature bound coincides with the Bakry--Émery curvature bound. Furthermore, we give a universal lower bound on the intertwining curvature in terms of the minimal transition rate between vertices (\Cref{thm:universal_bound_intertwining}).

Section \ref{sec:QMS} then treats quantum Markov semigroups on matrix algebras. As in the case of graphs, we introduce a notion of intertwining curvature and show that it is stronger than both Bakry--Émery and entropic curvature (\Cref{thm:grad_est_from_intertwining_QMS}). We prove intertwining curvature lower bounds for dephasing semigroups in terms of the Pimsner--Popa index of the conditional expectation (\Cref{prop:intertwining_curv_dephasing}), which are sharp in the case of depolarizing semigroups. Furthermore we prove that if a family of quantum Markov semigroups has a uniform intertwining curvature lower bounds and their generators have mutually commuting jump operators, then the product of these semigroups has the same intertwining curvature bound (\Cref{thm:intertwining_QMS_very_commuting}).

In the last subsection, we focus on depolarizing semigroups and show that the entropic curvature lower bound obtained from intertwining curvature can be improved (\Cref{thm:GE_depolarizing}). Since the intertwining curvature bound is optimal, this yields the first noncommutative examples of entropic curvature bounds that cannot be obtained by intertwining. As an application, we conclude that the optimal modified logarithmic Sobolev inequality for the depolarizing semigroup on qubits can be obtained from an entropic curvature bound.

\subsection*{Acknowledgments}
M. W. and H. Z. want to thank Eric Carlen and Jan Maas for productive discussions. M. W. was funded by the Austrian Science Fund (FWF) under
the Esprit Programme [ESP 156]. For the purpose of Open Access, the authors have applied a CC BY public copyright licence to any Author Accepted Manuscript (AAM) version arising from this submission.




\section{Intertwining curvature for weighted graphs}\label{sec:graphs}

In this section we discuss two known curvature conditions for finite graphs, namely Bakry--Émery curvature \cite{LY10,Sch98} and entropic curvature \cite{Maa11,EM12,Mie13}, which belong to a larger class of curvature conditions that can be expressed by a common type of gradient estimates. Then we introduce a new curvature notion, \emph{intertwining curvature}, based on earlier intertwining approaches to entropic curvature bounds. This curvature notion is stronger than both entropic and Bakry--Émery curvature and in fact the whole class of gradient estimates mentioned above.

We show that for certain examples, namely complete graphs (\Cref{ex:complete_graph}) and graphs whose graph Laplacian is given by a so-called mapping representation (\Cref{thm:mapping}), the optimal intertwining curvature bound coincides with the Bakry--Émery curvature bound. Furthermore, we give a universal lower bound on the intertwining curvature in terms of the minimal transition rate between vertices (\Cref{thm:universal_bound_intertwining}).

Before we turn to curvature, let us shortly recap some basic facts and definition regarding analysis on finite graphs. In our presentation, we mostly follow \cite{KL12,KLW21}. One point of deviation is that we work with complex-valued functions. This difference is largely irrelevant in the present context, but working with complex-valued functions is standard for matrix algebras in the context of quantum information and operator algebras, hence we use complex scalars throughout the article.

A \emph{finite weighted graph} is a triple $(X,b,m)$ consisting of a finite set $X$, a symmetric function $b\colon X\times X\to [0,\infty)$ that vanishes on the diagonal and a function $m\colon X\to (0,\infty)$. Such a triple gives rise to the structure of a simple undirected graph with vertex set $X$ and edge set $\{\{x,y\}\mid x,y\in X,\,b(x,y)>0\}$.

We identify $m$ with the measure with counting density $m$ on $X$. Then $\ell^2(X,m)$ is the space of all complex-valued functions on $X$ endowed with the inner product
\begin{equation*}
\langle f,g\rangle_m=\sum_{x\in X}\overline{f(x)}g(x)m(x)
\end{equation*}
for $f,g\in\ell^2(X,m)$. 

The graph Laplacian of a finite weighted graph $(X,b,m)$ is the operator
\begin{equation*}
L\colon \ell^2(X,m)\to\ell^2(X,m),\,Lf(x)=\frac 1{m(x)}\sum_{y\in X}b(x,y)(f(x)-f(y)).
\end{equation*}
The graph Laplacian is a positive self-adjoint operator on $\ell^2(X,m)$. Note that some authors use the opposite sign convention that makes the graph Laplacian a negative self-adjoint operator. The associated heat semigroup $(P_t)_{t\geq 0}$ is given by $P_t=e^{-tL}$ for $t\geq 0$.

As with $m$, we identify $b$ with the measure with counting density $b$ on $X\times X$. We define the discrete gradient as
\begin{equation*}
\partial\colon \ell^2(X,m)\to\ell^2(X\times X,b/2),\,(\partial f)(x,y)=f(x)-f(y).
\end{equation*}

The following lemma shows that the graph Laplacian and the discrete gradient are related by an identity analogous to the relation $-\Delta=\nabla^\ast\nabla$ for the Laplacian and gradient on Euclidean space. Throughout this article, we always use $T^\ast :H_2\to H_1$ to denote the adjoint of a bounded linear operator $T:H_1\to H_2$, where $H_1$ and $H_2$ are two Hilbert spaces. 

\begin{lemma}
One has $L=\partial^\ast\partial$.
\end{lemma}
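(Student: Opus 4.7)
The plan is to verify the identity by directly computing the inner product $\langle \partial f, \partial g\rangle_{b/2}$ and matching it with $\langle f, Lg\rangle_m$ for arbitrary $f,g \in \ell^2(X,m)$. Since the space is finite-dimensional, this suffices.

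First I would expand
\begin{equation*}
\langle \partial f,\partial g\rangle_{b/2}=\frac{1}{2}\sum_{x,y\in X}\overline{(f(x)-f(y))}(g(x)-g(y))\,b(x,y)
\end{equation*}
into four terms. Next I would exploit the symmetry $b(x,y)=b(y,x)$: relabeling $(x,y)\mapsto(y,x)$ shows that the two "diagonal'' terms $\overline{f(x)}g(x)b(x,y)$ and $\overline{f(y)}g(y)b(x,y)$ yield the same double sum, and likewise for the two "cross'' terms $\overline{f(x)}g(y)b(x,y)$ and $\overline{f(y)}g(x)b(x,y)$. This is exactly where the factor $1/2$ in the measure $b/2$ pays off: it cancels the resulting factor of $2$, leaving
\begin{equation*}
\langle \partial f,\partial g\rangle_{b/2}=\sum_{x,y\in X}\overline{f(x)}(g(x)-g(y))\,b(x,y).
\end{equation*}

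Pulling $\overline{f(x)}$ out of the $y$-sum and recognizing the inner sum as $m(x)\,Lg(x)$ by definition of the graph Laplacian gives
\begin{equation*}
\langle \partial f,\partial g\rangle_{b/2}=\sum_{x\in X}\overline{f(x)}\,Lg(x)\,m(x)=\langle f,Lg\rangle_m.
\end{equation*}
Since this holds for all $f,g$, we conclude $\partial^\ast\partial=L$.

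There is no real obstacle here; the proof is a short symmetric reshuffling of a double sum. The only point that might trip up a careful reader is the bookkeeping of complex conjugates (the paper works with complex-valued functions), but since $b$ is real-valued and symmetric, the argument goes through verbatim as in the real case.
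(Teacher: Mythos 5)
Your proposal is correct and is essentially the paper's own argument run in the opposite direction: the paper starts from $\langle f,Lg\rangle_m$ and symmetrizes into two halves to reach $\langle\partial f,\partial g\rangle_{b/2}$, while you expand $\langle\partial f,\partial g\rangle_{b/2}$ and use the symmetry of $b$ to collapse back to $\langle f,Lg\rangle_m$. Both hinge on exactly the same relabeling $(x,y)\mapsto(y,x)$ and the factor $1/2$ in the measure, so there is no substantive difference.
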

\begin{proof}
If $f,g\in \ell^2(X,m)$, then
\begin{align*}
\langle f,Lg\rangle_m&=\sum_{x\in X}\overline{f(x)}\sum_{y\in X}b(x,y)(g(x)-g(y))\\
&=\frac 1 2\sum_{x,y\in X}b(x,y)\overline{f(x)}(g(x)-g(y))+\frac 1 2\sum_{x,y\in X}b(x,y)\overline{f(y)}(g(y)-g(x))\\
&=\frac 1 2\sum_{x,y\in X}b(x,y)\overline{(f(x)-f(y))}(g(x)-g(y))\\
&=\langle\partial f,\partial g\rangle_{b/2}.\qedhere
\end{align*}
\end{proof}

Let us now turn to a notion of Ricci curvature lower bounds for finite weighted graph. For $f,g\colon X\to \IC$, we write $f\otimes g$ for the function on $X\times X$ given by $(f\otimes g)(x,y)=f(x)g(y)$.

Recall that a function $h:[0,\infty)\to [0,\infty)$ is \emph{operator monotone} if for any positive semi-definite matrices $A\ge B$ of arbitrary size, we have $f(A)\ge f(B)$. We say that a function $\Lambda\colon [0,\infty)\times[0,\infty)\to [0,\infty)$ is an \emph{operator mean function} if it is continuous and there exists an operator monotone function $h\colon (0,\infty)\to (0,\infty)$ such that $h(1)=1$ and $\Lambda(s,t)=sh(t/s)$ for $s,t>0$. See \cite{KA80} for more details.

\begin{definition}
Let $K\in \real$. Let $(X,b,m)$ be a finite weighted graph with graph Laplacian $L$ and heat semigroup $(P_t)$ and let $\Lambda\colon [0,\infty)\times[0,\infty)\to [0,\infty)$ be an operator mean function. We say that $(X,b,m)$ satisfies the gradient estimate $\mathrm{GE}_\Lambda(K,\infty)$ if
\begin{equation*}
\langle \partial(P_t f),\Lambda\circ (\rho\otimes\rho)\partial(P_t f)\rangle_{b/2}\leq e^{-2Kt}\langle \partial f,\Lambda\circ (P_t\rho\otimes P_t\rho)\partial f\rangle_{b/2}
\end{equation*}
for all $f\in \ell^2(X,m)$, $\rho\colon X\to [0,\infty)$ and $t\geq 0$.
\end{definition}

\begin{remark}
For the definition of the gradient estimate $\mathrm{GE}_\Lambda$, it is not essential that $\Lambda$ be an operator mean function. Indeed, a wider class of functions $\Lambda$ was considered for example in \cite{EM12}. However, when studying matrices in \Cref{sec:QMS}, it is important that $\Lambda$ is an operator mean function, so we chose to impose this condition also in this section for sake of uniformity. 
\end{remark}

For two choices of means $\Lambda$, the gradient estimate $\mathrm{GE}_\Lambda(K,\infty)$ coincides with well-known notions of discrete lower Ricci curvature bounds. Before we elaborate on this, we need another piece of notation. The \emph{carré du champ} or \emph{gradient form} associated with the graph Laplacian $L$ is given by
\begin{equation*}
\Gamma\colon\ell^2(X,m)\times\ell^2(X,m)\to\ell^1(X,m),\,\Gamma(f,g)=\frac 1 2(\overline{(Lf)}g+\overline{f}Lg-L(\overline f g)).
\end{equation*}
More explicitly,
\begin{equation*}
\Gamma(f,g)(x)=\frac 1{2m(x)}\sum_{y\in X}b(x,y)(\overline{f(x)-f(y)})(g(x)-g(y)).
\end{equation*}
We simply write $\Gamma(f)$ for $\Gamma(f,f)$.

\begin{example}
Let $\Lambda(s,t)=s$ for $s,t\geq 0$. This function is called the \emph{left-trivial mean}. For $f\in \ell^2(X,m)$ and $\rho\colon X\to [0,\infty)$ we have
\begin{align*}
\langle \partial f,\Lambda\circ(\rho\otimes\rho)\partial f\rangle_{b/2}&=\frac 1 2\sum_{x\in X}\rho(x)\sum_{y\in X}b(x,y)\abs{f(x)-f(y)}^2\\
&=\langle \rho,\Gamma(f)\rangle_m.
\end{align*}
In fact, since $\Gamma(f)=\Gamma(\bar f)$, this is true for the whole family of affine means $\Lambda(s,t)=\alpha s+(1-\alpha)t$ with $\alpha\in [0,1]$, in particular the arithmetic mean.

Thus the gradient estimate $\mathrm{GE}_\Lambda(K,\infty)$ is equivalent to
\begin{equation}\label{ineq:BE GE}
\langle \rho,\Gamma(P_t f)\rangle_m\leq e^{-2Kt}\langle P_t\rho,\Gamma(f)\rangle_m=e^{-2Kt}\langle\rho,P_t\Gamma(f)\rangle_m
\end{equation}
for $f\in\ell^2(X,m)$, $\rho\colon X\to (0,\infty)$, or simply
\begin{equation*}
\Gamma(P_t f)\leq e^{-2Kt}P_t\Gamma(f)
\end{equation*}
for $f\in\ell^2(X,m)$.

This is the exponential formulation of the well-known Bakry--Émery curvature condition for graphs \cite{LL18}. 
\end{example}

\begin{example}
Let $\Lambda(s,t)=\frac{s-t}{\log s-\log t}$ for $s,t>0$ and continuously extend it to $s=0$ or $t=0$. This function is called the \emph{logarithmic mean}. The gradient estimate $\mathrm{GE}_\Lambda(K,\infty)$ is equivalent to an entropic Ricci curvature lower bound $K$ in the sense Erbar--Maas \cite{EM12}, as was shown in \cite[Theorem 3.1]{EF18}.
\end{example}

The optimal constant $K$ for which $(X,b,m)$ satisfies $\mathrm{GE}_\Lambda(K,\infty)$ depends on the choice of operator mean $\Lambda$. In particular, a Bakry--Émery curvature lower bound $K$ does not imply an entropic curvature lower bound $K$, nor vice versa. This seems to be known among experts, but seems only implicit in the literature, so we provide explicit examples here.

\begin{example}
Let $X=\{0,1\}$, $b(0,1)=\lambda(1-\lambda)$ and $m(0)=\lambda$, $m(1)=1-\lambda$ with $\lambda\in (0,1)$. It was shown by Maas \cite[Remark 2.11, Proposition 2.12]{Maa11} that the entropic curvature is bounded below by
\begin{equation*}
K=\frac 1 2+\inf_{\beta\in (-1,1)}\frac 1{1-\beta^2}\frac{\lambda(1+\beta)-(1-\lambda)(1-\beta)}{\log(\lambda(1+\beta))-\log((1-\lambda)(1-\beta))}.
\end{equation*}
By the logarithmic--geometric mean inequality,
\begin{equation*}
K\geq \frac 1 2+\inf_{\beta\in (-1,1)}\frac 1 {1-\beta^2}\sqrt{\lambda(1-\lambda)(1-\beta^2)}=\frac 1 2+\sqrt{\lambda(1-\lambda)}.
\end{equation*}
In particular, if $\lambda\neq \frac 1 2$, then $K>\frac 1 2+\min\{\lambda,1-\lambda\}$.

On the other hand, the optimal bound in the Bakry--Émery criterion is $K=\frac 1 2+\min\{\lambda,1-\lambda\}$, see e.\,g. \Cref{ex:complete_graph} below.
\end{example}

\begin{example}
We take the example from \cite[Subsection 4.3]{Mun23b}, namely $X=\{1,2,3\}$, $b(1,2)=10$, $b(2,3)=1$, $b(1,3)=0$, $m_\epsilon(1)=1/\epsilon$, $m_\epsilon(2)=1$, $m_\epsilon(3)=1/20$.

This graph has Bakry--Émery curvature bounded below by $1$ independently from $\epsilon$, as can be computed via \cite[Proposition 2.1]{HM23}. However, as shown in \cite{Mun23b}, the optimal constant in the modified logarithmic Sobolev inequality goes to zero as $\epsilon\to 0$. Thus, $(X,b,m_\epsilon)$ cannot have a positive lower bound on the entropic curvature that is independent of $\epsilon$. In fact, as shown in \cite{KLMP23}, the entropic curvature lower bound goes to $-\infty$ as $\epsilon\to 0$.
\end{example}


What makes the Bakry--Émery curvature condition stand out among the family of gradient estimates $\mathrm{GE}_\Lambda$ is the affine dependence on $\rho$, which allows for it to be reformulated as a pointwise inequality for the carré du champ \eqref{ineq:BE GE}. In particular, since the inequality is quadratic in $f$, for fixed $x\in X$ the best possible constant $K$ such that $\Gamma_2(f)(x)\geq K\Gamma(f)(x)$ is valid for all $f\in \ell^2(X,m)$ is the smallest eigenvalue of a symmetric matrix. This makes the computation of Bakry--Émery curvature computationally much more accessible than the gradient estimate for other means such as the logarithmic mean \cite{CKLP22,CKLLS22}.

To deal with this non-affine dependence on $\rho$ in the case of non-trivial means, the second- and third-named author introduced \cite{WZ21} a sufficient criterion for the gradient estimate $\mathrm{GE}_\Lambda(K,\infty)$ using \emph{intertwining techniques} based on ideas pioneered by Carlen and Maas in the noncommutative setting \cite{CM17,CM20a}. We take this condition as definition for a new curvature notion for weighted graphs.

Let $\mathcal J\colon \ell^2(X\times X,b/2)\to \ell^2(X\times X,b/2),\,\mathcal J\xi(x,y)=-\overline{\xi(y,x)}$. In particular, $\mathcal J(\partial f)=\partial(\overline f)$ for $f\in \ell^2(X,m)$. Further let
\begin{equation*}
\vec\Gamma(\xi,\eta)(x)=\frac 1 {2m(x)} \sum_{y\in X}b(x,y)\overline{\xi(x,y)}\eta(x,y)
\end{equation*}
for $\xi,\eta\in \ell^2(X\times X,b/2)$ and $x\in X$. As usual, we use $\vec \Gamma (\xi)$ to denote $\vec \Gamma (\xi,\xi)$ for simplicity. Note that 
\begin{equation}\label{eq:vec gamma 1}
	\vec\Gamma(\partial f,\partial g)=\Gamma(f,g),\qquad f,g\in \ell^2(X,m)
\end{equation}
and 
\begin{equation}\label{eq:vec gamma 2}
	\langle f, \vec\Gamma(\xi,\eta)\rangle_{m}=\langle (f\otimes 1)\xi,\eta\rangle_{b/2}.
\end{equation}

\begin{definition}
Let $(X,b,m)$ be a finite weighted graph with heat semigroup $(P_t)$, and let $K\in\IR$. We say that $(X,b,m)$ has intertwining curvature lower bound $K$ if there exists a strongly continuous semigroup $(\vec P_t)$ on $\ell^2(X\times X,b/2)$ such that
\begin{itemize}
\item[(a)] $\partial P_t=\vec P_t\partial$ for all $t\geq 0$,
\item[(b)] $\vec P_t \cJ=\mathcal J\vec P_t$,
\item[(c)] $\vec \Gamma(\vec P_t \xi)\leq e^{-2Kt}P_t\vec\Gamma(\xi)$ for all $\xi\in \ell^2(X\times X,b/2)$ and $t\geq 0$.
\end{itemize}
\end{definition}

If we take $\xi=\partial f$ in condition (c), then the inequality reduces to the Bakry--Émery gradient estimate $\Gamma(P_t f)\leq e^{-2Kt}P_t\Gamma(f)$ thanks to (a). One key finding of \cite{WZ21} was that an intertwining curvature lower bound $K$ implies the gradient estimate $\mathrm{GE}_\Lambda(K,\infty)$ not only for the left-trivial mean, but all operator mean functions $\Lambda$ \cite[Theorem 2]{WZ21}.

We give a new perspective on this result here in terms of interpolation theory of Hilbert spaces. The key result we use is Donoghue's theorem \cite[Theorems 1,2]{Don67}. We only formulate it in the finite-dimensional case to avoid technicalities, but the result remains valid in infinite dimensions with appropriate modifications. If $H$ is a finite-dimensional Hilbert space and $A$ an invertible positive self-adjoint operator on $H$, then we let $H^A$ be the Hilbert space with underlying vector space $H$ and inner product $\langle \xi,\eta\rangle_A=\langle\xi,A\eta\rangle_H$.

Now let $H_1$, $H_2$ be finite-dimensional Hilbert spaces and $A_j$, $B_j$ commuting invertible positive self-adjoint operators on $H_j$, $j\in \{1,2\}$. If $\Lambda$ is an operator mean functions, then $(H_1^{\Lambda(A_1,B_1)},H_2^{\Lambda(A_2,B_2)})$ is an exact interpolation couple for $(H_1^{A_1},H_1^{B_1})$ and $(H_2^{A_2},H_2^{B_2})$. This means that if a linear operator $T\colon H_1\to H_2$ is a contraction from $H_1^{A_1}$ to $H_2^{A_2}$ and from $H_1^{B_1}$ to $H_2^{B_2}$, then it is also a contraction from $H_1^{\Lambda(A_1,B_1)}$ to $H_2^{\Lambda(A_2,B_2)}$.

\begin{theorem}\label{thm:intertwining_GE_graphs}
If the finite weighted graph $(X,b,m)$ has intertwining curvature lower bounded by $K$, then it satisfies the gradient estimate $\mathrm{GE}_\Lambda(K,\infty)$ for every operator mean function $\Lambda$.
\end{theorem}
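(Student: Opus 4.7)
The plan is to apply Donoghue's interpolation theorem as stated just before the statement, with the two endpoints being the left-trivial mean $\Lambda(s,t)=s$ and the right-trivial mean $\Lambda(s,t)=t$. Condition (c) supplies the left-endpoint estimate directly, and a symmetry argument using (b) bootstraps it to the right endpoint. Interpolation then delivers the inequality for every operator mean, and the intertwining (a) transfers the resulting estimate from $\vec P_t$ to $\partial P_t f$.

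For the first step, let $M_g$ denote the multiplication operator on $\ell^2(X\times X,b/2)$ by a bounded real function $g$ on $X\times X$. Pairing (c) with an arbitrary $\rho\colon X\to[0,\infty)$, using self-adjointness of $P_t$ on $\ell^2(X,m)$ together with the identity \eqref{eq:vec gamma 2}, gives the left-trivial endpoint
$$\langle M_{\rho\otimes 1}\vec P_t\xi,\vec P_t\xi\rangle_{b/2}\le e^{-2Kt}\langle M_{(P_t\rho)\otimes 1}\xi,\xi\rangle_{b/2}.$$
A short computation using the symmetry of $b$ shows $\langle M_{1\otimes\rho}\xi,\xi\rangle_{b/2}=\langle M_{\rho\otimes 1}\mathcal{J}\xi,\mathcal{J}\xi\rangle_{b/2}$; applying the previous bound to $\mathcal{J}\xi$ and invoking $\vec P_t\mathcal{J}=\mathcal{J}\vec P_t$ from (b) then yields the right-trivial endpoint
$$\langle M_{1\otimes\rho}\vec P_t\xi,\vec P_t\xi\rangle_{b/2}\le e^{-2Kt}\langle M_{1\otimes(P_t\rho)}\xi,\xi\rangle_{b/2}.$$

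These two estimates say that $e^{Kt}\vec P_t$ is simultaneously a contraction $\ell^2(X\times X,b/2)^{M_{(P_t\rho)\otimes 1}}\to\ell^2(X\times X,b/2)^{M_{\rho\otimes 1}}$ and $\ell^2(X\times X,b/2)^{M_{1\otimes(P_t\rho)}}\to\ell^2(X\times X,b/2)^{M_{1\otimes\rho}}$. The four multiplication operators are pairwise commuting, and if $\rho$ is strictly positive they are invertible on $\ell^2(X\times X,b/2)$, so Donoghue's theorem applies. Since commuting positive multiplication operators are simultaneously diagonal, functional calculus combined with the defining formula $\Lambda(s,t)=sh(t/s)$ gives $\Lambda(M_{\rho\otimes 1},M_{1\otimes\rho})=M_{\Lambda\circ(\rho\otimes\rho)}$, and likewise for $P_t\rho$; the interpolated inequality therefore reads
$$\langle M_{\Lambda\circ(\rho\otimes\rho)}\vec P_t\xi,\vec P_t\xi\rangle_{b/2}\le e^{-2Kt}\langle M_{\Lambda\circ((P_t\rho)\otimes (P_t\rho))}\xi,\xi\rangle_{b/2}.$$
Specializing $\xi=\partial f$ and using the intertwining $\vec P_t\partial f=\partial P_t f$ from (a) yields $\mathrm{GE}_\Lambda(K,\infty)$ when $\rho>0$, and the general case $\rho\ge 0$ follows by continuity after replacing $\rho$ by $\rho+\epsilon$ and letting $\epsilon\downarrow 0$. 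The only genuine obstacle is the invertibility hypothesis in Donoghue's theorem, which is handled precisely by this $\epsilon$-perturbation; verifying the identity $\Lambda(M_f,M_g)=M_{\Lambda(f,g)}$ for commuting positive multiplication operators is routine from the spectral theorem.
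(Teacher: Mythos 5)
Your proof is correct and follows essentially the same route as the paper's: pairing condition (c) with $\rho$ to obtain the $\rho\otimes 1$ endpoint, transferring it to the $1\otimes\rho$ endpoint via $\cJ$ and condition (b), interpolating with Donoghue's theorem, and then restricting to $\xi=\partial f$ via (a). The only addition is your explicit $\epsilon$-perturbation to meet the invertibility hypothesis in Donoghue's theorem, a point the paper leaves implicit.
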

\begin{proof}
Let $(\vec P_t)$ be a strongly continuous semigroup on $\ell^2(X\times X,b/2)$ realizing the intertwining curvature bound $K$. Property (c) implies
\begin{align*}
	\langle\rho, \vec \Gamma (\vec P_t \xi)\rangle_{m}\le e^{-2Kt} \langle \rho, P_t\vec \Gamma (\xi)\rangle_m,\qquad \rho: X\to [0,\infty)
\end{align*}
which is nothing but 
\begin{equation*}
	\langle \vec P_t\xi,(\rho\otimes 1)\vec P_t\xi\rangle_{b/2}\leq e^{-2Kt}\langle\xi,(P_t\rho\otimes 1)\xi\rangle_{b/2}.
\end{equation*}
Replacing $\xi$ with $\cJ \xi$, one obtains 
\begin{equation*}
	\langle \vec P_t \xi,(1\otimes \rho)\vec P_t\xi\rangle_{b/2}\leq e^{-2Kt}\langle \xi,(1\otimes P_t\rho)\xi\rangle_{b/2}.
\end{equation*}
It follows from the interpolation theorem quoted above that
\begin{equation*}
\langle \vec P_t\xi,\Lambda\circ (\rho\otimes \rho)\vec P_t\xi\rangle_{b/2}\leq e^{-2Kt}\langle \xi,\Lambda\circ(P_t\rho\otimes P_t\rho)\xi\rangle_{b/2}
\end{equation*}
for every operator mean function $\Lambda$.

Restricting to $\xi=\partial f$ and taking condition (a) into account, the gradient estimate $\mathrm{GE}_\Lambda(K,\infty)$ follows.
\end{proof}

\begin{remark}
The proof makes it transparent why we need to require condition (c) in the definition of intertwining curvature lower bounds for all $\xi\in \ell^2(X\times X,b/2)$ instead of only $\xi\in\operatorname{ran}\partial$: Unless $b=0$, the latter is not invariant under multiplication by $1\otimes\rho$ and $\rho\otimes 1$, which is a requirement in the interpolation theorem we use. Indeed, if $b(x,y)>0$, then $\1_x\partial(\1_x)(x,y)=1$ and $\1_x\partial(\1_x)(y,x)=0$ imply that $\1_x\partial(\1_x)\notin\operatorname{ran}\partial$.
\end{remark}

Like Bakry--Émery curvature, intertwining curvature has a reformulation akin to the $\Gamma_2$ criterion.

\begin{proposition}\label{prop:Gamma_2_intertwining}
A finite weighted graph $(X,b,m)$ with graph Laplacian $L$ has intertwining curvature lower bound $K$ if and only if there exists a linear operator $\vec L$ on $\ell^2(X\times X,b/2)$ such that $\vec L\partial=\partial L$, $\vec L\mathcal J=\mathcal J L$ and
\begin{equation*}
\frac 1 2 (\vec \Gamma(\vec L\xi,\xi)+\vec\Gamma(\xi,\vec L\xi)-L\vec\Gamma(\xi))\geq K\vec\Gamma(\xi)
\end{equation*}
for all $\xi\in \ell^2(X\times X,b/2)$.
\end{proposition}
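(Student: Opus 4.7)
The plan is to identify $\vec L$ with the generator of $(\vec P_t)$ and establish the standard equivalence between an exponential decay estimate along the semigroup and its infinitesimal $\Gamma_2$-type version. Since everything takes place in a finite-dimensional Hilbert space, existence and regularity issues are automatic: any bounded operator generates a semigroup via the matrix exponential, so both directions reduce to short differentiation arguments.

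\emph{Forward implication.} Assume (a)--(c) hold, let $\vec L$ be the bounded generator of $(\vec P_t)$, so that $\vec P_t=e^{-t\vec L}$. Differentiating (a) and (b) at $t=0$ gives $\vec L\partial=\partial L$ and $\vec L\mathcal J=\mathcal J\vec L$. The two sides of (c) coincide at $t=0$, so comparison of derivatives at $t=0$ yields
\begin{equation*}
\left.\frac{d}{dt}\right|_{t=0}\vec\Gamma(\vec P_t\xi)\;\leq\;\left.\frac{d}{dt}\right|_{t=0}e^{-2Kt}P_t\vec\Gamma(\xi).
\end{equation*}
By sesquilinearity of $\vec\Gamma$, the left-hand derivative equals $-\vec\Gamma(\vec L\xi,\xi)-\vec\Gamma(\xi,\vec L\xi)$, while the right-hand one is $-2K\vec\Gamma(\xi)-L\vec\Gamma(\xi)$. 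Rearranging gives the $\Gamma_2$-type bound.

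\emph{Backward implication.} Define $\vec P_t:=e^{-t\vec L}$. Exponentiating the intertwinings $\vec L\partial=\partial L$ and $\vec L\mathcal J=\mathcal J\vec L$ yields (a) and (b). For (c), fix $t>0$ and $\xi\in\ell^2(X\times X,b/2)$ and consider the auxiliary function
\begin{equation*}
\phi(s):=e^{-2K(t-s)}P_{t-s}\vec\Gamma(\vec P_s\xi),\qquad s\in[0,t].
\end{equation*}
Using that $L$ commutes with $P_{t-s}$, a direct differentiation gives
\begin{equation*}
\phi'(s)=e^{-2K(t-s)}P_{t-s}\Bigl[2K\vec\Gamma(\vec P_s\xi)+L\vec\Gamma(\vec P_s\xi)-\vec\Gamma(\vec L\vec P_s\xi,\vec P_s\xi)-\vec\Gamma(\vec P_s\xi,\vec L\vec P_s\xi)\Bigr].
\end{equation*}
Applying the $\Gamma_2$-type hypothesis to $\vec P_s\xi$ makes the bracket pointwise non-positive; since $P_{t-s}$ is positivity-preserving (being the heat semigroup of a Markov generator), $\phi'(s)\leq 0$. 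Hence $\phi(t)\leq\phi(0)$, which is exactly condition (c).

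The only delicate point is bookkeeping rather than anything conceptual: keeping the convention $P_t=e^{-tL}$ straight so that the generator enters with the opposite sign, and handling the sesquilinearity of $\vec\Gamma$ correctly when differentiating $\vec\Gamma(\vec P_s\xi,\vec P_s\xi)$. Beyond the product rule, the chain rule, and positivity of $(P_t)$, no further ingredients are required, and in particular no infinite-dimensional functional analysis enters.
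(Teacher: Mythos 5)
Your proposal is correct and is precisely the ``standard semigroup argument'' that the paper's proof invokes by reference to \cite{EF18,LL18}: differentiation at $t=0$ for the forward direction and monotonicity of the interpolant $s\mapsto e^{-2K(t-s)}P_{t-s}\vec\Gamma(\vec P_s\xi)$ for the converse, with all regularity issues trivial in finite dimensions. (You also implicitly and correctly read the condition $\vec L\mathcal J=\mathcal J L$ in the statement as the intended $\vec L\mathcal J=\mathcal J\vec L$.)
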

\begin{proof}
This follows from the standard semigroup arguments (compare with \cite[Theorem 3.1]{EF18} or \cite{LL18}.
\end{proof}

\begin{remark}
If we fix an operator $\vec L$ on $\ell^2(X\times X,b/2)$ that satisfies $\vec L\partial=\partial L$ and $\vec L \cJ=\cJ\vec L$, for every $x\in X$ the computation of the best possible $K\in\IR$ such that $\frac 1 2 (\vec \Gamma(\vec L\xi,\xi)+\vec\Gamma(\xi,\vec L\xi)-L\vec\Gamma(\xi))(x)\geq K\vec\Gamma(\xi)(x)$ for all $\xi\in \ell^2(X\times X,b/2)$ is a quadratic problem in $\xi$, which reduces to the computation of the smallest eigenvalue of a symmetric matrix, just like for Bakry--Émery curvature. For example, one may choose for $\vec L$ the idle Hodge Laplacian plus a multiple of the projection onto $(\ran\partial)^\perp$ (compare with Subsection \ref{subsec:idle_Hodge}). 
In contrast, the computation of the entropic curvature is a non-linear optimization problem, potentially with a multitude of local extrema and singularities which is a reason why it is notoriously hard to find good entropic curvature bounds for graphs with not too much symmetry.
As the intertwining curvature is a general lower bound to the entropic curvature, we expect that its comparably easy computation will be useful to derive new entropic curvature bounds for larger classes of graphs. 
\end{remark}

In some instances, a clever choice of the operator $\vec L$ reduces the computation of an intertwining curvature lower bound to the computation of the Bakry--Émery curvature bound. We will illustrate this with two examples, complete graphs (\Cref{ex:complete_graph}) and graphs whose graph Laplacian is given by a mapping representation (Subsection \ref{subsec:mapping_rep}).

\begin{example}\label{ex:complete_graph}
Let $X$ be a finite set, $m\colon X\to (0,\infty)$ with $\sum_x m(x)=1$ and $b(x,y)=m(x)m(y)$. The graph Laplacian associated with $(X,b,m)$ is given by
\begin{equation*}
Lf(x)=\frac 1 {m(x)}\sum_y b(x,y)(f(x)-f(y))=f(x)-\langle 1,f\rangle_m.
\end{equation*}
If $m$ is the uniform probability measure on $X$, then $L$ is the generator of the simple random walk on the complete graph over $X$.

Noting that all functions of the form $\partial f$ form a subspace of $\ell^2(X\times X,b/2)$, we can write any $\xi\in \ell^2(X\times X,b/2)$ as $\xi=\partial f+\eta$ with $\langle \partial f,\eta\rangle_{b/2}=0$ and define
\begin{equation*}
\vec L\xi=\partial Lf+2K\eta
\end{equation*}
with $K$ the Bakry--Émery curvature lower bound of $(X,b,m)$. Note that we may choose $K\geq \frac 1 2$. In fact, by definition of $L$ we have
\begin{align*}
	\Gamma(f,Lf)&=\Gamma(Lf,f)=\Gamma(f),\\
	L\Gamma(f)&=\Gamma(f)-\langle 1,\Gamma(f)\rangle_m.
\end{align*}
So 
\begin{equation}\label{ineq:BE 1/2}
2\Gamma_2(f)=\Gamma(f,Lf)+\Gamma(Lf,f)-L\Gamma(f)=2\Gamma(f)-\Gamma(f)+\langle1,\Gamma(f)\rangle_m\geq \Gamma(f).
\end{equation}

Then according to \eqref{eq:vec gamma 1} and \eqref{eq:vec gamma 2}, and by definitions of $L$ and $\vec{L}$, 
\begin{align*}
\vec \Gamma(\vec L\xi,\xi)&=\vec\Gamma(\partial Lf,\partial f)+\vec\Gamma(\partial L f,\eta)+2K\vec\Gamma(\eta,\partial f)+2K \vec\Gamma(\eta)\\
&=\Gamma(L f,f)+\vec\Gamma(\partial f,\eta)+2K\vec\Gamma(\eta,\partial f)+2K \vec\Gamma(\eta),\\
\vec \Gamma(\xi,\vec L\xi)&=\Gamma( f,L f)+\vec\Gamma(\eta,\partial f)+2K\vec\Gamma(\partial f,\eta)+2K \vec\Gamma(\eta),\\
L\vec\Gamma(\xi)&=L\vec\Gamma(\partial f,\partial f)+L\vec\Gamma(\partial f,\eta)+L\vec\Gamma(\eta,\partial f)+L\vec\Gamma(\eta)\\
&=L\Gamma(f)+\vec\Gamma(\partial f,\eta)-\langle 1, \Gamma(\partial f,\eta)\rangle_{m}+\vec\Gamma(\eta,\partial f)-\langle 1,\Gamma(\eta,\partial f)\rangle_m+\vec\Gamma(\eta)-\langle 1, \vec\Gamma(\eta)\rangle_m\\
&=L\Gamma(f)+\vec\Gamma(\partial f,\eta)+\vec\Gamma(\eta,\partial f)+\vec\Gamma(\eta)-\langle \partial f,\eta\rangle_{b/2}-\langle \eta,\partial f\rangle_{b/2}-\langle \eta, \eta\rangle_{b/2},\\
&=L\Gamma(f)+\vec\Gamma(\partial f,\eta)+\vec\Gamma(\eta,\partial f)+\vec\Gamma(\eta)-\langle \eta, \eta\rangle_{b/2}\\
\vec\Gamma(\xi)&=\vec\Gamma(\partial f,\partial f)+\vec\Gamma(\partial f,\eta)+\vec\Gamma(\eta,\partial f)+\vec\Gamma(\eta)\\
&=\Gamma(f)+\vec\Gamma(\partial f,\eta)+\vec\Gamma(\eta,\partial f)+\vec\Gamma(\eta).
\end{align*}
Therefore
\begin{align*}
	\vec\Gamma(\vec L\xi,\xi)+\vec\Gamma(\xi,\vec L\xi)-L\vec\Gamma(\xi)-2K \vec\Gamma(\xi)&=2(\Gamma_2(f)-K\Gamma(f))+(2K-1)\vec\Gamma(\eta)+\langle \eta, \eta\rangle_{b/2}\geq 0
\end{align*}
and $(X,b,m)$ has intertwining curvature lower bound $K$.

Let us give a more explicit lower bound for the Bakry--Émery curvature. Recalling \eqref{ineq:BE 1/2}, $(X,b,m)$ has Bakry--Émery lower bound $K$ if and only if 
\begin{equation}\label{ineq:bound of BE complete graph}
	\langle1,\Gamma(f)\rangle_m\geq (2K-1)\Gamma(f)(x),\qquad x\in X.
\end{equation}

We have 
\begin{align*}
\langle1,\Gamma(f)\rangle_m&=\frac 1 2\sum_{y,z\in X}m(y)m(z)\abs{f(y)-f(z)}^2\\
&\geq \frac 1 2\sum_{y\in X}m(y)m(x)\abs{f(y)-f(x)}^2+\frac 1 2\sum_{z\in X}m(x)m(z)\abs{f(x)-f(z)}^2\\
&=2m(x)\Gamma(f)(x)
\end{align*}
for every $x\in X$.

Thus we can take $K=\frac 1 2+\inf_{x\in X} m(x)$. In particular, if $m$ is the uniform probability measure on $\{1,\dots,n\}$, we obtain the intertwining curvature lower bound $K=\frac 1 2 +\frac 1 n$. For the entropic curvature, this bound has been obtained before by Mielke \cite[Example 3.4]{Mie13}.

The constant $K=\frac 1 2+\inf_{x\in X}m(x)$ is the optimal bound in the Bakry--Émery criterion and thus also the optimal intertwining curvature lower bound. Indeed, if $f=\mathds 1_x$, then $\langle1,\Gamma(f)\rangle_m=m(x)(1-m(x))$ and $\Gamma(f)(x)=\frac 1 2(1-m(x))$.
\end{example}

\subsection{Idle Hodge Laplacian and universal lower curvature bound}\label{subsec:idle_Hodge}

We first give a canonical choice of $\vec L$ corresponding to the Hodge Laplacian of the one-dimensional cell complex arising from the graph.
We then show that this choice gives a universal lower curvature bound.

Let $ \xi \in \ell^2(X\times X,b/2)$. We define 
\begin{align*}
\vec L\xi(x,y) := -\sum_z \left(\frac{b(y,z)}{m(y)}\xi(y,z) + \frac{b(x,z)}{m(x)} \xi(z,x) \right)
\end{align*}
For convenience, we will omit the large parentheses in the future.

We observe for $f \in \ell^2(X,m)$,
\[
\partial L f(x,y) = Lf(x) - Lf(y)
\]
and
\[
\vec L \partial f(x,y) = -\sum_z\left[ \frac{b(y,z)}{m(y)}(f(y)-f(z)) + \frac{b(x,z)}{m(x)} (f(z)-f(x)) \right]= Lf(x) - Lf(y).
\]
Moreover,
\[
\cJ \vec L \xi(x,y) = -\vec L \xi(y,x) = \sum_z \frac{b(x,z)}{m(x)}\xi(x,z) + \frac{b(y,z)}{m(y)} \xi(z,y)
\]
and
\[
 \vec L \cJ \xi(x,y) = \sum_z \frac{b(y,z)}{m(y)}\xi(z,y) + \frac{b(x,z)}{m(x)} \xi(x,z)
\]
giving $\cJ \vec L  = \vec L \cJ$.

We now estimate $\vec \Gamma(\xi, \vec L \xi)$. Notice first that to check $\vec \Gamma(\vec P_t \xi)\leq e^{-2Kt}P_t\vec\Gamma(\xi)$,
it is sufficient to check real-valued $\xi$, namely if $\xi= a+ ib$ for real-valued $a,b$, then
\[
\vec \Gamma(\vec P_t \xi)=\vec \Gamma(\vec P_t a) + \vec \Gamma(\vec P_t b)
\] 
as $\vec P_t$ maps real-valued functions to real-valued functions for our specific $\vec L$.

We use the notation $P(x,y) := b(x,y)/m(x)$. For real-valued $\xi$, we estimate 
\begin{align*}
2\vec \Gamma(\xi, \vec L \xi)(x) &= \sum_y P(x,y)\xi(x,y)\vec L \xi(x,y) \\
&= -\sum_{y,z}  P(x,y) \xi(x,y)\left( P(y,z)\xi(y,z) + P(x,z) \xi(z,x)     \right)\\
&= -\sum_{y,z}  P(x,y)P(y,z) \cdot \xi(x,y) \xi(y,z) + P(x,y)P(x,z)\cdot \xi(x,y)\xi(z,x) \\
&\ge - \sum_{y,z}  P(x,y)P(y,z)  \left(\xi(x,y)^2 + \frac 1 4 \xi(y,z)^2 \right) \\
&\qquad+ P(x,y)P(x,z)\left( \frac{\xi(x,y)^2}{P(z,x)}+\frac{1}{4}P(z,x)\xi(z,x)^2\right)\\
&= - \sum_{y,z}  P(x,y)P(y,z)  \xi(x,y)^2 + P(x,y)P(x,z) \frac{\xi(x,y)^2}{P(z,x)} \\
&\qquad+ \frac 1 4 P(x,y)P(y,z)\xi(y,z)^2 +\frac{1}{4}P(x,y)P(x,z)P(z,x)\xi(z,x)^2
\end{align*}
where we use the convention that $P(x,z)/P(z,x)=0$ if $P(x,z)=0=P(z,x)$.
By definition, we have 
\begin{align*}
	-L \vec \Gamma \xi(x) = \frac 1 2\sum_{y,z} P(x,y)P(y,z) \xi(y,z)^2 - P(x,y)P(x,z) \xi(x,z)^2  
\end{align*}
and 
\begin{align*}
	\vec\Gamma \xi (x)=\frac{1}{2}\sum_{y} P(x,y)\xi(x,y)^2.
\end{align*}


Now assume $\deg(x):=\sum_y P(x,y) \leq 1$. Then we have the estimates
\begin{align*}
	\sum_{y,z}  P(x,y)P(y,z)  \xi(x,y)^2
	\le 	\sum_{y}  P(x,y) \xi(x,y)^2
	=2\vec\Gamma \xi (x),
\end{align*}
\begin{align*}
	\sum_{y,z} P(x,y)P(x,z)P(z,x)\xi(z,x)^2
	& \leq \sum_z P(x,z)P(z,x)\xi(z,x)^2 \\
		&\leq \sum_{y,z} P(x,z)P(z,y)\xi(z,y)^2\\
&	=\sum_{y,z} P(x,y)P(y,z)\xi(y,z)^2,
\end{align*}
and 
\begin{align*}
\sum_{y,z}P(x,y)P(x,z) \xi(x,z)^2  
	\le \sum_{z}P(x,z) \xi(x,z)^2  
	=2\vec\Gamma (\xi)(x),
\end{align*}
so that 
\begin{align*}
	2\vec \Gamma(\xi, \vec L \xi)(x) 
	&\ge  - 2\vec\Gamma \xi (x) - \sum_{y,z}P(x,y)P(x,z) \frac{\xi(x,y)^2}{P(z,x)} - \frac{1}{2}\sum_{y,z} P(x,y)P(y,z)\xi(y,z)^2
\end{align*}
and
\begin{align*}
		-L \vec \Gamma \xi(x) 
		\ge  \frac 1 2\sum_{y,z} P(x,y)P(y,z) \xi(y,z)^2 - \vec\Gamma (\xi)(x).
\end{align*}
All combined, we obtain 
\begin{align*}
	2\vec \Gamma(\xi, \vec L \xi)(x)-L \vec \Gamma \xi(x)
	\ge - \sum_{y,z}P(x,y)P(x,z) \frac{\xi(x,y)^2}{P(z,x)} -3\vec\Gamma (\xi)(x).
\end{align*}

We further assume that $P(x,y)\ge 1/\alpha$ for some $\alpha>0$ and for all $x\sim y$. Then 
\begin{align*}
	 \sum_{y,z}P(x,y)P(x,z) \frac{\xi(x,y)^2}{P(z,x)} 
	 \le \alpha	 \sum_{y,z}P(x,y)P(x,z) \xi(x,y)^2 
	 \le  \alpha	 \sum_{y}P(x,y) \xi(x,y)^2 
	 =2\alpha\vec\Gamma (\xi)(x).
\end{align*}
Therefore, we conclude with 
\begin{align*}
	2\vec \Gamma(\xi, \vec L \xi)(x)-L \vec \Gamma \xi(x)
	\ge -(3+2\alpha)\vec\Gamma (\xi)(x),
\end{align*}
and hence, the curvature lower bound $K$ satisfies
\[
K\geq -\frac 3 2 - \alpha.
\]

We thus get the following Theorem.

\begin{theorem}\label{thm:universal_bound_intertwining}
Let $(X,b,m)$ be a graph. Assume $\sum_z P(x,z) \leq 1$ and $P(x,y) \geq P_{\min}>0$ for all $x\sim y$, where $P(x,y)=b(x,y)/m(x)$. Then, the intertwining curvature is lower bounded by
\[
K=-\frac 3 2 - \frac{1}{P_{\min}}.
\]
\end{theorem}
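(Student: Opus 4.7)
The plan is to invoke the $\Gamma_2$-type criterion from \Cref{prop:Gamma_2_intertwining} using the idle Hodge Laplacian $\vec L$ defined at the start of this subsection. Two of the three requirements, namely $\vec L \partial = \partial L$ and $\vec L \cJ = \cJ \vec L$, are already established by direct computation from the explicit formula for $\vec L$; what remains is the pointwise inequality $\vec\Gamma(\vec L\xi,\xi) + \vec\Gamma(\xi,\vec L\xi) - L\vec\Gamma(\xi) \geq 2K\vec\Gamma(\xi)$ with $K = -3/2 - 1/P_{\min}$.

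Since $\vec L$ preserves real-valuedness, I would first reduce to real-valued $\xi$ by splitting into real and imaginary parts. Expanding $2\vec\Gamma(\xi,\vec L\xi)(x)$ in terms of $P(x,y) = b(x,y)/m(x)$ produces two triple sums of cross products, one of the form $P(x,y)P(y,z)\,\xi(x,y)\xi(y,z)$ and the other of the form $P(x,y)P(x,z)\,\xi(x,y)\xi(z,x)$. The first is controlled by the symmetric AM--GM bound $|ab|\leq a^2 + b^2/4$; the second needs a weighted AM--GM with weight $P(z,x)$, producing a term $P(x,y)P(x,z)\xi(x,y)^2/P(z,x)$ together with $\tfrac14 P(x,y)P(x,z)P(z,x)\xi(z,x)^2$. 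The weight is engineered precisely so that after swapping the summation indices $y \leftrightarrow z$, the latter residual is dominated by $\sum_{y,z} P(x,y)P(y,z)\xi(y,z)^2$, which is exactly one of the two terms appearing in $-L\vec\Gamma(\xi)(x)$.

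Using the assumption $\sum_z P(x,z) \leq 1$, I would then bound $\sum_{y,z} P(x,y)P(y,z)\xi(x,y)^2$ and $\sum_z P(x,z)\xi(x,z)^2$ by $2\vec\Gamma(\xi)(x)$ each, which controls the contribution of $-L\vec\Gamma(\xi)$ and absorbs several of the leftover pieces into a single $-3\vec\Gamma(\xi)(x)$ term. The only remaining obstruction is the single problematic sum $\sum_{y,z} P(x,y)P(x,z)\xi(x,y)^2/P(z,x)$, for which I would invoke the uniform lower bound $P(z,x) \geq P_{\min}$ on edges to convert it into $2\alpha\vec\Gamma(\xi)(x)$ with $\alpha = 1/P_{\min}$. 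Combining yields $2\vec\Gamma(\xi,\vec L\xi)(x) - L\vec\Gamma(\xi)(x) \geq -(3+2\alpha)\vec\Gamma(\xi)(x)$, whence the intertwining lower bound $K \geq -3/2 - 1/P_{\min}$.

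The main difficulty lies in selecting the AM--GM weights so that the $\xi(z,x)^2$-type residual is compatible, after an index swap, with what $L\vec\Gamma(\xi)$ can absorb; weighting by exactly $P(z,x)$ rather than by a symmetric factor is what makes the cancellation work, and one has to be careful with the convention $P(x,z)/P(z,x) = 0$ when both vanish in order to avoid $0/0$ issues on non-reciprocal pairs.
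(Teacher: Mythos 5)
Your proposal is correct and follows essentially the same route as the paper: the idle Hodge Laplacian $\vec L$, reduction to real-valued $\xi$, the asymmetric AM--GM bound with weight $P(z,x)$ on the second cross term, the $y\leftrightarrow z$ index swap to absorb the residual into $-L\vec\Gamma(\xi)$, and the final use of $\sum_z P(x,z)\leq 1$ and $P\geq P_{\min}$ to reach $2\vec\Gamma(\xi,\vec L\xi)-L\vec\Gamma(\xi)\geq -(3+2/P_{\min})\vec\Gamma(\xi)$. No substantive differences from the paper's argument.
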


We notice that if a connected graph has only two vertices, then the curvature is positive. Otherwise, $P_{\min} \leq \frac 1 2$, which implies $\frac 3 2 \leq \frac 1 {P_{\min}}$. In either case,
\[
K \geq - \frac 2 {P_{\min}}.
\]

We remark that under the same assumptions, the Bakry--\'Emery curvature is lower bounded by $-1$. However no such estimate without employing $P_{\min}$ is true for the entropic curvature (see \cite{KLMP23}). However, in the same work, a similar bound as ours has been established for the entropic curvature.

For different choices of $\vec L$, the criterion for non-degenerate curvature seems to be

\[
\vec L \1_{x,y} (v,w) = 0 \mbox{ whenever } d(x,v) \geq 2
\]
as $\vec \Gamma (\xi, \vec L \xi)$ would otherwise contain a non-zero term $\xi(x,y)\xi(v,w)$ which cannot be compensated by the squares of $\xi$ coming from $L\vec \Gamma \xi$, as they only contain addends $\xi(v,w)^2$ with $d(x,v) \leq 1$. However, if the above criterion is met, a similar proof as above might go through, however the specific constants should be adapted. 

\begin{remark}
Similar to the case of Forman curvature \cite{For03,JM21},
there are various possible choices for the Hodge Laplacian for the intertwining curvature. Particularly, when maximizing the curvature, there should be triangles and squares included as 2-cells if possible as otherwise the curvature behaves like the curvature of a tree.
Indeed, $\vec L$ can be chosen as Hodge Laplacian for any cell complex, where there is an additional flexibility for $\vec L$ coming from the fact that vector fields are not necessarily anti-symmetric.

\end{remark}

\subsection{Graph Laplacians in terms of mapping representations}\label{subsec:mapping_rep}

We recall the notion of mapping representations following \cite{CPP09,EM12}. Let $(X,b,m)$ a finite weighted graph. A \emph{mapping representation} of the graph Laplacian of $(X,b,m)$ is a pair $(G,c)$ consisting of a set $G$ of maps from $X$ to itself and a function $c\colon X\times G\to [0,\infty)$ such that the following properties hold:
\begin{enumerate}
	\item [(a)] The graph Laplacian $L$ can be written as 
	\begin{equation*}
		L(f)(x)=\sum_{\delta \in G}c(x,\delta)\nabla_\delta(f)(x),\qquad \nabla_{\delta} f(x):=f(x)-f(\delta x).
	\end{equation*}
\item [(b)] For each $\delta \in G$ there exists a unique $\delta^{-1}\in G$ such that 
\begin{equation*}
	\delta^{-1}(\delta(x))=x,\qquad \textnormal{whenever}\qquad c(x,\delta)>0.
\end{equation*}
\item [(c)] For each $F:X\times G\to \real$ we have 
\begin{equation}\label{eq:dbc in mapping representation}
	\sum_{x\in X, \delta \in G}F(x,\delta)c(x,\delta)m(x)=\sum_{x\in X, \delta \in G}F(\delta x,\delta^{-1})c(x,\delta)m(x).
\end{equation}
\end{enumerate}
The condition (c) should be understood as a detailed balance condition. For ease of notation, we write $w(x,\delta)=c(x,\delta)m(x)$.

\begin{lemma}\label{lem:tangent_space_mapping_rep}
Let $(X,b,m)$ be a finite weighted graph and $(G,c)$ a mapping representation for the graph Laplacian of $(X,b,m)$. The linear map 
\begin{equation*}
V\colon \ell^2(X\times X,b/2)\to \ell^2(X\times G,w/2),\,\1_{(x,y)}\mapsto \sum_{\substack{\delta\in G\\\delta(x)=y}}\1_{(x,\delta)}
\end{equation*}
is an isometry.

Moreover
\begin{enumerate}[(a)]
\item $V\partial(f)(x,\delta)=\nabla_\delta f(x)$ for $f\in\ell^2(X,m)$, $x\in X$, $\delta\in G$,
\item $V((f\otimes g)\xi)(x,\delta)=f(x)g(\delta x)V\xi(x,\delta)$ for $f,g\in \ell^2(X,m)$, $\xi\in \ell^2(X\times X,b/2)$, $x\in X$, $\delta\in G$,
\item $V\cJ\xi(x,\delta)=-\overline{V\xi(\delta x,\delta^{-1})}$ for $\xi\in \ell^2(X\times X,b/2)$, $x\in X$, $\delta\in G$,
\item $\vec\Gamma(\xi,V^\ast \eta)(x)=\frac 1 2\sum_{\delta\in G}c(x,\delta)\overline{V\xi(x,\delta)}\eta(x,\delta)$ for $\xi\in \ell^2(X\times X,b/2)$, $\eta\in\ell^2(X\times G,w/2)$, $x\in X$.
\end{enumerate}
\end{lemma}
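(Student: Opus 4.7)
The approach centres on a single bookkeeping identity: for every $\xi\in\ell^2(X\times X,b/2)$, $x\in X$, and $\delta\in G$,
\begin{equation*}
V\xi(x,\delta)=\xi(x,\delta x).
\end{equation*}
This follows directly from the definition of $V$ on the basis $\{\1_{(y,y')}\}$: expanding $\xi=\sum_{y,y'}\xi(y,y')\1_{(y,y')}$ and applying $V$, only the term with $y=x$, $y'=\delta x$ contributes to the coefficient of $\1_{(x,\delta)}$. Once this formula is established, every item in the lemma becomes a one-line substitution.

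First I would verify that $V$ is an isometry. Comparing the two expressions for the graph Laplacian applied to indicator functions gives the basic compatibility
\begin{equation*}
b(x,y)=\sum_{\delta\in G:\,\delta(x)=y}w(x,\delta),
\end{equation*}
from which $\norm{V\1_{(x,y)}}_{w/2}^2=\tfrac{1}{2}\sum_{\delta:\delta(x)=y}w(x,\delta)=b(x,y)/2=\norm{\1_{(x,y)}}_{b/2}^2$. Orthogonality of $V\1_{(x,y)}$ and $V\1_{(x',y')}$ for $(x,y)\ne(x',y')$ is clear once one notes that $V\1_{(x,y)}$ is supported in $\{x\}\times\{\delta:\delta(x)=y\}$: if $x\ne x'$ the supports live over different first coordinates, and if $x=x'$ but $y\ne y'$, no single $\delta$ sends $x$ to both $y$ and $y'$. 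Sesquilinear extension then yields the isometry.

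With the pointwise formula in hand, the four properties unfold mechanically. For (a), $V\partial f(x,\delta)=\partial f(x,\delta x)=f(x)-f(\delta x)=\nabla_\delta f(x)$. For (b), $V((f\otimes g)\xi)(x,\delta)=f(x)g(\delta x)\xi(x,\delta x)=f(x)g(\delta x)V\xi(x,\delta)$. For (c), $V\cJ\xi(x,\delta)=\cJ\xi(x,\delta x)=-\overline{\xi(\delta x,x)}$; invoking axiom~(b) of the mapping representation to write $x=\delta^{-1}(\delta x)$, we also have $V\xi(\delta x,\delta^{-1})=\xi(\delta x,\delta^{-1}(\delta x))=\xi(\delta x,x)$, so the two sides agree. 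For (d), the defining identity $\langle f,\vec\Gamma(\xi,V^\ast\eta)\rangle_m=\langle (f\otimes 1)\xi,V^\ast\eta\rangle_{b/2}=\langle V((f\otimes 1)\xi),\eta\rangle_{w/2}$, combined with (b) and $w(x,\delta)=c(x,\delta)m(x)$, expands into
\begin{equation*}
\sum_{x}\overline{f(x)}m(x)\cdot\tfrac{1}{2}\sum_{\delta}c(x,\delta)\overline{V\xi(x,\delta)}\eta(x,\delta),
\end{equation*}
and matching coefficients of $\overline{f(x)}m(x)$ gives the stated formula.

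The only real subtlety—and what I view as the main thing to get right—is the handling of weights that vanish. Whenever $b(x,y)=0$ the element $\1_{(x,y)}$ is zero in $\ell^2(X\times X,b/2)$, and similarly for $c(x,\delta)=0$ in $\ell^2(X\times G,w/2)$. The identity (c) exploits the existence of $\delta^{-1}$, which by axiom~(b) of the mapping representation is guaranteed precisely where $c(x,\delta)>0$; on the null set both sides vanish in norm, so the equation is to be read in the appropriate $\ell^2$ sense. Once this is understood, no computation beyond the ones above is required.
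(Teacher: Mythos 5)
Your proof is correct and follows essentially the same route as the paper's: the compatibility identity $b(x,y)=\sum_{\delta:\delta(x)=y}c(x,\delta)m(x)$ for the isometry, basis expansion (which you package once as the pointwise formula $V\xi(x,\delta)=\xi(x,\delta x)$ instead of repeating it in each item), and for (d) the identity $\langle f,\vec\Gamma(\xi,V^\ast\eta)\rangle_m=\langle V((f\otimes 1)\xi),\eta\rangle_{w/2}$ followed by matching coefficients. Your extra care about the null sets where $b$ or $c$ vanishes (needed to invoke $\delta^{-1}(\delta x)=x$ in (c)) is a legitimate refinement the paper leaves implicit.
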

\begin{proof}
It follows from property (a) of the definition of mapping representations that
\begin{equation*}
b(x,y)=\sum_{\substack{\delta\in G\\\delta(x)=y}}c(x,\delta)m(x).
\end{equation*}
This readily implies that $V$ is an isometry.

(a) For $f\in \ell^2(X,m)$ we have
\begin{equation*}
V\partial(f)=\sum_{x,y\in X}(f(x)-f(y))V \1_{(x,y)}=\sum_{x,y\in X}\sum_{\substack{\delta\in G\\\delta(x)=y}}(f(x)-f(y))\1_{(x,\delta)}=\sum_{\substack{x\in X\\\delta\in G}}(f(x)-f(\delta x))\1_{(x,\delta)}.
\end{equation*}
(b) For $f,g\in \ell^2(X,m)$ and $\xi\in \ell^2(X\times X,b/2)$ we have
\begin{align*}
V((f\otimes g)\xi)&=\sum_{x,y\in X}f(x)g(y)\xi(x,y)V \1_{(x,y)}\\
&=\sum_{x,y\in X}f(x)g(y)\xi(x,y)\sum_{\substack{\delta\in G\\\delta(x)=y}}\1_{(x,\delta)}\\
&=\sum_{x\in X,\delta\in G}f(x)g(\delta(x))\xi(x,\delta(x)).
\end{align*}
(c) For $\xi\in \ell^2(X\times X,b/2)$ we have
\begin{equation*}
V\cJ\xi=-\sum_{x,y\in X}\overline{\xi(y,x)}V \1_{(x,y)}=-\sum_{x,y\in X}\overline{\xi(y,x)}\sum_{\substack{\delta\in G\\\delta(x)=y}}\1_{(x,\delta)}=-\sum_{x\in X,\delta\in G}\overline{\xi(\delta(x),x)}\1_{(x,\delta)}.
\end{equation*}
Hence $V\cJ\xi(x,\delta)=-\overline{\xi(\delta(x),x)}$. On the other hand,
\begin{align*}
V\xi(\delta(x),\delta^{-1})&=\sum_{y,z\in X}\sum_{\substack{\gamma\in G\\\gamma(y)=z}}\xi(y,z) \1_{\delta(x)}(y) \1_{\delta^{-1}}(\gamma)=\xi(\delta(x),x).
\end{align*}
(d) For $\xi\in \ell^2(X\times X,b/2)$, $\eta\in\ell^2(X\times G,w/2)$ and $\rho\in \ell^2(X,m)$ we have
\begin{align*}
	\langle\rho,\vec \Gamma(\xi,V^\ast \eta)\rangle_m
	=\langle (\rho\otimes 1)\xi,V^\ast\eta\rangle_{b/2}
	=\langle V((\rho\otimes 1)\xi),\eta\rangle_{w/2}.
\end{align*}
An application of (b) yields
\begin{equation*}
\langle V((\rho\otimes 1)\xi),\eta\rangle_{w/2}
=\langle (\rho\otimes 1)V\xi,\eta\rangle_{w/2}=\frac 1 2\sum_{x,\delta}c(x,\delta)\overline{V\xi(x,\delta)}\eta(x,\delta)\overline{\rho(x)}m(x).
\end{equation*}
Taking $\rho=\1_x/m(x)$ yields the claim.
\end{proof}

In particular, (a) implies that $L=\sum_{\delta\in G}\nabla_\delta^\ast\nabla_\delta$. In fact, the statement of the previous lemma can be seen as a special instance of the uniqueness of the first-order differential structure associated with a Markov generator (see also \Cref{prop:fodc} in the noncommutative case).

As a consequence of the previous lemma, for a finite weighted graph admitting a mapping representation $(G,c)$, the gradient estimate $\mathrm{GE}_\Lambda(K,\infty)$ relative to the operator mean $\Lambda$ can be formulated as 
\begin{equation*}
	\sum_{\delta \in G}\langle \nabla_\delta P_t f, \Lambda  (\rho, \rho_\delta)\nabla_\delta P_t f\rangle_{w/2}\le 
e^{-2Kt}\sum_{\delta \in G}\langle \nabla_\delta f, \Lambda (P_t\rho,(P_t\rho)_\delta)\nabla_\delta f\rangle_{w/2},
\end{equation*}
for all $f\in \ell^2(X,m),\rho:X\to [0,\infty),t\ge 0$. Here we use the convention that $\rho_\delta(x):=\rho(\delta x)$. When we choose $\Lambda$ to be the logarithmic mean, it becomes the entropic Ricci curvature lower bound.

By a slight abuse of notation, we write in the following $\cJ\xi(x,\delta)=-\overline{\xi(\delta x,\delta^{-1})}$, $\nabla f(x,\delta)=\nabla_\delta f(x)$ and $\vec \Gamma(\xi)=\vec \Gamma(\xi,\xi)$ with
\begin{equation*}
\vec \Gamma(\xi,\eta)=\frac{1}{2}\sum_{\delta\in G}\overline{\xi(x,\delta)}\eta(x,\delta)c(x,\delta).
\end{equation*}
Then the properties (a), (c) and (d) from the previous lemma can be succinctly expressed as $V\partial=\nabla$, $V\cJ=\cJ V$ and $\vec\Gamma(V\cdot\,,\cdot\,)=\vec \Gamma(\,\cdot\,,V^\ast\cdot\,)$.

With this notation, intertwining curvature bounds can be expressed in terms of mapping representations in a way analogous to \Cref{prop:Gamma_2_intertwining}.
\begin{proposition}\label{prop:gamma2 condition for intertwining}
A finite weighted graph $(X,b,m)$ with mapping representation $(G,c)$ has intertwining curvature lower bound $K$ if and only if there exists a linear operator $\vec L$ on $\ell^2(X\times G,w/2)$ such that $\vec L\nabla=\nabla L$, $\vec L\mathcal J=\mathcal J L$ and
\begin{equation}\label{eq:Gamma_2_mapping_rep}
\frac 1 2 (\vec \Gamma(\vec L\xi,\xi)+\vec\Gamma(\xi,\vec L\xi)-L\vec\Gamma(\xi))\geq K\vec\Gamma(\xi)
\end{equation}
for all $\xi\in \ell^2(X\times G,w/2)$.
\end{proposition}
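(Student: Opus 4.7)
The plan is to prove the equivalence by transporting between $\ell^2(X\times X,b/2)$ and $\ell^2(X\times G,w/2)$ via the isometry $V$ of Lemma~\ref{lem:tangent_space_mapping_rep}, thereby reducing to Proposition~\ref{prop:Gamma_2_intertwining}. The essential tools are $V^*V = I$, $V\partial = \nabla$, and $V\cJ = \cJ V$ (which yields $V^*\cJ = \cJ V^*$ by an antilinear adjoint computation), together with the pointwise identity $\vec\Gamma_{\mathrm{new}}(V\zeta,\eta)(x) = \vec\Gamma_{\mathrm{old}}(\zeta,V^*\eta)(x)$ coming from property (d) of the lemma, where $\vec\Gamma_{\mathrm{old}}$ and $\vec\Gamma_{\mathrm{new}}$ denote the gradient forms on $\ell^2(X\times X,b/2)$ and $\ell^2(X\times G,w/2)$ respectively. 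A direct consequence is the pointwise orthogonality $\vec\Gamma_{\mathrm{new}}(V\zeta,\xi_\perp)(x) = 0$ for all $x$ whenever $V^*\xi_\perp = 0$.

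For the ``$\Leftarrow$'' direction, given $\vec L$ on $\ell^2(X\times G,w/2)$ with the stated properties, I set $\vec L_{\mathrm{old}} := V^*\vec L V$. The identities $\vec L_{\mathrm{old}}\partial = V^*\vec L\nabla = V^*\nabla L = \partial L$ and $\vec L_{\mathrm{old}}\cJ = \cJ\vec L_{\mathrm{old}}$ follow routinely from the tools above, while property (d) (combined with conjugate symmetry of $\vec\Gamma$) converts the $\Gamma_2$-inequality for $\vec L$ evaluated at $V\xi$ into the $\Gamma_2$-inequality for $\vec L_{\mathrm{old}}$ evaluated at $\xi$. Proposition~\ref{prop:Gamma_2_intertwining} then yields the intertwining curvature lower bound $K$.

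For the ``$\Rightarrow$'' direction, the naive candidate $V\vec L_{\mathrm{old}}V^*$ only controls the operator on $\ran V$, while the $\Gamma_2$-criterion demands a pointwise bound on all of $\ell^2(X\times G,w/2)$. I therefore enlarge it to
$$
\vec L := V\vec L_{\mathrm{old}}V^* + \lambda(I - VV^*)
$$
with $\lambda > 0$ to be chosen below. Conditions (i) and (ii) are immediate since $\ran\nabla \subset \ran V$ and $VV^*$ commutes with $\cJ$. For (iii), writing $\xi = V\eta + \xi_\perp$ with $\eta = V^*\xi$, the pointwise orthogonality kills all cross terms, and the left-hand side of the $\Gamma_2$-inequality splits as
$$
\bigl[\vec\Gamma_{\mathrm{old}}(\vec L_{\mathrm{old}}\eta,\eta) + \vec\Gamma_{\mathrm{old}}(\eta,\vec L_{\mathrm{old}}\eta) - L\vec\Gamma_{\mathrm{old}}(\eta)\bigr] + \bigl[2\lambda\vec\Gamma_{\mathrm{new}}(\xi_\perp) - L\vec\Gamma_{\mathrm{new}}(\xi_\perp)\bigr].
$$
The first bracket is $\geq 2K\vec\Gamma_{\mathrm{old}}(\eta)$ by hypothesis on $\vec L_{\mathrm{old}}$, and the second is $\geq 2K\vec\Gamma_{\mathrm{new}}(\xi_\perp)$ pointwise once $\lambda \geq K + \tfrac12\max_x \deg(x)$, thanks to the elementary estimate $(Lu)(x) \leq \deg(x)u(x)$ valid for nonnegative $u$.

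The main obstacle is the tension between the pointwise character of the $\Gamma_2$-condition and the purely $\ell^2$-orthogonal nature of the decomposition $\ell^2(X\times G,w/2) = \ran V \oplus (\ran V)^\perp$; the decisive observation is that property (d) of Lemma~\ref{lem:tangent_space_mapping_rep} upgrades this global orthogonality to a pointwise one at the level of $\vec\Gamma_{\mathrm{new}}$, which is precisely what makes the splitting argument in the ``$\Rightarrow$'' direction work.
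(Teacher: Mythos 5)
Your argument is correct and follows the same overall route as the paper's proof: transport everything through the isometry $V$ of \Cref{lem:tangent_space_mapping_rep}, use part (d) to upgrade the $\ell^2$-orthogonality of $\ran V$ and $(\ran V)^\perp$ to the pointwise identity $\vec\Gamma(V\zeta,\eta)=\vec\Gamma(\zeta,V^\ast\eta)$ so that cross terms vanish, and reduce to \Cref{prop:Gamma_2_intertwining}. Your backward direction, with $\vec L_{\mathrm{old}}=V^\ast\vec L V$, is identical to the paper's. The one genuine difference is in the forward direction: the paper takes $\vec L_c=V\vec L_{\mathrm{old}}V^\ast+K(1-VV^\ast)$ and asserts that the bound on $(\ran V)^\perp$ is immediate, but for $\xi_\perp\in(\ran V)^\perp$ that choice yields $2K\vec\Gamma(\xi_\perp)-L\vec\Gamma(\xi_\perp)\geq 2K\vec\Gamma(\xi_\perp)$, i.e.\ it implicitly requires $L\vec\Gamma(\xi_\perp)\leq 0$, which is not sign-definite in general (if two maps $\delta_1\neq\delta_2$ with positive rates send $x$ to the same vertex, one can produce $\xi_\perp\perp\ran V$ with $\vec\Gamma(\xi_\perp)$ supported at $x$, whence $L\vec\Gamma(\xi_\perp)(x)=\deg(x)\vec\Gamma(\xi_\perp)(x)>0$). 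Your replacement of $K$ by $\lambda\geq K+\tfrac12\max_x\deg(x)$ on $(\ran V)^\perp$, justified by the elementary estimate $Lu(x)\leq\deg(x)u(x)$ for $u\geq 0$, closes exactly this gap, and is admissible since the proposition only asks for the existence of \emph{some} $\vec L$. So your proof is not merely correct; at this step it is more careful than the paper's own argument.
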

\begin{proof}
First assume there exists an operator $\vec L$ on $\ell^2(X\times G,w/2)$ with the properties specified in the proposition and let $\vec L_b=V^\ast \vec L V$. By \Cref{lem:tangent_space_mapping_rep} (a), (c) we have $\vec L_b\partial=\partial L$ and $\vec L_b\cJ=\cJ\vec L_b$.

Moreover, by \Cref{lem:tangent_space_mapping_rep} (d), if $\xi\in \ell^2(X\times X,b/2)$, then
\begin{align*}
\vec\Gamma(\vec L_b\xi,\xi)+\vec\Gamma(\xi,\vec L_b\xi)-L\vec \Gamma(\xi)&=\vec \Gamma(V^\ast \vec L V\xi,\xi)+\vec \Gamma(\xi,V^\ast\vec L V\xi)-L\vec\Gamma(V^\ast V\xi,\xi)\\
&=\vec \Gamma(\vec L V\xi,V\xi)+\vec\Gamma(V\xi,\vec L V\xi)-L\vec\Gamma(V\xi)\\
&\geq 2K\vec\Gamma(V\xi)\\
&=2K\vec\Gamma(\xi).
\end{align*} 
Thus, by \Cref{prop:Gamma_2_intertwining}, $(X,b,m)$ has intertwining curvature bounded below by $K$.

Conversely assume that $(X,b,m)$ has intertwining curvature bounded below by $K$ and let $\vec L$ be an operator on $\ell^2(X\times X,b/2)$ that satisfies the conditions from \Cref{prop:Gamma_2_intertwining}. Let $\vec L_c=V\vec L V^\ast+K(1-VV^\ast)$. Again, it follows from \Cref{lem:tangent_space_mapping_rep} that $\vec L_c\nabla=\nabla L$ and $\cJ\vec L_c=\vec L_c \cJ$.

By \Cref{lem:tangent_space_mapping_rep} (d), $\vec \Gamma(VV^\ast\xi,(1-VV^\ast)\eta)=0$ for all $\xi,\eta\in \ell^2(X\times G,w/2)$. Thus it suffices to check \eqref{eq:Gamma_2_mapping_rep} for $\xi\in \operatorname{ran}V$ and $\xi\in (\operatorname{ran}V)^\perp$ separately. For $\xi\in \operatorname{ran}V$ it follows directly from the assumptions on $\vec L$, while the bound for $\xi\in (\operatorname{ran}V)^\perp$ is immediate from the definition of $\vec L_c$.
\end{proof}

Now we are ready to state the main result in this part. 

\begin{theorem}\label{thm:mapping}
Let $(X,b,m)$ be a finite weighted graph with mapping representation $(G,c)$. Consider the following conditions
\begin{enumerate}
\item [(a)] $\delta\circ\gamma=\gamma\circ\delta,$ for all $\delta, \gamma\in G$.
\item [(b)] $c(\delta x,\gamma)=c(x,\gamma)$, for all $x\in X$ and $\delta, \gamma\in G$.
\item [(c)] $\delta\circ \delta =\textnormal{id}$, for all $\delta \in G$.
\end{enumerate}
Then 
\begin{enumerate}
\item [(i)] if conditions (a) and (b) are satisfied, then we have the intertwining curvature bounded from below by $0$;
\item [(ii)] if conditions (a), (b) and (c) are satisfied, then we have the intertwining curvature bounded from below by $K$ with 
\begin{equation*}
K=2\min\left\{c(x,\delta): x\in X, \delta \in G \textnormal{ such that } c(x,\delta)>0\right\}.
\end{equation*}
\end{enumerate}
\end{theorem}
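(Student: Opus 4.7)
The plan is to invoke \Cref{prop:gamma2 condition for intertwining}: in each setting I need to produce a linear operator $\vec L$ on $\ell^2(X\times G,w/2)$ that satisfies $\vec L\nabla=\nabla L$, $\vec L\cJ=\cJ\vec L$, and a pointwise $\Gamma_2$-inequality with the claimed constant $K$. For part (i), under (a) and (b), I would take the \emph{diagonal lift}
\begin{equation*}
\vec L_0\xi(x,\delta):=\sum_{\gamma\in G}c(x,\gamma)\bigl[\xi(x,\delta)-\xi(\gamma x,\delta)\bigr],
\end{equation*}
which acts as $L$ in the $x$-variable while keeping $\delta$ frozen. The intertwining identities $\vec L_0\nabla=\nabla L$ and $\vec L_0\cJ=\cJ\vec L_0$ are direct computations: (a) allows the interchange of $\gamma$ and $\delta$ in compositions, and (b) allows the shift of weights $c(\gamma x,\delta)=c(x,\delta)$. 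Expanding the $\Gamma_2$-quantity, a straightforward calculation yields
\begin{equation*}
\vec\Gamma(\vec L_0\xi,\xi)(x)+\vec\Gamma(\xi,\vec L_0\xi)(x)-L\vec\Gamma(\xi)(x)=\tfrac 1 2\sum_{\gamma,\delta}c(x,\gamma)c(x,\delta)\,\lvert\xi(x,\delta)-\xi(\gamma x,\delta)\rvert^2\ge 0,
\end{equation*}
which proves (i) with $K=0$.

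For part (ii), the additional condition (c) is used to correct $\vec L_0$ by a fibrewise term:
\begin{equation*}
\vec L\xi:=\vec L_0\xi+M\xi,\qquad M\xi(x,\delta):=2c(x,\delta)\bigl[\xi(x,\delta)+\xi(\delta x,\delta)\bigr].
\end{equation*}
Under (c) one has $\nabla f(\delta x,\delta)=f(\delta x)-f(\delta^2 x)=-\nabla f(x,\delta)$, hence $M\nabla f=0$, so the intertwining $\vec L\nabla=\nabla L$ is preserved; the commutation $M\cJ=\cJ M$ follows because under (c) we have $\cJ\xi(x,\delta)=-\overline{\xi(\delta x,\delta)}$ and (b) yields $c(\delta x,\delta)=c(x,\delta)$. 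Since $L\vec\Gamma(\xi)$ is independent of the choice of $\vec L$, the new $\Gamma_2$-quantity equals the previous one plus $B:=\vec\Gamma(M\xi,\xi)+\vec\Gamma(\xi,M\xi)$, and a short calculation gives $B=2\sum_\delta c(x,\delta)^2[\lvert\xi(x,\delta)\rvert^2+\operatorname{Re}(\overline{\xi(\delta x,\delta)}\xi(x,\delta))]$.

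The heart of the argument is then the algebraic identity allowing one to combine $B$ with the diagonal $\gamma=\delta$ part of the previous sum. Writing $u=\xi(x,\delta)$ and $v=\xi(\delta x,\delta)$, the $\delta$-summand of this combination works out to
\begin{equation*}
c(x,\delta)^2\bigl[\tfrac 5 2\lvert u\rvert^2+\tfrac 1 2\lvert v\rvert^2+\operatorname{Re}(\bar v u)\bigr]=c(x,\delta)^2\bigl[2\lvert u\rvert^2+\tfrac 1 2\lvert u+v\rvert^2\bigr]\ge 2c(x,\delta)^2\lvert u\rvert^2,
\end{equation*}
while the off-diagonal $\gamma\ne\delta$ contribution of the previous sum is already non-negative. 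Using $c(x,\delta)\ge\min\{c(y,\gamma):c(y,\gamma)>0\}$ once on the prefactor, the right-hand side dominates $2K\vec\Gamma(\xi)(x)$ with $K=2\min c$, finishing the proof. The main obstacle is pinning down the correction $M$: it must simultaneously annihilate $\operatorname{ran}\nabla$ (which forces the specific symmetric combination $\xi(x,\delta)+\xi(\delta x,\delta)$ and relies on (c)), commute with $\cJ$ (which determines the scalar prefactor via (b)), and yet produce a quadratic form that merges with the $\gamma=\delta$ term of $\vec L_0$ into a perfect square; the coefficient $2c(x,\delta)$ is tuned precisely to accomplish this completion of the square and to leave a clean $c(x,\delta)^2\lvert u\rvert^2$ remainder out of which the factor $2\min c$ can be extracted.
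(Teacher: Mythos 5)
Your proposal is correct and follows essentially the same route as the paper: the same criterion (\Cref{prop:gamma2 condition for intertwining}), the identical operator $\vec L_0$ for part (i), and for part (ii) a correction by a multiple of the symmetric part $\xi(x,\delta)+\xi(\delta x,\delta)$, which vanishes on $\operatorname{ran}\nabla$ by (c) and commutes with $\cJ$ by (b). The only (harmless) difference is that you weight this correction by the local coefficient $2c(x,\delta)$ and finish by completing a square in the diagonal $\gamma=\delta$ terms, whereas the paper uses the constant weight $K=2\min c$ and estimates via the decomposition $\xi=\eta+\zeta$ into antisymmetric and symmetric parts; both variants satisfy the required intertwining identities and yield the same curvature bound.
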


\begin{remark}
In particular, this implies the corresponding entropic Ricci curvature lower bounds obtained in \cite[Proposition 5.4]{EM12}.
\end{remark}

\begin{proof}
We prove (i) first. For this we define the linear operator $\vec L$ on $\ell^2(X\times G,w/2)$ by
\begin{equation*}
\vec L \xi (x,\delta)
=\sum_{\gamma\in G}c(x,\gamma)\left(\xi(x,\delta)-\xi(\gamma x,\delta)\right).
\end{equation*}
Then a direction computation shows 
\begin{equation*}
\vec L \nabla f(x,\delta)
= \sum_{\gamma\in G}c(x,\gamma)\left(f(x)-f(\delta x)-f(\gamma x)+f(\delta \gamma x)\right).
\end{equation*}
and 
\begin{equation*}
 \nabla L f(x,\delta)
=Lf(x)-Lf(\delta x)
=\sum_{\gamma\in G}(c(x,y)(f(x)-f(\gamma x))-c(\delta x,\gamma)(f(\delta x)-f(\gamma\delta x)))
\end{equation*}
By conditions (a) and (b), we have 
\begin{equation*}
\nabla L f(x,\delta)
=\sum_{\gamma\in G}(c(x,y)(f(x)-f(\gamma x)-f(\delta x)+f(\delta\gamma x))
=\vec L \nabla f(x,\delta).
\end{equation*}
So $\nabla L =\vec L \nabla$. 

By definition, we have
\begin{align*}
\vec L \cJ \xi (x,\delta)
&=\sum_{\gamma\in G}c(x,\gamma)(\cJ \xi(x,\delta)-\cJ \xi(\gamma x,\delta))=-\sum_{\gamma\in G}c(x,\gamma)(\overline{\xi (\delta x,\delta^{-1})}-\overline{\xi(\delta\gamma x,\delta^{-1})}),
\end{align*}
and 
\begin{align*}
 \cJ \vec L\xi (x,\delta)
=-\overline{\vec L \xi(\delta x,\delta^{-1})}=-\sum_{\gamma\in G} c(\delta x,\gamma)(\overline{\xi(\delta x,\delta^{-1})}-\overline{\xi(\gamma \delta x, \delta^{-1})}).
\end{align*}
Again, by conditions (a) and (b), we see that $\vec L \cJ= \cJ \vec L$.

According to Proposition \ref{prop:gamma2 condition for intertwining}, to finish the proof of (i), it remains to verify that
\begin{equation*}
\vec \Gamma(\vec L\xi,\xi)+\vec\Gamma(\xi,\vec L\xi)-L\vec\Gamma(\xi)\ge 0.
\end{equation*}
For this, one calculates 
\begin{align*}
\vec \Gamma(\vec L\xi,\xi)(x)
=\sum_{\gamma\in G}c(x,\gamma)\vec \Gamma(\xi)(x)-\frac{1}{2}\sum_{\delta,\gamma\in G}\overline{\xi(\gamma x,\delta)}\xi (x,\delta)c(x,\gamma)c(x,\delta)
\end{align*}
and
\begin{align*}
\vec \Gamma(\xi,\vec L\xi)(x)
=\sum_{\gamma\in G}c(x,\gamma)\vec \Gamma(\xi)(x)-\frac{1}{2}\sum_{\delta,\gamma\in G}\xi(\gamma x,\delta)\overline{\xi (x,\delta)}c(x,\gamma)c(x,\delta).
\end{align*}
Using condition (b), we find 
\begin{align*}
L \vec \Gamma(\xi)(x)
&=\sum_{\gamma\in G}c(x,\gamma)(\vec \Gamma(\xi)(x)-\vec \Gamma(\xi)(\gamma x))\\
&=\sum_{\gamma\in G}c(x,\gamma)\vec \Gamma(\xi)(x)-\frac{1}{2}\sum_{\delta,\gamma\in G}|\xi(\gamma x,\delta)|^2 c(x,\gamma)c(\gamma x,\delta)\\
&=\sum_{\gamma\in G}c(x,\gamma)\vec \Gamma(\xi)(x)-\frac{1}{2}\sum_{\delta,\gamma\in G}|\xi(\gamma x,\delta)|^2 c(x,\gamma)c(x,\delta).
\end{align*}
Therefore, 
\begin{align*}
&\;\vec \Gamma(\vec L\xi,\xi)(x)+\vec\Gamma(\xi,\vec L\xi)(x)-L\vec\Gamma(\xi)(x)\\
&\quad=\sum_{\gamma\in G}c(x,\gamma)\vec \Gamma(\xi)(x)+\frac{1}{2}\sum_{\delta,\gamma\in G}\left[|\xi(\gamma x,\delta)|^2-\overline{\xi(\gamma x,\delta)}\xi (x,\delta)-\xi(\gamma x,\delta)\overline{\xi (x,\delta)}\right]c(x,\gamma)c(x,\delta)\\
&\quad=\frac{1}{2}\sum_{\delta,\gamma\in G}\left[|\xi(x,\delta)|^2+|\xi(\gamma x,\delta)|^2-\overline{\xi(\gamma x,\delta)}\xi (x,\delta)-\xi(\gamma x,\delta)\overline{\xi (x,\delta)}\right]c(x,\gamma)c(x,\delta)\\
&\quad=\frac{1}{2}\sum_{\delta,\gamma\in G}|\xi(\gamma x,\delta)-\xi (x,\delta)|^2c(x,\gamma)c(x,\delta)\\
&\quad\ge 0,
\end{align*}
and we finish the proof of (i).

Now let us show (ii). For $\xi\in \ell^2(X\times G,w/2)$ let $\eta(x,\delta)=\frac 1 2(\xi(x,\delta)-\xi(\delta x,\delta))$ and $\zeta(x,\delta)=\frac 1 2(\xi(x,\delta)+\xi(\delta x,\delta))$ so that $\xi=\eta+\zeta$. Define
\begin{equation*}
\vec L_K \xi (x,\delta)
=\vec L\xi(x,\delta)+2K\zeta(x,\delta)
\end{equation*}
with $K$ as in the statement of the theorem and $\vec L$ as in (i).

If $\xi=\nabla f$ for some $f\in \ell^2(X,m)$, then $\zeta=0$ by (c), hence $\nabla L=\vec L_K\nabla$ follows from the intertwining identity $\nabla L=\vec L\nabla$ in (i). The identity $\vec L_K\cJ =\cJ \vec L_K$ follows from the same calculation as in the case (i) without assuming condition (c).

By Proposition \ref{prop:gamma2 condition for intertwining}, it reduces to proving 
\begin{equation*}
\vec \Gamma(\vec L_K\xi,\xi)+\vec\Gamma(\xi,\vec L_K\xi)-L\vec\Gamma(\xi)\geq 2K\vec\Gamma(\xi).
\end{equation*}

Using the computations from (i) we obtain
\begin{align*}
&\vec \Gamma(\vec L_K\xi,\xi)(x)+\vec\Gamma(\xi,\vec L_K\xi)(x)-L\vec\Gamma(\xi)(x)\\
&\quad= \Gamma(\vec L\xi,\xi)(x)+\vec\Gamma(\xi,\vec L\xi)(x)-L\vec\Gamma(\xi)(x)+2K\vec\Gamma(\zeta,\xi)+2K\vec\Gamma(\xi,\zeta)\\
&\quad=\frac{1}{2}\sum_{\delta,\gamma\in G}|\xi(\gamma x,\delta)-\xi (x,\delta)|^2c(x,\gamma)c(x,\delta)+2K\vec\Gamma(\zeta,\xi)(x)+2K\vec\Gamma(\xi,\zeta)(x)\\
&\quad\geq \frac 1 2\sum_{\delta\in G}\abs{\xi(\delta x,\delta)-\xi(x,\delta)}^2c(x,\delta)^2+2K\vec\Gamma(\zeta,\xi)(x)+2K\vec\Gamma(\xi,\zeta)(x)\\
&\quad=2\sum_{\delta\in G}\abs{\eta(x,\delta)}^2c(x,\delta)^2+2K\vec\Gamma(\zeta,\xi)(x)+2K\vec\Gamma(\xi,\zeta)(x)\\
&\quad\geq 2K(\vec\Gamma(\eta)(x)+\vec\Gamma(\zeta,\zeta)(x)+\vec\Gamma(\eta,\zeta)(x)+2\vec\Gamma(\zeta)(x))\\
&\quad\geq 2K\vec\Gamma(\eta+\zeta)(x)\\
&\quad=2K\vec\Gamma(\xi)(x).\qedhere
\end{align*}
\end{proof}

\section{Curvature conditions for quantum Markov semigroups on matrix algebras}\label{sec:QMS}

In this section we turn to quantum Markov semigroups on matrix algebras. The study of curvature in this context is more recent, with a significant uprise in activity after the construction of a noncommutative version of the Wasserstein distance by Carlen--Maas \cite{CM17} and Mittnenzweig--Mielke \cite{MM17} that made it possible to extend the Lott--Sturm--Villani approach to Ricci curvature lower bounds to the noncommutative setting. These methods have proven particularly useful in establishing exponential decay to equilibrium in relative entropy for open quantum systems (see also \cite{DR20,BGJ22,BGJ23}). In a different context, Bakry--Émery curvature bounds for quantum Markov semigroups (in infinite dimensions) have been studied with applications to noncommutative harmonic analysis \cite{JM10,JZ15}.

As in the case of graphs, we introduce a notion of intertwining curvature and show that it is stronger than both Bakry--Émery and entropic curvature (\Cref{thm:grad_est_from_intertwining_QMS}). We prove intertwining curvature lower bounds for dephasing semigroups in terms of the Pimsner--Popa index of the conditional expectation (\Cref{prop:intertwining_curv_dephasing}), which are sharp in the case of depolarizing semigroups. Furthermore we prove that if a family of quantum Markov semigroups has a uniform intertwining curvature lower bounds and their generators have commuting jump operators, then the product of these semigroups has the same intertwining curvature bound (\Cref{thm:intertwining_QMS_very_commuting}).

In the last subsection, we focus on depolarizing semigroups and show that the entropic curvature lower bound obtained from intertwining curvature can be improved (\Cref{thm:GE_depolarizing}). An an application, we conclude that the optimal modified logarithmic Sobolev inequality for the depolarizing semigroup on qubits can be obtained from an entropic curvature bound.

Let us briefly introduce the setup and fix notation. Let $M_n(\IC)$ denote the space of all complex $n\times n$ matrices. We write $\tau$ for the normalized trace on $M_n(\IC)$, that is, $\tau(\A)=\frac 1 n\sum_{j=1}^n \A_{jj}$. We use the Löwner order on $M_n(\IC)$, that is, $A\geq B$ if $A-B$ is positive, i.e. self-adjoint with spectrum contained in $[0,\infty)$. We use $\un$ to denote the identity matrices. 

A linear map $\Phi$ on $M_n(\IC)$ is called \emph{unital} if $\Phi(\un)=\un$ and \emph{completely positive} if 
\begin{equation*}
\sum_{j,k=1}^m B_j^\ast \Phi(A_j^\ast A_k)B_k\geq 0
\end{equation*}
for all $A_1,\dots,A_m,B_1,\dots,B_m\in M_n(\IC)$ and for all $m\ge 1$.

A quantum Markov semigroup on $M_n(\IC)$ is a strongly continuous semigroup of unital completely positive maps on $M_n(\IC)$. As a strongly continuous semigroup, a quantum Markov semigroup $(P_t)$ can be expressed as $P_t=e^{-t\L}$, where the operator $\L$ on $M_n(\IC)$ is uniquely determined by $(P_t)$ and called the \emph{generator of $(P_t)$}. As in the commutative case, we define the (iterated) carré du champ operator associated with $\L$ by
\begin{align*}
\Gamma(A,B)&=\frac 1 2\left(\L(A)^\ast B+A^\ast \L(A)-\L(A^\ast B)\right),\\
\Gamma_2(A,B)&=\frac 1 2\left(\Gamma(\L(A),B)+\Gamma(A,\L(B))-\L\Gamma(A,B)\right)
\end{align*}
for $A,B\in M_n(\IC)$. As usual, we write $\Gamma(A)$ for $\Gamma(A,A)$ and $\Gamma_2(A)$ for $\Gamma_2(A,A)$.

Given a linear map $T:M_n(\com)\to M_n(\com)$, we denote by $T^\dagger$ its adjoint with respect to the inner product induced by $\tau$, that is,
\begin{equation*}
\tau(T(A)B)=\tau(A T^\dagger(B))
\end{equation*}
for all $\A,\B\in M_n(\IC)$. 

In the following, we fix a positive definite matrix $\sigma\in M_n(\com)$ with $\tau(\sigma)=1$. We denote by $T^{\dagger, \sigma}$ the adjoint of $T$ with respect to the GNS inner product induced by $\sigma$, that is,
\begin{equation*}
\tau(T(A)B\sigma)=\tau(A T^{\dagger,\sigma}(B)\sigma)
\end{equation*}
for all $\A,\B\in M_n(\IC)$. A quantum Markov semigroup $(P_t)$ is called \emph{GNS-symmetric with respect to $\sigma$} if $P_t^{\dagger,\sigma}=P_t$ for all $t\ge 0$.

 Alicki \cite[Theorem 3]{Ali76} showed that a linear operator $\L$ on $M_n(\IC)$ is the generator of a quantum Markov semigroup on $M_n(\IC)$ that is GNS-symmetric with respect to $\sigma$ if and only if there exists a natural number $d$, non-zero operators $\V_j\in M_n(\IC)$ and numbers $\omega_j\in\IR$, $j\in \{1,\dots,d\}$ such that
\begin{itemize}
\item [(a)] $\tau(\V_j^\ast \V_k)=0$ for $j\neq k$,
\item [(b)] $\tau(\V_j)=0$ for $1\leq j\leq d$,
\item [(c)] for every $j\in \{1,\dots,d\}$ there exists $j^\ast\in \{1,\dots,d\}$ such that $\V_j^\ast=\V_{j^\ast}$,
\item [(d)] $\sigma \V_j \sigma^{-1}=e^{-\omega_j}\V_j$,
\item [(e)] the operator $\L$ acts as
\begin{equation*}
\L(\A)=\sum_{j=1}^d e^{-\omega_j/2}(\V_j^\ast[\V_j,\A]-[\V_j^\ast,\A]\V_j)
\end{equation*}
for $\A\in M_n(\IC)$.
\end{itemize}
Note that due to the orthogonality relation (a), the index $j^\ast$ in (c) is unique. 

In the case when $\sigma=\un$, a GNS-symmetric quantum Markov semigroup is also called \emph{tracially symmetric}. In this case, the dual semigroup $(P_t^\dagger)$ coincides with $(P_t)$. The generators of tracially symmetric quantum Markov semigroups are exactly the linear operators $\L$ on $M_n(\IC)$ that can be expressed as
\begin{equation*}
\L(\A)=\sum_{j=1}^d (\V_j^2 \A+\A\V_j^2-2\V_j \A \V_j)
\end{equation*}
with self-adjoint $\V_j\in M_n(\IC)$. 

While the numbers $\omega_j$'s are uniquely determined by $(P_t)$, the operators $\V_j$ are not. For this reason, it is not immediately clear if the gradient estimate for quantum Markov semigroups originally formulated by Carlen and Maas \cite{CM17} in terms of the operators $\V_j$ depends on this choice.

For this reason, we give an abstract formulation that relies on another representation result for the generators of GNS-symmetric quantum Markov semigroups by the second-named author \cite{Wir22a}. This result relies on the notions of \emph{Hilbert $C^\ast$-bimodules} and derivations, which we briefly recall.

A finite-dimensional Hilbert $C^\ast$-bimodule over $M_n(\IC)$ is a finite-dimensional bimodule $F$ over $M_n(\IC)$ together with a sesquilinear map $(\,\cdot\mid\cdot\,)\colon F\times F\to M_n(\IC)$ such that
\begin{itemize}
\item [(a)] $(\xi\vert \A\eta \B)=(\A^\ast\xi\vert\eta)\B$ for all $\xi,\eta\in F$, $\A,\B\in M_n(\IC)$,
\item [(b)] $(\xi\vert\xi)\geq 0$ for all $\xi\in F$,
\item [(c)] $(\xi\vert \xi)=0$ if and only if $\xi=0$.
\end{itemize}

By the representation theory of $M_n(\IC)$, for every finite-dimensional Hilbert $C^\ast$-bimodule $F$ over $M_n(\IC)$ there exists a natural number $d$ and a bijective bimodule map $\Phi\colon F\to M_n(\IC)^d$ such that $(\xi\vert \eta)=\frac 1 2\sum_{j=1}^d \Phi (\xi)_j^\ast \Phi(\eta)_j$.

If $F$ is a bimodule over $M_n(\IC)$, a linear map $\partial\colon M_n(\IC)\to F$ is called a \emph{derivation} if it satisfies the Leibniz rule $\partial(AB)=A\partial(B)+\partial(A)B$ for all $A,B\in M_n(\IC)$.

\begin{proposition}\label{prop:fodc}
If $\mathcal L$ generates a quantum Markov semigroup on $M_n(\IC)$ that is GNS-symmetric with respect to $\sigma$, then there exists a finite-dimensional Hilbert $C^\ast$-bimodule $F$ over $M_n(\IC)$, a strongly continuous group of isometries $(V_t)$ on $F$, an anti-linear operator $\cJ\colon F\to F$ and a derivation $\partial\colon M_n(\IC)\to F$ such that
\begin{enumerate}[(a)]
\item $V_t(\A\xi \B)=\sigma^{it}\A\sigma^{-it}(V_t\xi)\sigma^{it}\B\sigma^{-it}$ for all $\A,\B\in M_n(\IC)$, $\xi\in F$,
\item $\cJ(\A\xi \B)=\sigma^{1/2}\B^\ast\sigma^{-1/2}(\cJ \xi)\sigma^{1/2}\A^\ast\sigma^{-1/2}$ for all $\A,\B\in M_n(\IC)$, $\xi\in F$,
\item $\tau((\cJ\xi\vert\cJ\eta)\sigma)=\tau((\eta\vert\xi)\sigma)$ for all $\xi,\eta\in F$,
\item $\cJ V_t=V_t\cJ$ for all $t\in \IR$,
\item $\partial(\sigma^{it}\A\sigma^{-it})=V_t\partial(\A)$ for all $\A\in M_n(\IC)$, $t\in\IR$,
\item $\partial(\sigma^{1/2}\A^\ast \sigma^{-1/2})=\cJ\partial(\A)$ for all $\A\in M_n(\IC)$,
\item $F=\mathrm{lin}\{\partial(\A)\B\mid \A,\B\in M_n(\IC)\}$,
\item $\Gamma(\A,\B)=(\partial(\A)\vert\partial(\B))$ for all $\A,\B\in M_n(\IC)$.
\end{enumerate}
Moreover, if $(F^\prime,(V_t^\prime),\cJ^\prime,\partial^\prime)$ satisfies (a)--(h), then there exists a unique bimodule map $\Phi\colon F\to F^\prime$ such that $(\Phi(\xi)\vert\Phi(\eta))=(\xi\vert\eta)$ for all $\xi,\eta\in F$ and $\Phi\circ V_t=V_t^\prime\circ \Phi$, $\Phi\circ \cJ=\cJ^\prime\circ\Phi$, $\Phi\circ \partial=\partial^\prime$.
\end{proposition}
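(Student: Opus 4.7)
The plan has two pieces: an explicit construction of $(F, (V_t), \cJ, \partial)$ from Alicki's representation of $\L$, and a uniqueness argument that bootstraps from the generating property (g) combined with the inner-product identity (h).

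For existence, start from Alicki's data $V_1, \dots, V_d$ and $\omega_1, \dots, \omega_d$ satisfying the properties listed before the proposition. The index pairing $V_j^\ast = V_{j^\ast}$ combined with $\sigma V_j \sigma^{-1} = e^{-\omega_j} V_j$ forces $\omega_{j^\ast} = -\omega_j$. Take $F := M_n(\IC)^d$ with the standard bimodule action $(A\xi B)_j := A \xi_j B$ and $C^\ast$-valued inner product $(\xi\vert\eta) := \tfrac{1}{2}\sum_j \xi_j^\ast \eta_j$. Set
\begin{equation*}
(V_t \xi)_j := e^{it\omega_j} \sigma^{it} \xi_j \sigma^{-it}, \qquad (\cJ\xi)_j := \sigma^{1/2}\xi_{j^\ast}^\ast \sigma^{-1/2}, \qquad \partial(A)_j := e^{-\omega_j/4}[V_j, A],
\end{equation*}
possibly with signs tuned to match conventions. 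Then verify (a)--(h) by direct computation: (a) is immediate from $\sigma^{it}(\cdot)\sigma^{-it}$ being an automorphism; (d) rests on $\omega_{j^\ast} = -\omega_j$; (e) uses that the $e^{it\omega_j}$ factor in $V_t$ cancels the twist produced by $\sigma^{it} V_j \sigma^{-it} = e^{-it\omega_j} V_j$; (g) uses that each $V_j$ is nonzero, so $[V_j, M_n(\IC)] M_n(\IC) = M_n(\IC)$; and (h) reduces to the algebraic identity obtained by expanding Alicki's formula for $\L$ and recognizing $\Gamma(A,B) = \tfrac{1}{2}\sum_j e^{-\omega_j/2}[V_j, A]^\ast [V_j, B]$.

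For uniqueness, given another tuple $(F', (V_t'), \cJ', \partial')$ satisfying (a)--(h), define $\Phi$ on the generating set by $\Phi(\partial(A) B) := \partial'(A) B$. Property (h) applied on both sides yields
\begin{equation*}
(\partial(A) B \vert \partial(C) D) = B^\ast \Gamma(A, C) D = (\partial'(A) B \vert \partial'(C) D)',
\end{equation*}
so $\Phi$ is well-defined and isometric for the $C^\ast$-valued inner products. By (g) it extends to a bimodule isomorphism between $F$ and $F'$. The intertwining identities $\Phi V_t = V_t' \Phi$, $\Phi \cJ = \cJ' \Phi$ and $\Phi \partial = \partial'$ then follow by evaluating both sides on the generating set using (e) and (f) and propagating through the right factor $B$ via the Leibniz rule; uniqueness of $\Phi$ is forced by (g).

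The hardest part will be the bookkeeping of signs and exponential prefactors in the definitions of $V_t$, $\cJ$ and $\partial$ so that all six compatibility relations (a)--(f) hold simultaneously while (h) still reproduces Alicki's formula for $\L$. The modular structure $\sigma V_j \sigma^{-1} = e^{-\omega_j} V_j$ must be distributed consistently among the $e^{it\omega_j}$ twist in $V_t$, the $\sigma^{1/2}$ twist combined with the index flip $j \leftrightarrow j^\ast$ in $\cJ$, and the $e^{-\omega_j/4}$ normalization in $\partial$; a misallocated factor of $e^{\omega_j/2}$ will break either (f) or (h). Once the normalizations are set correctly, every individual verification is a short algebraic computation.
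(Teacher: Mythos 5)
Your route is genuinely different from the paper's: the paper disposes of this proposition in three lines by invoking \cite[Propositions 4.1 and 4.4]{Wir22a} for a (possibly infinite-dimensional) quadruple satisfying everything except (g), and then cutting down to the sub-bimodule $\mathrm{lin}\{\partial(\A)\B\}$, which is finite-dimensional and invariant under $(V_t)$ and $\cJ$ by (a), (b), (e), (f) and the Leibniz rule. You instead build the calculus explicitly from Alicki's theorem (essentially re-deriving \Cref{lem:Alicki_fodc}) and prove uniqueness by hand. The uniqueness half of your argument is correct and complete: $(\partial(\A)\B\vert\partial(\C)D)=\B^\ast\Gamma(\A,\C)D$ does show that $\Phi$ is well defined (via non-degeneracy of the inner product) and inner-product preserving, and the intertwining relations propagate from generators exactly as you say. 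The existence half is also viable, and the self-contained construction is arguably more informative than the citation, at the cost of the bookkeeping you anticipate: with $\partial(\A)_j=e^{-\omega_j/4}[\V_j,\A]$ one finds $\Gamma(\A,\B)=\sum_j e^{-\omega_j/2}[\V_j,\A]^\ast[\V_j,\B]$, so the inner product must be $\sum_j\xi_j^\ast\eta_j$ \emph{without} your factor $\tfrac12$, and $\cJ$ needs a minus sign, $(\cJ\xi)_j=-\sigma^{1/2}\xi_{j^\ast}^\ast\sigma^{-1/2}$, for (f) to hold (compare the sign in \Cref{lem:diff_calc_depolarizing}).

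The one step whose justification does not hold up is (g). You argue that each $\V_j\neq 0$ gives $[\V_j,M_n(\IC)]M_n(\IC)=M_n(\IC)$; this only shows that each \emph{coordinate projection} of $\mathrm{lin}\{\partial(\A)\B\}$ is all of $M_n(\IC)$, not that the span exhausts the direct sum $M_n(\IC)^d$ — the tuples $(\partial_j(\A)\B)_j$ are coupled across $j$. The conclusion is nevertheless true, but for a different reason: sub-bimodules of $M_n(\IC)^d\cong\IC^d\otimes M_n(\IC)$ are exactly of the form $W\otimes M_n(\IC)$ with $W\subseteq\IC^d$, and $\partial(\A)\in W\otimes M_n(\IC)$ for all $\A$ forces $\sum_j\phi_j e^{-\omega_j/4}\V_j$ to be central, hence zero (the $\V_j$ are traceless), for every $\phi\perp W$; linear independence of the $\V_j$ (which follows from Alicki's orthogonality condition and $\V_j\neq 0$) then gives $W=\IC^d$. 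Alternatively you can sidestep the issue entirely, as the paper does, by \emph{defining} $F$ to be the sub-bimodule generated by $\operatorname{ran}\partial$ and checking its invariance under $(V_t)$ and $\cJ$; then (g) holds by fiat and nothing else in your argument changes.
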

\begin{proof}
By \cite[Proposition 4.4]{Wir22a}, there exists a quadruple $(F^\prime,(V_t),\cJ,\partial)$ that satisfies all properties (a)--(h) except for (g) and with $F^\prime$ not necessarily finite-dimensional. Let $F=\mathrm{lin}\{\partial(A)B\mid A,B\in M_n(\IC)\}$. Then $F$ is finite-dimensional, the Leibniz rule ensures that $F$ is a sub-bimodule of $F^\prime$ and (a)--(f) ensure that $F$ is invariant under $(V_t)$ and $\cJ$. Thus $(F,(V_t|_F),\cJ|_F,\partial)$ satisfies (a)--(h).

The uniqueness property follows from \cite[Proposition 4.1]{Wir22a}.
\end{proof}

\begin{definition}
If $\L$ is the generator of a GNS-symmetric quantum Markov semigroup, we call a quadruple $(F,(V_t),\cJ,\partial)$ satisfying (a)--(h) from the previous proposition a \emph{first-order differential calculus} for $\L$.
\end{definition}

\begin{remark}
By the uniqueness part of \Cref{prop:fodc}, all notions defined in terms of a first-order differential calculus in the following are independent of the concrete choice of the quadruple $(F,(V_t),\cJ,\partial)$ and only depend on the  given quantum Markov semigroup.
\end{remark}

As a strongly continuous semigroup on a finite-dimensional Hilbert space, the map $t\mapsto V_t$ has a (unique) analytic extension $z\mapsto V_z$ to the complex plane. If $\rho\in M_n(\IC)$ is a positive invertible matrix, we define $V^\rho_z=\rho^{iz}\sigma^{-iz}V_z(\cdot)\sigma^{iz}\rho^{-iz}$ for $z\in\IC$. In particular, $V^\sigma_z=V_z$ for all $z\in\IC$.

\begin{lemma}
One has
\begin{equation*}
\tau\left[(\xi\vert V^\rho_{-i}\xi)\rho\right]=\tau\left[(V^\rho_{-i/2}\xi\vert V^\rho_{-i/2}\xi)\rho\right]
\end{equation*}
for all $\xi\in F$ and positive invertible $\rho\in M_n(\IC)$.
\end{lemma}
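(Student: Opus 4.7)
The plan is to reduce the identity to the special case $\rho=\sigma$, where it becomes the assertion that $V_{-i/2}$ is self-adjoint on the GNS Hilbert space $(F,\langle\cdot,\cdot\rangle_\sigma)$ with $\langle\xi,\eta\rangle_\sigma:=\tau[(\xi\mid\eta)\sigma]$. Since $(V_t)$ is a one-parameter unitary group on this Hilbert space, we may write $V_t=e^{itH}$ for a self-adjoint generator $H$, so that the analytic continuation satisfies $V_z^\star=V_{-\overline z}$ with respect to $\langle\cdot,\cdot\rangle_\sigma$. In particular $V_{-i/2}^\star=V_{-i/2}$ and $V_{-i}=V_{-i/2}^\star V_{-i/2}$, which yields $\tau[(\eta\mid V_{-i}\eta)\sigma]=\tau[(V_{-i/2}\eta\mid V_{-i/2}\eta)\sigma]$ for every $\eta\in F$.

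For general $\rho$, the correct substitution is $\eta:=R\xi$ with $R:=\sigma^{-1/2}\rho^{1/2}$, which is designed so that $R^\ast\sigma R=\rho$ and $R^\ast=\rho^{1/2}\sigma^{-1/2}$. The analytic continuation of property (a) of \Cref{prop:fodc} gives $V_z(A\xi)=\sigma^{iz}A\sigma^{-iz}V_z(\xi)$; evaluating at $A=R$ for $z=-i$ and $z=-i/2$ yields the intertwining relations
\[
V_{-i}(R\xi)=\sigma R\sigma^{-1}V_{-i}(\xi),\qquad V_{-i/2}(R\xi)=R^\ast V_{-i/2}(\xi),
\]
where the second identity uses $\sigma^{1/2}R\sigma^{-1/2}=R^\ast$. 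Using the bimodule identities $(A\zeta\mid\eta)=(\zeta\mid A^\ast\eta)$ and $(\zeta B\mid\zeta B)=B^\ast(\zeta\mid\zeta)B$ together with cyclicity of $\tau$, one then verifies
\[
\tau[(R\xi\mid V_{-i}(R\xi))\sigma]=\tau[(\xi\mid V^\rho_{-i}\xi)\rho]
\quad\text{and}\quad
\tau[(V_{-i/2}(R\xi)\mid V_{-i/2}(R\xi))\sigma]=\tau[(V^\rho_{-i/2}\xi\mid V^\rho_{-i/2}\xi)\rho],
\]
the latter relying on the simplification $S\rho S^\ast=\sigma$ for $S=\sigma^{1/2}\rho^{-1/2}$. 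The desired identity then follows by applying the base case to $R\xi$.

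The main conceptual obstacle is spotting the bimodule translation $R=\sigma^{-1/2}\rho^{1/2}$ that pulls the $\rho$-weighted picture back to the $\sigma$-weighted one; once this is in hand, the remainder is algebra with the bimodule inner product and cyclicity of the trace. The only technical point requiring separate justification is the self-adjointness of $V_{-i/2}$ on $(F,\langle\cdot,\cdot\rangle_\sigma)$; this is built into the meaning of ``group of isometries'' in \Cref{prop:fodc} under the standard convention, and can otherwise be checked from properties (a) and (h) using the cyclicity of $\tau(\cdot\sigma)$.
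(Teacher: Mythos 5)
Your proof is correct and follows essentially the same route as the paper's: the paper's argument is your substitution $\xi\mapsto\sigma^{-1/2}\rho^{1/2}\xi$ unrolled into a single chain of bimodule manipulations, with the same crucial step of splitting $V_{-i}=V_{-i/2}\circ V_{-i/2}$ and using the self-adjointness of $V_{-i/2}$ on the $\sigma$-GNS space. Your version merely makes explicit (via $V_z^\star=V_{-\overline z}$ for the unitary group $V_t=e^{itH}$) the spectral fact that the paper uses implicitly.
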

\begin{proof}
We have 
\begin{align*}
\tau(\xi\vert V^\rho_{-i}\xi)\rho)&=\tau\left[(\xi\vert \rho\sigma^{-1}(V_{-i}\xi)\sigma\rho^{-1})\rho\sigma^{-1}\sigma\right]\\
&=\tau\left[(\xi\vert V_{-i}(\sigma^{-1}\rho\xi))\sigma\right]\\
&=\tau\left[(V_{-i/2}\xi\vert \sigma^{-1/2}\rho\sigma^{-1/2}V_{-i/2}\xi)\sigma\right]\\
&=\tau\left[(\rho^{1/2}\sigma^{-1/2}V_{-i/2}\xi\vert \rho^{1/2}\sigma^{-1/2}V_{-i/2}\xi)\sigma^{1/2}\rho^{-1/2}\rho\rho^{-1/2}\sigma^{1/2}\right]\\
&=\tau\left[(\rho^{1/2}\sigma^{-1/2}(V_{-i/2}\xi)\sigma^{1/2}\rho^{-1/2}\vert \rho^{1/2}\sigma^{-1/2}(V_{-i/2}\xi)\sigma^{1/2}\rho^{-1/2})\rho\right]\\
&=\tau\left[(V^\rho_{-i/2}\xi\vert V^\rho_{-i/2}\xi)\rho\right].\qedhere
\end{align*}
\end{proof}

Let $\Lambda$ be an operator mean function and $f$ the associated operator monotone function given by $f(t)=\Lambda(1,t)$. For $\xi,\eta\in F$ we define 
$$\langle\xi,\eta\rangle_{\Lambda,\rho}=\tau\left[(\xi\vert f(V^\rho_{-i})\eta)\rho\right]\qquad \textnormal{and}\qquad
\norm{\xi}_{\Lambda,\rho}=\langle\xi,\xi\rangle_{\Lambda,\rho}^{1/2}.$$ 
By the previous lemma, the operator $V^\rho_{-i}$ is positive with respect to the inner product $\tau((\,\cdot\,\vert\,\cdot\,)\rho)$, hence this expression makes sense in terms of functional calculus for self-adjoint operators.

In the following lemma we denote by $L(\rho)$ and $R(\rho)$ the left and right multiplication operators on $M_n(\IC)$, that is, $L(\rho)(\A)=\rho \A$ and $R(\rho)(\A)=\A\rho$.

\begin{lemma}\label{lem:Alicki_fodc}
If $\L$ generates a quantum Markov semigroup that is GNS-symmetric with respect to $\sigma$ and $\V_j$, $\omega_j$ are as in Alicki's theorem, then $F=M_n(\IC)^d$ with inner product $((A_j)\vert (B_j))=\sum_j A_j^\ast B_j$, $(V_t \xi)_j=e^{i\omega_j t}\sigma^{it}\xi_j\sigma^{-it}$, $(\cJ \xi)_j=\sigma^{1/2}\xi_{j^\ast}^\ast\sigma^{-1/2}$ and $(\partial \A)_j=e^{-\omega_j/4}[\V_j,\A]$ is a first-order differential calculus for $\L$.

Moreover, if $\Lambda$ is an operator mean function, then
\begin{equation*}
\norm{\partial\A}_{\Lambda,\rho}^2=\sum_{j=1}^d \tau\left[[\V_j,\A]^\ast\Lambda(e^{\omega_j/2}L(\rho),e^{-\omega_j/2}R(\rho))[\V_j,\A]\right].
\end{equation*}
\end{lemma}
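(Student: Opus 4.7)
The plan is twofold: first verify properties (a)--(h) of Proposition~\ref{prop:fodc} on $F=M_n(\IC)^d$ equipped with its componentwise bimodule structure, then evaluate $\norm{\partial A}_{\Lambda,\rho}$ by joint spectral calculus of $L(\rho)$ and $R(\rho)$. The key arithmetic identity to extract once at the outset is $\omega_{j^\ast}=-\omega_j$: applying $\ast$ to $\sigma V_j\sigma^{-1}=e^{-\omega_j}V_j$ and using $V_j^\ast=V_{j^\ast}$ gives $\sigma V_{j^\ast}\sigma^{-1}=e^{\omega_j}V_{j^\ast}$, which by spectral calculus promotes to $\sigma^{iz}V_j\sigma^{-iz}=e^{-iz\omega_j}V_j$ for all $z\in\IC$.

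Properties (a), (b), (d) of the FODC axioms are then immediate from the componentwise structure and the involutivity of $j\mapsto j^\ast$. Property (e) reduces to the commutator identity $[V_j,\sigma^{it}A\sigma^{-it}]=e^{i\omega_j t}\sigma^{it}[V_j,A]\sigma^{-it}$ (obtained by pushing the $\sigma^{it}$ through $V_j$); property (f) follows from the $t=-i/2$ analytic continuation of the same identity combined with $V_{j^\ast}^\ast=V_j$. Property (c) follows from cyclicity of $\tau$ after re-indexing $j\to j^\ast$, with both sides collapsing to $\sum_j\tau(\eta_j^\ast\xi_j\sigma)$.

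For (h), I would substitute Alicki's formula into $2\Gamma(A,B)=\L(A)^\ast B+A^\ast\L(B)-\L(A^\ast B)$ and expand via the Leibniz rule; using $[V_j,X]^\ast=-[V_j^\ast,X^\ast]$ to re-pair terms, the eight resulting summands collapse to $2\sum_j e^{-\omega_j/2}[V_j,A]^\ast[V_j,B]=2(\partial A\mid\partial B)$. For the span condition (g), note that each $V_j$ is non-scalar (since $V_j\neq 0$ and $\tau(V_j)=0$), hence $\{[V_j,A]B:A,B\in M_n(\IC)\}$ already spans $M_n(\IC)$; the orthogonality (a) of Alicki then permits separating the different indices. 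Alternatively, one may invoke the uniqueness part of Proposition~\ref{prop:fodc} applied to the sub-bimodule $\operatorname{lin}\{\partial(A)B\}$.

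For the norm formula, the computation $(V_z\xi)_j=e^{i\omega_j z}\sigma^{iz}\xi_j\sigma^{-iz}$ shows that the $\sigma^{\pm iz}$ factors in $V^\rho_z=\rho^{iz}\sigma^{-iz}V_z(\cdot)\sigma^{iz}\rho^{-iz}$ cancel, yielding $(V^\rho_z\xi)_j=e^{i\omega_j z}\rho^{iz}\xi_j\rho^{-iz}$, so $V^\rho_{-i}$ acts on the $j$-th component as $e^{\omega_j}L(\rho)R(\rho^{-1})$. Diagonalizing $\rho=\sum_k\lambda_k P_k$ produces a joint spectral decomposition of the commuting $L(\rho),R(\rho)$ indexed by pairs $(k,l)$, on which all the relevant operators act by scalar multiplication; using $\Lambda(s,t)=sh(t/s)$ one checks scalar-by-scalar that $e^{-\omega_j/2}R(\rho)\,h(V^\rho_{-i})|_j$ coincides with $\Lambda(e^{\omega_j/2}L(\rho),e^{-\omega_j/2}R(\rho))$. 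Plugging $\partial A_j=e^{-\omega_j/4}[V_j,A]$ into $\norm{\partial A}_{\Lambda,\rho}^2=\sum_j\tau(\partial A_j^\ast\,h(V^\rho_{-i})(\partial A_j)\,\rho)$ and combining the two $e^{-\omega_j/4}$ factors with the trailing $\rho$ (read as $R(\rho)$) yields the claimed expression. The main obstacle is property (g), which unlike the other axioms requires a genuine linear-algebra argument rather than a formal manipulation; a close second is the precise bookkeeping of the exponential factors $e^{\pm\omega_j/4}$ and $e^{\pm\omega_j/2}$ so that the final identification with $\Lambda(e^{\omega_j/2}L(\rho),e^{-\omega_j/2}R(\rho))$ lands on the nose.
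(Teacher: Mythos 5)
Your proposal is correct and takes essentially the same route as the paper, which disposes of the first part as ``direct calculations'' and derives the norm formula exactly as you do, from $(V^\rho_{-i}\xi)_j=e^{\omega_j}\rho\xi_j\rho^{-1}=e^{\omega_j}L(\rho)R(\rho)^{-1}\xi_j$ together with the identification of $f(AB^{-1})B$ with the mean $\Lambda(A,B)$ of the commuting positive operators $A=e^{\omega_j/2}L(\rho)$ and $B=e^{-\omega_j/2}R(\rho)$. One thing your (correct) method for axiom (f) would surface when carried out: since $[V_{j^\ast},A]^\ast=-[V_j,A^\ast]$ and $\omega_{j^\ast}=-\omega_j$, the operator $\cJ$ as literally stated yields $\cJ\partial(A)=-\partial(\sigma^{1/2}A^\ast\sigma^{-1/2})$, so its definition should carry a minus sign, $(\cJ\xi)_j=-\sigma^{1/2}\xi_{j^\ast}^\ast\sigma^{-1/2}$, consistent with the convention in \Cref{lem:diff_calc_depolarizing} and in the graph case; this is a sign typo in the statement rather than a flaw in your argument.
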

\begin{proof}
The first part follows by direct calculations (compare with \cite[Section 8.2]{Wir22b}).

Let $f$ be the operator monotone function associated with $\Lambda$. Note that 
$$(V^\rho_{-i}\xi)_j=\rho\sigma^{-1}(e^{\omega_j }\sigma\xi_j\sigma^{-1})\sigma\rho^{-1}
=e^{\omega_j }\rho \xi_j \rho^{-1}
=e^{\omega_j}L(\rho)R(\rho)^{-1}(\xi_j).$$
We have
\begin{align*}
\norm{\partial\A}_{\Lambda,\rho}^2&=\tau\left[(\partial\A\vert f(V^\rho_{-i})\partial\A)\rho\right]\\
&=\sum_{j=1}^d \tau\left[(\partial\A)_j^\ast f(e^{\omega_j}L(\rho)R(\rho)^{-1})(\partial\A)_j\rho\right]\\
&=\sum_{j=1}^d \tau\left[[\V_j,\A]^\ast f\left((e^{\omega_j/2}L(\rho))(e^{-\omega_j/2}R(\rho))^{-1}\right)\left(e^{-\omega_j/2}R(\rho)\right)[\V_j,\A]\right]\\
&=\sum_{j=1}^d \tau\left[[\V_j,\A]^\ast \Lambda(e^{\omega_j/2}L(\rho),e^{-\omega_j/2}R(\rho))[\V_j,\A]\right].\qedhere
\end{align*}
\end{proof}

\begin{definition}
Let $\Lambda$ be an operator mean function and $f(t)=\Lambda(1,t)$ for $t\geq 0$. We say that a GNS-symmetric quantum Markov semigroup with first-order differential calculus $(F,(V_t),\cJ,\partial)$ satisfies the gradient estimate $\mathrm{GE}_\Lambda(K,\infty)$ if 
\begin{equation*}
\norm{\partial(P_t(\A))}_{\Lambda,\rho}^2\leq e^{-2Kt}\norm{\partial(\A)}_{\Lambda,P_t^\dagger(\rho)}^2
\end{equation*}
for all self-adjoint $\A\in M_n(\IC)$, positive definite $\rho\in M_n(\IC)$ and $t\geq 0$.
\end{definition}

\begin{remark}
It follows easily from the uniqueness result for the first-order differential calculus associated with a GNS-symmetric quantum Markov semigroup that the gradient estimate does not depend on the choice of $(F,(V_t),\cJ,\partial)$. Furthermore, by the previous lemma, our gradient estimate is the same as the one used by Carlen and Maas to characterize their noncommutative Ricci curvature lower bounds \cite[Thereom 10.4]{CM20a} in the case when their differential structure is the one obtained from Alicki's theorem.
\end{remark}

\begin{remark}
Note that we restrict to self-adjoint matrices $A$ in the definition of the gradient estimate $\mathrm{GE}_\Lambda$. This is in line with the formulation from \cite{CM20a}. In particular, it is strong enough to imply the modified logarithmic Sobolev inequality. In contrast, in prior work by the second- and third-named author \cite{WZ21}, the gradient estimate was studied for all matrices $A\in M_n(\IC)$, not necessarily self-adjoint. As we show below in the case of dephasing semigroups (Subsection \ref{subsec:dephasing_QMS}), the constant in the gradient estimate $\mathrm{GE}_\Lambda$ may be better if one restricts to self-adjoint matrices $A$, and thus allows to obtain a stronger inequality for example in the modified logarithmic Sobolev inequality.
\end{remark}


\begin{definition}
Let $K\in\IR$. We say that a GNS-symmetric quantum Markov semigroup $(P_t)$ with first-order differential calculus $(F,(V_t),\cJ,\partial)$ has intertwining curvature lower bounded by $K$ if there exists a strongly continuous semigroup $(\vec P_t)$ on $F$ such that
\begin{itemize}
\item [(a)] $\vec P_t\partial=\partial P_t$ for all $t\in \IR$,
\item  [(b)] $\cJ\vec P_t=\vec P_t\cJ$ for all $t\in\IR$,
\item  [(c)] $\vec P_t V_s=V_s\vec P_t$ for all $s,t\in\IR$,
\item  [(d)] $(\vec P_t\xi\vert\vec P_t \xi)\leq e^{-2Kt}P_t(\xi\vert\xi)$ for all $t\in \IR$, $\xi\in F$.
\end{itemize}
\end{definition}

Again, intertwining curvature bounds can be rephrased in terms of a $\Gamma_2$-type criterion.

\begin{proposition}\label{prop:Gamma_2_intertwining_QMS}
A GNS-symmetric quantum Markov semigroup $(P_t)$ with first-order differential calculus $(F,\cJ,(V_t),\partial)$ has intertwining curvature bounded below by $K$ if and only if there exists a linear operator $\vec \L$ on $F$ such that $\vec\L\partial=\partial\vec\L$, $\vec \L\cJ=\cJ\vec \L$ and
\begin{equation*}
\frac 1 2\left((\vec\L\xi\vert\xi)+(\xi\vert\vec\L\xi)-\L(\xi\vert\xi)\right)\geq K(\xi\vert\xi)
\end{equation*}
for all $\xi\in F$.
\end{proposition}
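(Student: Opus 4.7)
The plan is to adapt the standard equivalence between semigroup-level gradient contractions and infinitesimal $\Gamma_2$-type inequalities to the setting of a first-order differential calculus, in direct analogy with \Cref{prop:Gamma_2_intertwining} for graphs.

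For the direction $(\Rightarrow)$, I would let $\vec\L$ be the operator characterised by $\vec P_t=e^{-t\vec\L}$. Differentiating the intertwining identity $\vec P_t\partial=\partial P_t$ at $t=0$ gives $\vec\L\partial=\partial\L$, and differentiating $\cJ\vec P_t=\vec P_t\cJ$ gives $\vec\L\cJ=\cJ\vec\L$. For the $\Gamma_2$-inequality, note that both sides of condition (d) equal $(\xi\vert\xi)$ at $t=0$; since the L\"owner-order inequality $(\vec P_t\xi\vert\vec P_t\xi)\le e^{-2Kt}P_t(\xi\vert\xi)$ holds for all $t\ge 0$, the same ordering must transfer to the one-sided derivatives at $t=0$. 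A direct computation yields $-(\vec\L\xi\vert\xi)-(\xi\vert\vec\L\xi)\le -2K(\xi\vert\xi)-\L(\xi\vert\xi)$, which after rearrangement and division by $2$ is precisely the stated $\Gamma_2$-bound.

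For the direction $(\Leftarrow)$, I would set $\vec P_t=e^{-t\vec\L}$. Relations (a) and (b) then follow by exponentiating the generator-level identities $\vec\L\partial=\partial\L$ and $\vec\L\cJ=\cJ\vec\L$. The heart of the argument is (d), which I would obtain via the usual interpolation trick: fix $\xi\in F$, $t>0$ and consider the $M_n(\IC)$-valued function $\phi(s)=P_s((\vec P_{t-s}\xi\vert\vec P_{t-s}\xi))$ for $s\in[0,t]$. A direct differentiation using $\tfrac{d}{ds}P_s=-P_s\L$ and $\tfrac{d}{ds}\vec P_{t-s}\xi=\vec\L\vec P_{t-s}\xi$ gives
$$\phi'(s)=P_s\bigl[(\vec\L\eta_s\vert\eta_s)+(\eta_s\vert\vec\L\eta_s)-\L(\eta_s\vert\eta_s)\bigr],\qquad \eta_s:=\vec P_{t-s}\xi.$$
The $\Gamma_2$-hypothesis applied to $\eta_s$, together with positivity of $P_s$ on the L\"owner order, yields $\phi'(s)\ge 2K\phi(s)$. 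An operator-valued Gronwall argument (the function $e^{-2Ks}\phi(s)$ has non-negative derivative, hence is non-decreasing) gives $\phi(t)\ge e^{2Kt}\phi(0)$, which is exactly condition (d).

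I do not expect a genuine obstacle; the computation is essentially bookkeeping once the first-order differential calculus is in place, the main conceptual point being that $(\xi\vert\xi)$ now lives in the matrix algebra $M_n(\IC)$ on which both $\L$ and $P_s$ act and preserve the L\"owner order. The only subtlety not directly encoded in the $\Gamma_2$-condition as stated is property (c), the commutation $\vec P_tV_s=V_s\vec P_t$; this can be enforced by symmetrising $\vec\L$ over the modular group $(V_s)$ without affecting the other relations, since $\L$ itself commutes with each $V_s$ by GNS-symmetry. Overall, the proof is the direct operator-algebraic transcription of the graph argument in \Cref{prop:Gamma_2_intertwining}.
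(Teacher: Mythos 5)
Your proof is correct and is exactly the standard semigroup/interpolation argument the paper has in mind (it states this proposition without proof, referring for the graph analogue to ``standard semigroup arguments''): differentiate the semigroup inequality at $t=0$ for one direction, and use the function $s\mapsto P_s\bigl((\vec P_{t-s}\xi\vert\vec P_{t-s}\xi)\bigr)$ together with positivity of $P_s$ and a Gronwall argument for the other. You are also right that condition (c) is not encoded in the $\Gamma_2$-criterion and must be enforced separately; your averaging of $\vec\L$ over (the closure of) the group $(V_s)$ does work, the precise reason being that each conjugate $V_s\vec\L V_{-s}$ still satisfies all three hypotheses because $\partial$ and the bimodule inner product are modular-covariant, i.e.\ $V_s\partial(\A)=\partial(\sigma^{is}\A\sigma^{-is})$ and $(V_s\xi\vert V_s\eta)=\sigma^{is}(\xi\vert\eta)\sigma^{-is}$, while $\L$ commutes with $\A\mapsto\sigma^{is}\A\sigma^{-is}$ by GNS-symmetry.
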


\begin{theorem}\label{thm:grad_est_from_intertwining_QMS}
If a GNS-symmetric quantum Markov semigroup has intertwining curvature bounded below by $K$, then it satisfies the gradient estimate $\mathrm{GE}_\Lambda(K,\infty)$ for every operator mean function $\Lambda$.
\end{theorem}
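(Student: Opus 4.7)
The plan is to extend the Donoghue-interpolation strategy from the proof of \Cref{thm:intertwining_GE_graphs} to the noncommutative setting. By condition (a), the gradient estimate $\mathrm{GE}_\Lambda(K,\infty)$ is equivalent to the contraction $\norm{\vec P_t\xi}_{\Lambda,\rho}\leq e^{-Kt}\norm{\xi}_{\Lambda,P_t^\dagger(\rho)}$ for $\xi$ in the range of $\partial$; as in the graph case, I will establish it for every $\xi\in F$. The lemma preceding \Cref{lem:Alicki_fodc} ensures that $V^\rho_{-i}$ is a positive invertible operator on the Hilbert space $(F,\tau((\,\cdot\,\vert\,\cdot\,)\rho))$, and $\norm{\,\cdot\,}_{\Lambda,\rho}^2$ is precisely the $f(V^\rho_{-i})$-twist of this Hilbert space structure with $f(t)=\Lambda(1,t)$. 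Since the identity and $V^\rho_{-i}$ commute, Donoghue's interpolation theorem reduces the task to two endpoint contractions, corresponding to $f\equiv 1$ (left trivial mean) and $f(t)=t$ (right trivial mean).

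The left-trivial endpoint
\[
\tau((\vec P_t\xi\vert\vec P_t\xi)\rho)\leq e^{-2Kt}\tau((\xi\vert\xi)P_t^\dagger(\rho))
\]
follows directly from (d) by pairing the matrix inequality $(\vec P_t\xi\vert\vec P_t\xi)\leq e^{-2Kt}P_t(\xi\vert\xi)$ with $\rho$ under $\tau$ and invoking the trace-adjoint identity $\tau(P_t(X)\rho)=\tau(XP_t^\dagger(\rho))$. For the right-trivial endpoint
\[
\tau((\vec P_t\xi\vert V^\rho_{-i}\vec P_t\xi)\rho)\leq e^{-2Kt}\tau((\xi\vert V^{P_t^\dagger(\rho)}_{-i}\xi)P_t^\dagger(\rho)),
\]
I would rewrite both sides via the half-step identity $\tau((\xi\vert V^\rho_{-i}\xi)\rho)=\tau((V^\rho_{-i/2}\xi\vert V^\rho_{-i/2}\xi)\rho)$ from the same lemma, apply (d) after a suitable modular substitution of $\xi$, and use $\vec P_t\cJ=\cJ\vec P_t$ from (b) together with the analytic extension of $\vec P_t V_s=V_s\vec P_t$ from (c) to $s=-i/2$ to carry the $\rho$-twists across $\vec P_t$. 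In the tracial case the modular corrections vanish and this reduces to the graph-like exchange of left and right multiplication by $\rho$ under $\cJ$.

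Once both endpoint bounds are in place, Donoghue's theorem yields
\[
\tau((\vec P_t\xi\vert f(V^\rho_{-i})\vec P_t\xi)\rho)\leq e^{-2Kt}\tau((\xi\vert f(V^{P_t^\dagger(\rho)}_{-i})\xi)P_t^\dagger(\rho))
\]
for every operator monotone $f$ with $f(1)=1$, and specializing to $\xi=\partial A$ with self-adjoint $A$ concludes via (a). The main obstacle is the right-trivial endpoint: in the non-tracial setting $\cJ$ does not simply exchange left and right multiplication by $\rho$, but carries the modular $\sigma$-twists from \Cref{prop:fodc}(b), so tracking these twists against $\vec P_t$ through the intertwining with the modular group $(V_s)$ is the technical heart of the argument.
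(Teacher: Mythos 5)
Your proposal is correct and, for the two endpoint estimates, follows the same route as the paper: the left-trivial endpoint is exactly the paper's observation that condition (d) paired against $\rho$ under $\tau$ gives $\vec P_t^\ast\vec P_t\leq e^{-2Kt}$ as an operator from $H_1=(F,\tau((\,\cdot\mid\cdot\,)P_t^\dagger\rho))$ to $H_2=(F,\tau((\,\cdot\mid\cdot\,)\rho))$, and your plan for the right-trivial endpoint (half-step identity for $V^\rho_{-i}$, the $\cJ$-isometry and bimodule twist from \Cref{prop:fodc}(b)--(c), the analytic extension of $\vec P_tV_s=V_s\vec P_t$ to $s=-i/2$, then condition (d) and reversal of the computation) is precisely the chain of identities the paper carries out. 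You leave that chain sketched rather than executed --- in particular the step where $\cJ$ converts the left twist $\rho^{1/2}\sigma^{-1/2}(\,\cdot\,)$ into a right module action so that the weight can be reabsorbed as $\tau((\,\cdot\mid\cdot\,)\rho)$ --- but you correctly identify every tool needed, so I would not call this a gap so much as an unexpanded computation. The one genuine divergence is the final step: you close with Donoghue's exact interpolation theorem (taking $A_j=\1$ and $B_j$ the relevant $V^{\cdot}_{-i}$, which trivially commute), exactly as the paper does in the graph case, whereas the paper's proof in the quantum setting instead uses that every operator monotone $f\colon[0,\infty)\to[0,\infty)$ is operator concave and applies the Hansen--Pedersen Jensen inequality to get $\vec P_t^\ast f(V^\rho_{-i})\vec P_t\leq e^{-2Kt}f(e^{2Kt}\vec P_t^\ast V^\rho_{-i}\vec P_t)\leq e^{-2Kt}f(V^{P_t^\dagger\rho}_{-i})$. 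The two closures are interchangeable here; your interpolation route has the advantage of running verbatim parallel to \Cref{thm:intertwining_GE_graphs}, while the paper's Jensen route avoids invoking interpolation machinery and makes the operator inequality $\vec P_t^\ast f(V^\rho_{-i})\vec P_t\leq e^{-2Kt}f(V^{P_t^\dagger\rho}_{-i})$ explicit.
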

\begin{proof}
Choose a strongly continuous semigroup $(\vec P_t)$ on $F$ as in the definition of intertwining curvature bounds. Let $H_1$ denote $F$ endowed with the inner product
\begin{equation*}
F\times F\to\IC,\,(\xi,\eta)\mapsto \tau((\xi\vert\eta)P_t^\dagger\rho)
\end{equation*}
and let $H_2$ denote $F$ endowed with the inner product
\begin{equation*}
F\times F\to\IC,\,(\xi,\eta)\mapsto \tau((\xi\vert\eta)\rho).
\end{equation*}
If we view $\vec P_t$ as an operator from $H_1$ to $H_2$, then the intertwining curvature bound implies $\vec P_t^\ast \vec P_t\leq e^{-2Kt}$.

Moreover, since $\vec P_t$ commutes with $\cJ$ and $(V_t)$ (and thus $V_{-i/2}$ as well), we have 
\begin{align*}
\tau\left[(\vec P_t\xi\vert V_{-i}^\rho\vec P_t\xi)\rho\right]&=\tau\left[(V^\rho_{-i/2}\vec P_t\xi\vert V^\rho_{-i/2}\vec P_t\xi)\rho\right]\\
&=\tau\left[(\rho^{1/2}\sigma^{-1/2} V_{-i/2}\vec P_t\xi\vert \rho^{1/2}\sigma^{-1/2} V_{-i/2}\vec P_t\xi)\sigma\right]\\
&=\tau\left[(\cJ(\rho^{1/2}\sigma^{-1/2} V_{-i/2}\vec P_t\xi)\vert\cJ(\rho^{1/2}\sigma^{-1/2} V_{-i/2}\vec P_t\xi))\sigma\right]\\
&=\tau\left[((\cJ V_{-i/2}\vec P_t\xi)\sigma^{1/2}\sigma^{-1/2}\rho^{1/2}\sigma^{-1/2}\vert (\cJ V_{-i/2}\vec P_t\xi)\sigma^{1/2}\sigma^{-1/2}\rho^{1/2}\sigma^{-1/2})\sigma\right]\\
&=\tau\left[((\vec P_t \cJ V_{-i/2}\xi)\rho^{1/2}\sigma^{-1/2}\vert (\vec P_t \cJ V_{-i/2}\xi)\rho^{1/2}\sigma^{-1/2})\sigma\right] \\
&=\tau\left[(\vec P_t\cJ V_{-i/2}\xi\vert \vec P_t\cJ V_{-i/2}\xi)\rho\right]\\
&\leq e^{-2Kt}\tau\left[(\cJ V_{-i/2}\xi\vert \cJ V_{-i/2}\xi)P_t^\dagger\rho\right].
\end{align*}
If we reverse the calculation up to the last equality, we see that the right side of the last inequality is $e^{-2Kt}\tau\left[(\xi\vert V^{P_t^\dagger\rho}_{-i}\xi)P_t^\dagger\rho\right]$. In other words, $\vec P_t^\ast V^\rho_{-i}\vec P_t\leq e^{-2Kt}V^{P_t^\dagger\rho}_{-i}$ if $V^\rho_{-i}$ is viewed as operator on $H_2$ and $V^{P_t^\dagger\rho}_{-i}$ is viewed as operator on $H_1$.

Now let $\Lambda$ be an operator mean function and $f$ the associated operator monotone function. Since every operator monotone function from $[0,\infty)$ to itself is operator concave, we deduce from Jensen's inequality for operators \cite[Theorem 2.5]{HP81} and the fact $e^{2Kt}P_t^\ast \vec P_t\leq \un$ that
\begin{equation*}
\vec P_t^\ast f(V^\rho_{-i})\vec P_t\leq e^{-2Kt}f(e^{2Kt}\vec P_t^\ast V^\rho_{-i}\vec P_t)\leq e^{-2Kt}f(V^\rho_{-i}),
\end{equation*}
which means
\begin{align*}
\norm{\vec P_t\xi}_{\Lambda,\rho}^2&=\tau\left[(\vec P_t \xi\vert f(V^\rho_{-i})\vec P_t\xi)\rho\right]\\
&=\langle \vec P_t\xi,f(V^\rho_{-i})\vec P_t\xi\rangle_{H_2}\\
&=\langle \xi,\vec P_t^\ast f(V^\rho_{-i})\vec P_t\xi\rangle_{H_1}\\
&\leq e^{-2Kt}\langle \xi,f(V^\rho_{-i})\xi\rangle_{H_1}\\
&= e^{-2Kt}\tau\left[(\xi\vert f(V^\rho_{-i})\xi)P_t^\dagger\rho\right]\\
&=e^{-2Kt}\norm{\xi}_{\Lambda,P_t^\dagger\rho}^2.
\end{align*}
Taking $\xi=\partial(\A)$ and observing that $\vec P_t\partial(\A)=\partial(P_t(\A))$, we obtain the gradient estimate $\mathrm{GE}_\Lambda(K,\infty)$.
\end{proof}

\begin{remark}\label{rmk:Bakry-Emery_from_intertwining_QMS}
Note that we did not use that $A$ is self-adjoint in the proof. In fact, for the Bakry--Émery estimate it is clear from the definition that an intertwining curvature lower bound $K$ implies $\Gamma(P_t(A))\leq e^{-2Kt}P_t\Gamma(A)$ for all $A\in M_n(\IC)$, not necessarily self-adjoint.
\end{remark}

\subsection{Intertwining curvature for dephasing semigroups}\label{subsec:dephasing_QMS}

Let $\sigma\in M_n(\IC)$ be a positive definite matrix with $\tau(\sigma)=1$ and let $E$ be a conditional expectation such that $\tau(E(A)\sigma)=\tau(A\sigma)$ for all $A\in M_n(\IC)$. The dephasing semigroup $(P_t)$ given by $P_t(\A)=e^{-t}\A+(1-e^{-t})E(\A)$ is a quantum Markov semigroup that is GNS-symmetric with respect to $\sigma$. Its generator is given by $\L(\A)=\A-E(\A)$.

We first give a Bakry--Émery curvature bound for $(P_t)$ in terms of the so-called \emph{Pimsner--Popa index} \cite{PP86}. The \emph{Pimsner--Popa index} $C(E)$ of $E$ is the supremum over all $C>0$ such that $C \A\leq E(\A)$ for all positive $\A\in M_n(\IC)$.

\begin{lemma}\label{lem:Bakry-Emery_dephasing}
Let $K=\frac 1 2+\frac{C(E)}{1+C(E)}$. The dephasing semigroup $(P_t)$ satisfies
\begin{equation*}
\Gamma(P_t \A)\leq e^{-2Kt}P_t\Gamma(\A)
\end{equation*}
for all $\A\in M_n(\IC)$.
\end{lemma}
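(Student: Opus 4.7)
The key observation is that $\mathcal{L}$ is an idempotent: since $E$ is a conditional expectation, $E^2=E$ and hence $\mathcal{L}^2=\mathcal{L}$. Consequently, writing $B=\mathcal{L}(\A)=\A-E(\A)$, one has $E(B)=0$ (and thus also $E(B^*)=0$ since $E$ is $*$-preserving) and $\mathcal{L}(B)=B$. Combined with the bimodule identity $E(X\A Y)=XE(\A)Y$ for $X,Y$ in the range of $E$, a direct bilinear expansion of the definition of the carré du champ yields
\begin{equation*}
2\Gamma(\A)=B^*B+E(B^*B).
\end{equation*}

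For the iterated carré du champ, we exploit $\mathcal{L}(\mathcal{L}\A)=\mathcal{L}\A$ to obtain $\Gamma(\mathcal{L}\A,\A)=\Gamma(\A,\mathcal{L}\A)=\Gamma(\A)$. Since $E$ is idempotent, $E(\Gamma(\A))=E(B^*B)$, and therefore $\mathcal{L}\Gamma(\A)=\tfrac{1}{2}(B^*B-E(B^*B))$. Substituting into the definition of $\Gamma_2$ gives
\begin{equation*}
4\Gamma_2(\A)=B^*B+3E(B^*B).
\end{equation*}
For $K=\tfrac{1}{2}+\tfrac{C(E)}{1+C(E)}$ one checks $1-2K=-\tfrac{2C(E)}{1+C(E)}$ and $3-2K=\tfrac{2}{1+C(E)}$, so
\begin{equation*}
4\bigl(\Gamma_2(\A)-K\Gamma(\A)\bigr)=(1-2K)B^*B+(3-2K)E(B^*B)=\tfrac{2}{1+C(E)}\bigl(E(B^*B)-C(E)\,B^*B\bigr),
\end{equation*}
which is positive by the defining property of the Pimsner--Popa index applied to $B^*B\geq 0$.

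The exponential gradient estimate now follows from the pointwise bound $\Gamma_2(\A)\geq K\Gamma(\A)$ by the standard semigroup interpolation argument: on $[0,t]$ consider $h(s)=e^{-2Ks}P_s\Gamma(P_{t-s}\A)$, and use the defining identity $-\mathcal{L}\Gamma(X)+\Gamma(\mathcal{L}X,X)+\Gamma(X,\mathcal{L}X)=2\Gamma_2(X)$ to compute
\begin{equation*}
h'(s)=2e^{-2Ks}P_s\bigl(\Gamma_2(P_{t-s}\A)-K\Gamma(P_{t-s}\A)\bigr)\geq 0,
\end{equation*}
so that $\Gamma(P_t\A)=h(0)\leq h(t)=e^{-2Kt}P_t\Gamma(\A)$. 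I do not anticipate any serious obstacle: once the decomposition $\A=E(\A)+B$ is in hand the computations essentially unfold themselves, and the only subtlety is the bookkeeping of conditional-expectation identities, all of which are direct consequences of $E$ being a unital, $*$-preserving, idempotent bimodule projection.
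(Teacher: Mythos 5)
Your proof is correct and follows essentially the same route as the paper: an explicit computation of $\Gamma$ and $\Gamma_2$ for the dephasing generator (the paper normalizes to $E(\A)=0$ where you carry $B=\A-E(\A)$ throughout, which is the same thing), reduction of $\Gamma_2\geq K\Gamma$ to the Pimsner--Popa inequality $E(B^\ast B)\geq C(E)\,B^\ast B$, and the standard semigroup interpolation. Your intermediate identities $2\Gamma(\A)=B^\ast B+E(B^\ast B)$ and $4\Gamma_2(\A)=B^\ast B+3E(B^\ast B)$ check out.
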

\begin{proof}
As in the commutative case, it suffices to verify $\Gamma_2\geq K\Gamma$. We have
\begin{align*}
\Gamma(\A,\B)&=\frac 1 2\left[(\A-E(\A))^\ast \B+\A^\ast(\B-E(\B))-\A^\ast \B+E(\A^\ast \B)\right]\\
&=\frac 1 2\left[\A^\ast \B+E(\A^\ast \B)-\A^\ast E(\B)-E(\A)^\ast \B\right]\\
&=\frac 1 2(\A-E(\A))^\ast (\B-E(\B))+\frac 1 2(E(\A^\ast \B)-E(\A)^\ast E(\B)).
\end{align*}
In particular, $\Gamma(E(\A),\B)=0$ for all $\A,\B\in M_n(\IC)$.
Moreover,
\begin{align*}
\Gamma_2(\A)&=\frac 1 2\left[\Gamma(\A-E(\A),\A)+\Gamma(\A,\A-E(\A))-\Gamma(\A)+E(\Gamma(\A))\right]\\
&=\frac 1 2\left[\Gamma(\A)+E(\Gamma(\A))\right].
\end{align*}
Otherwise replacing $\A$ by $\A-E(\A)$, we can assume without loss of generality that $E(\A)=0$. Then
\begin{equation*}
	\Gamma (\A)=\frac 1 2 (\A^\ast \A+E(\A^\ast \A))\qquad \textnormal{and}\qquad E(\Gamma (\A))=E(\A^\ast \A).
\end{equation*}
Therefore, 
\begin{align*}
\Gamma_2(\A)-K\Gamma(\A)&=\frac 1 2 E(\A^\ast \A)-\frac {C(E)}{2(1+C(E))}(\A^\ast \A+E(\A^\ast \A))\\
&\geq \frac 1 2 E(\A^\ast \A)-\frac{C(E)}{2(1+C(E))}\left(1+\frac 1{C(E)}\right)E(\A^\ast \A)\\
&=0.\qedhere
\end{align*}
\end{proof}

\begin{proposition}\label{prop:intertwining_curv_dephasing}
The dephasing semigroup $(P_t)$ associated with the conditional expectation $E$ has intertwining curvature bounded below by $\frac 1 2+\frac{C(E)}{1+C(E)}$.
\end{proposition}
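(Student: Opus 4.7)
The plan is to apply \Cref{prop:Gamma_2_intertwining_QMS}, which reduces the claim to exhibiting an operator $\vec\L$ on a first-order differential calculus $(F,(V_t),\cJ,\partial)$ of $\L$ that intertwines $\partial$ with $\L$, commutes with $\cJ$ and every $V_t$, and satisfies the $\Gamma_2$-type bound
\[
\tfrac{1}{2}\bigl((\vec\L\xi\,|\,\xi)+(\xi\,|\,\vec\L\xi)-\L(\xi|\xi)\bigr)\ \geq\ K(\xi|\xi),\qquad\xi\in F,
\]
where $K=\tfrac{1}{2}+\tfrac{C(E)}{1+C(E)}$.

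First I would observe that $\partial\circ E=0$: the identity $\Gamma(E(A),B)=0$ from the proof of \Cref{lem:Bakry-Emery_dephasing}, combined with $\Gamma(X,Y)=(\partial X\,|\,\partial Y)$ and the fact that $F=\operatorname{lin}\{\partial(A)B\}$, gives $(\partial E(A)\,|\,\eta)=0$ for all $\eta\in F$, whence $\partial E(A)=0$ by non-degeneracy of $(\cdot|\cdot)$. Thus $\partial\L=\partial$ and any admissible $\vec\L$ must restrict to the identity on $\operatorname{ran}\partial$.

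Mimicking the decomposition $\L=\operatorname{id}-E$, I would write $\vec\L=\operatorname{id}_F-\vec E$ for an operator $\vec E\colon F\to F$ vanishing on $\operatorname{ran}\partial$. The $\Gamma_2$-inequality then becomes
\[
E\bigl((\xi|\xi)\bigr)-2\operatorname{Re}(\vec E\xi\,|\,\xi)\ \geq\ \tfrac{2C(E)}{1+C(E)}\,(\xi|\xi),\qquad\xi\in F.
\]
For $\xi=\partial A$, $\vec E\xi=0$ and the inequality collapses to $E(\Gamma(A))\geq\tfrac{2C(E)}{1+C(E)}\Gamma(A)$: precisely the Bakry--\'Emery estimate already proven in \Cref{lem:Bakry-Emery_dephasing} via the Pimsner--Popa bound $E(A_0^*A_0)\geq C(E)A_0^*A_0$ together with the explicit formula $\Gamma(A)=\tfrac{1}{2}(A_0^*A_0+E(A_0^*A_0))$, $A_0=A-E(A)$.

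The main technical step, and the chief obstacle, is to choose $\vec E$ so that the inequality extends to arbitrary $\xi\in F$: for a general $\xi$ the positive matrix $(\xi|\xi)$ does not have the symmetric form $\tfrac{1}{2}(X+E(X))$, so $\vec E=0$ is insufficient. A natural candidate is to take $\vec E$ as a suitable multiple of an orthogonal projection in the GNS Hilbert space $\bigl(F,\tau((\cdot|\cdot)\sigma)\bigr)$ onto a $\cJ$- and $V_t$-invariant complement of $\operatorname{ran}\partial$; the invariance of $\operatorname{ran}\partial$ under $\cJ$ and $(V_t)$, guaranteed by properties (d) and (e) of \Cref{prop:fodc}, yields the commutation conditions. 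The scalar factor is then calibrated so that $2\operatorname{Re}(\vec E\xi\,|\,\xi)$ absorbs exactly the part of $(\xi|\xi)$ not controlled by Pimsner--Popa, via a noncommutative completion-of-the-square argument using a Pimsner--Popa basis of the inclusion $N=E(M_n(\IC))\subset M_n(\IC)$.
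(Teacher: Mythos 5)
Your candidate operator is in fact the one the paper uses: writing $\xi=\partial A+\eta$ with $\eta$ orthogonal to $\ran\partial$ in the GNS inner product $\tau((\cdot\vert\cdot)\sigma)$, the paper sets $\vec\L\xi=\partial(\L A)+2K\eta=\partial A+2K\eta$, i.e.\ $\vec\L=\mathrm{id}+(2K-1)Q$ with $Q$ the orthogonal projection onto $(\ran\partial)^\perp$ and $K=\frac12+\frac{C(E)}{1+C(E)}$; this is exactly your $\vec E=-(2K-1)Q$. The reduction of the $\xi=\partial A$ case to \Cref{lem:Bakry-Emery_dephasing} is also correct.

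The gap is precisely in the step you flag as the chief obstacle, and the mechanism you sketch there is not the one that closes it. After substituting $\vec E=-(2K-1)Q$, the $(2K-1)$-weighted cross terms cancel between the two sides of your inequality, and what survives is
\begin{equation*}
E(\Gamma(A))-(2K-1)\Gamma(A)\;+\;E((\partial A\vert\eta))+E((\eta\vert\partial A))\;+\;(2K-1)(\eta\vert\eta)+E((\eta\vert\eta))\;\geq\;0.
\end{equation*}
So two things are needed. First, one must show that the \emph{matrix-valued} cross terms vanish: GNS-orthogonality only gives $\tau((\partial A\vert\eta)\sigma)=0$, not $(\partial A\vert\eta)=0$, and without $E((\eta\vert\partial A))=0$ the inequality does not decouple into a $\partial A$-part and an $\eta$-part. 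The paper obtains this from the bimodule structure: for $B\in\ran E$ one has $\partial B=0$, hence $\partial(AB)=\partial(A)B$ and $\tau\bigl(E((\eta\vert\partial A))B\sigma\bigr)=\tau\bigl((\eta\vert\partial(AB))\sigma\bigr)=0$; taking $B=E((\eta\vert\partial A))^\ast$ forces $E((\eta\vert\partial A))=0$. This is the missing idea. Second, once decoupled, the $\eta$-part is not where the Pimsner--Popa index enters at all: it reads $(2K-1)(\eta\vert\eta)+E((\eta\vert\eta))\geq 0$, which is immediate from $K\geq\frac12$ and positivity of $E$. The index $C(E)$ is used only in the Bakry--Émery estimate on $\ran\partial$, which you already have; no completion of the square or Pimsner--Popa basis is required, and it is unclear how your proposed calibration would supply the vanishing of $E((\eta\vert\partial A))$ that the argument actually hinges on.
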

\begin{proof}
The proof is similar to the one for complete graphs in the previous section. Let $(F,(V_t),\cJ,\partial)$ be a first-order differential calculus for $(P_t)$. Equip $F$ with the inner product $\langle\xi,\eta\rangle=\tau((\xi\vert\eta)\sigma)$.

Note that $\ran \partial:=\{\partial A:A\in M_n(\IC)\}$ forms a closed subspace of $L^2(M_n(\com),\sigma)$. So $\xi\in M_n(\com)$ has the unique decomposition $\xi=\partial(\A)+\eta$ with some $\A\in M_n(\IC)$ and $\eta\perp\ran\partial$. Define $\vec \L\xi=\partial(\L(\A))+2K\eta$ with $K=\frac 1 2+\frac{C(E)}{1+C(E)}$. Note that $\vec \L$ is well-defined since $\partial \A=\partial \B$ implies 
$$\partial(\L(\A))=\partial \partial^{\dagger,\sigma}\partial A=\partial \partial^{\dagger,\sigma}\partial B=\partial(\L(\B)).
$$
We have 
\begin{align*}
(\xi\vert\xi)=\Gamma(\A)+(\partial(\A)\vert\eta)+(\eta\vert\partial(\A))+(\eta\vert \eta)
\end{align*}
and
\begin{align*}
(\vec \L\xi\vert \xi)+(\xi\vert\vec\L\xi)-\L(\xi\vert\xi)
&=\left(\partial \L \A\vert\partial \A\right)+\left(\partial \L \A\vert \eta\right)+2K \left(\eta \vert \partial A\right)+2K \left(\eta\vert\eta\right)\\
&\quad +\left(\partial\A\vert\partial \L A\right)+\left( \eta\vert \partial \L \A\right)+2K \left( \partial \A\vert \eta \right)+2K \left(\eta\vert\eta\right)\\
&\quad -\L\left(\partial \A\vert \partial \A\right)-\L\left(\partial \A\vert \eta\right)-\L\left(\eta\vert \partial A\right)-\L\left(\eta\vert \eta\right)\\
&=2\Gamma_2(\A)+\left(\partial \L \A\vert \eta\right)+\left( \eta\vert \partial \L \A\right)-\L\left(\partial \A\vert \eta\right)-\L\left(\eta\vert \partial \A\right)\\
&\quad +2K \left(\eta \vert \partial \A\right)+2K \left( \partial \A\vert \eta \right)+4K \left(\eta\vert\eta\right)-\L\left(\eta\vert \eta\right).
\end{align*}
Note that $\L(E(\A))=0$, hence $\partial(E(\A))=0$ and thus $\partial\L\A=\partial A$. So 
\begin{equation*}
	\left(\partial \L \A\vert \eta\right)+\left( \eta\vert \partial \L \A\right)-\L\left(\partial \A\vert \eta\right)-\L\left(\eta\vert \partial \A\right)
	=	E\left(\partial \A\vert \eta\right)+E\left(\eta\vert \partial \A\right).
\end{equation*}
Moreover, if $\B\in\operatorname{ran}E$, then $\partial B=0$ and thus $\partial (AB)=\partial (A)B$. So
\begin{align*}
\tau(E((\eta\vert\partial(\A)))\B\sigma)=\tau(E((\eta\vert \partial(\A)\B))\sigma)=\tau((\eta\vert \partial(\A \B))\sigma)=0.
\end{align*}
Taking $y=E((\eta\vert\partial\A))^\ast$ we obtain $E((\eta\vert\partial\A))=0$. Taking adjoints, we also get $E((\partial\A\vert\eta))=0$. Therefore,
\begin{align*}
(\vec \L\xi\vert \xi)+(\xi\vert\vec\L\xi)-\L(\xi\vert\xi)-2K(\xi\vert\xi)
&=2\Gamma_2(\A)+2K \left(\eta \vert \partial \A\right)+2K \left( \partial \A\vert \eta \right)-2K(\xi\vert\xi)\\
&\quad +4K \left(\eta\vert\eta\right)-\L\left(\eta\vert \eta\right)\\
&=2\Gamma_2(\A)-2K(\partial \A\vert\partial \A)-2K(\eta\vert\eta)+4K \left(\eta\vert\eta\right)-\L\left(\eta\vert \eta\right)\\
&= 2\Gamma_2(\A)-2K\Gamma(\A)+(2K-1)(\eta\vert\eta)+E(\eta\vert\eta).
\end{align*}
The term $2\Gamma_2(A)-2K\Gamma(A)$ is non-negative by the previous lemma. The term  $(2K-1)(\eta\vert\eta)+E(\eta\vert\eta)$ is non-negative since $2K-1\ge 0$. Hence $(P_t)$ has intertwining curvature bounded below by $K$.
\end{proof}

\begin{example}
If $E(\A)=\tau(\A)\un$, then $(P_t)$ is called the \emph{depolarizing semigroup} on $M_n(\IC)$. In this case, $C(E)=\frac 1 n$, and we obtain that $(P_t)$ has intertwining curvature bounded below by $\frac 1 2+\frac 1 {n+1}$. We will show in the next subsection that $(P_t)$ satisfies the gradient estimate $\mathrm{GE}_\Lambda(\frac 1 2+\frac 1 n,\infty)$ for all operator mean functions $\Lambda$, which is stronger than bound we obtain from the intertwining criterion in the light of \Cref{thm:grad_est_from_intertwining_QMS}. 

In fact, the intertwining curvature bound $\frac 1 2+\frac 1{n+1}$ is sharp for $(P_t)$: As discussed in \Cref{rmk:Bakry-Emery_from_intertwining_QMS}, intertwining curvature bounded below by $K$ implies that
\begin{equation*}
\Gamma_2(\A)\geq K\Gamma(\A)
\end{equation*}
for all $\A\in M_n(\IC)$. As we we have shown in the proof of \Cref{lem:Bakry-Emery_dephasing},
\begin{align*}
\Gamma(\A)&=\frac 1 2(\A^\ast \A+\tau(\A^\ast \A)-\overline{\tau(\A)}\A-\tau(\A)\A^\ast)\\
\Gamma_2(\A)&=\frac 1 2(\Gamma(\A)+\tau(\A^\ast \A)-\abs{\tau(\A)}^2).
\end{align*}
Hence
\begin{align*}
	2\Gamma_2(\A)-2K\Gamma(\A)&=(1-2K)\Gamma(\A)+\tau(\A^\ast \A)-\abs{\tau(\A)}^2.
\end{align*}
For $\A=E_{jk}$ with $j\neq k$, it follows that
\begin{align*}
	2\Gamma_2(\A)-2K\Gamma(\A)&=\frac 1 2(1-2K)\left(E_{kk}+\frac 1 n\right)+\frac 1 n,
\end{align*}
which is positive if and only if $K\leq \frac 1 2 +\frac 1 {n+1}$.
\end{example}

\subsection{Intertwining curvature for generators with commuting jump operators}
In this subsection, we show a permanence property of intertwining curvature for quantum Markov semigroups that commute in a very strong sense.

\begin{theorem}\label{thm:intertwining_QMS_very_commuting}
For $m\in\mathbb N$ let $d_1,\dots,d_m\in\mathbb N$, $v_{j,k}\in M_n(\IC)$ and $\omega_{j,k}\in\IR$ for $1\leq k\leq m$, $1\leq j\leq d_k$ such that $\{v_{j,k}\mid 1\leq j\leq d_k\}=\{v_{j,k}^\ast\mid 1\leq j\leq d_k\}$ for $1\leq k\leq m$ and $\sigma v_{j,k}\sigma^{-1}=e^{-\omega_{j,k}}v_{j,k}$ for $1\leq k\leq m$, $1\leq j\leq d_k$. Assume that $[v_{i,k},v_{j,l}]=0$ for $1\leq k,l\leq m$, $1\leq i\leq d_k$, $1\leq j\leq d_l$ with $k\neq l$.

For $1\leq k\leq m$ let
\begin{equation*}
\L_k\colon M_n(\IC)\to M_n(\IC),\,A\mapsto \sum_{j=1}^{d_k}\big(e^{-\omega_{j,k}/2} v_{j,k}^\ast[v_{j,k},A]-e^{\omega_{j,k}/2}[v_{j,k},A]v_{j,k}^\ast\big)
\end{equation*}
and let $\L=\sum_{k=1}^m \L_k$.

Then $\L$ generates a quantum Markov semigroup $(P_t)$ that is GNS-symmetric with respect to $\sigma$, and if $(e^{-t\L_k})$ (which is also a quantum Markov semigroup that is GNS-symmetric with respect to $\sigma$) has intertwining curvature bounded below by $K$ for $1\leq k\leq m$, then so does $(P_t)$.
\end{theorem}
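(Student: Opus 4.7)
The plan is to use the generator characterization of intertwining curvature in \Cref{prop:Gamma_2_intertwining_QMS} and build an operator $\vec{\L}$ on a suitable first-order differential calculus for $\L$ by assembling the intertwining operators $\vec{\L}_k$ provided by the hypothesis. The first step is to unpack the commutation assumption: since each set $\{v_{j,l}\}_{j=1}^{d_l}$ is closed under adjoints, $[v_{i,k},v_{j,l}]=0$ for $k\neq l$ also yields $[v_{i,k},v_{j,l}^*]=0$. A direct expansion of Alicki's formula for $\L_l$ then gives the key identity
\begin{equation*}
[v_{i,k},\L_l(A)] = \L_l([v_{i,k},A]), \qquad k\neq l, \; A\in M_n(\IC),
\end{equation*}
from which the $\L_k$ mutually commute. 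It is immediate from Alicki's theorem applied to the concatenated list of jump operators (properties (a)--(e) transfer after a Gram--Schmidt orthogonalization if necessary) that $\L=\sum_k\L_k$ generates a quantum Markov semigroup that is GNS-symmetric with respect to $\sigma$.

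For the intertwining FODC I would take the Alicki FODC $(F_k,(V_t^k),\cJ_k,\partial_k)$ associated with $\L_k$ from \Cref{lem:Alicki_fodc} and form $F=\bigoplus_k F_k$, $V_t=\bigoplus_k V_t^k$, $\cJ=\bigoplus_k\cJ_k$, $\partial A=(\partial_k A)_k$. Choosing $\vec{\L}_k$ on $F_k$ to realize the intertwining curvature bound $K$ for $\L_k$ (via \Cref{prop:Gamma_2_intertwining_QMS}), I define $\vec{\L}$ on $F$ by
\begin{equation*}
(\vec{\L}\xi)_k = \vec{\L}_k\xi_k + \sum_{l\neq k}\L_l(\xi_k),
\end{equation*}
where $\L_l$ is applied coordinatewise on $\xi_k\in F_k=M_n(\IC)^{d_k}$.

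The identity $\vec{\L}\partial=\partial\L$ on the $k$-th summand reduces to $\vec{\L}_k\partial_k A=\partial_k\L_k A$ combined with $\L_l\partial_k A=\partial_k\L_l A$ for $l\neq k$, which is the commutation identity applied entrywise. The identities $\vec{\L}\cJ=\cJ\vec{\L}$ and $\vec{\L}V_s=V_s\vec{\L}$ both follow from the corresponding properties of each $\vec{\L}_k$ combined with the GNS-symmetry of each $\L_l$, which ensures that $\L_l$ commutes with both the modular flow $A\mapsto\sigma^{is}A\sigma^{-is}$ and the modular conjugation $A\mapsto\sigma^{1/2}A^*\sigma^{-1/2}$. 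For the crucial $\Gamma_2$-type inequality, expanding for $\xi=(\xi_k)_k\in F$ yields
\begin{align*}
\tfrac{1}{2}\bigl((\vec{\L}\xi\vert\xi)+(\xi\vert\vec{\L}\xi)-\L(\xi\vert\xi)\bigr)
&= \sum_k \tfrac{1}{2}\bigl((\vec{\L}_k\xi_k\vert\xi_k)+(\xi_k\vert\vec{\L}_k\xi_k)-\L_k(\xi_k\vert\xi_k)\bigr) \\
&\quad + \sum_k\sum_{l\neq k}\tfrac{1}{2}\bigl((\L_l\xi_k\vert\xi_k)+(\xi_k\vert\L_l\xi_k)-\L_l(\xi_k\vert\xi_k)\bigr).
\end{align*}
The first sum is bounded below by $K\sum_k(\xi_k\vert\xi_k)=K(\xi\vert\xi)$ by the intertwining curvature of each $\L_k$. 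In the Alicki representation $(\xi_k\vert\xi_k)=\sum_j\xi_{k,j}^*\xi_{k,j}$, and each cross term in the second sum equals $\sum_j\Gamma_l(\xi_{k,j})\geq 0$ by definition of the carré du champ of $\L_l$, which yields the required intertwining curvature bound $K$ for $\L$.

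The main technical subtlety is that the direct sum $F=\bigoplus_k F_k$ may fail to be minimal in the sense of condition (g) in \Cref{prop:fodc}. This can be handled either by restricting $\vec{\L}$ to the sub-bimodule generated by $\{\partial(A)B:A,B\in M_n(\IC)\}$, or by adjusting $\vec{\L}$ on the orthogonal complement to be multiplication by a sufficiently large constant, mimicking the trick already used in \Cref{ex:complete_graph} and \Cref{prop:intertwining_curv_dephasing}. Since the computation above uses only properties (a)--(f) and (h) of the quadruple, the intertwining curvature bound on $F$ descends to the canonical FODC via the uniqueness statement in \Cref{prop:fodc}.
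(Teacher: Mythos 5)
Your proposal is correct and follows essentially the same route as the paper's proof: the same direct-sum first-order differential calculus, the same choice $(\vec\L\xi)_k=\vec\L_k\xi_k+\sum_{l\neq k}\L_l^{\oplus d_k}\xi_k$, and the same splitting of the $\Gamma_2$-expression into the individual intertwining bounds plus cross terms that reduce to the nonnegativity of the carré du champ of $\L_l$. The only cosmetic differences are that the paper establishes $[\L_k,\L_l]=0$ and the GNS-symmetry of $(P_t)$ via the factorization $P_t=P_t^{(1)}\cdots P_t^{(m)}$ rather than by reapplying Alicki's theorem to the concatenated jump operators, and your explicit treatment of the minimality condition (g) makes precise a point the paper handles by defining $F$ as the span of $\{\partial(A)B\}$ inside $\bigoplus_k F_k$.
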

\begin{proof}
Let $P_t^{(k)}=e^{-t\L_k}$. By Alicki's theorem, $(P_t^{(k)})$ is a quantum Markov semigroup that is GNS-symmetric with respect to $\sigma$. Let $\partial_{j,k}(A)=e^{-\omega_{j,k}/4}[v_{j,k},A]$ and recall that $\partial_{j,k}^{\dagger,\sigma}$ is the adjoint with respect to the GNS inner product $\langle A,B\rangle_\sigma=\tau(A^\ast B\sigma)$.

We have
\begin{align*}
\langle A,\partial_{j,k}^{\dagger,\sigma}(B)\rangle_\sigma&=\langle \partial_{j,k}(A),B\rangle_\sigma\\
&=e^{-\omega_j/4}\tau([v_{j,k},A]^\ast B\sigma)\\
&=e^{-\omega_{j,k}/4}\tau\left[A^\ast(v_{j,k}^\ast B-B\sigma v_{j,k}^\ast\sigma^{-1})\sigma\right]\\
&=\tau\left[A^\ast(e^{-\omega_{j,k}/4}v_{j,k}^\ast B-e^{3\omega_{j,k}/4}Bv_{j,k}^\ast)\sigma\right].
\end{align*}
Thus $\partial_{j,k}^{\dagger,\sigma}(A)=e^{-\omega_{j,k}/4}v_{j,k}^\ast A-e^{3\omega_{j,k}/4}A v_{j,k}^\ast$ and
\begin{align*}
\sum_{j=1}^{d_k}\partial_{j,k}^{\dagger,\sigma}\partial_{j,k}(A)=\sum_{j=1}^{d_k}(e^{-\omega_{j,k}/2}v_{j,k}^\ast[v_{j,k},A]-e^{\omega_{j,k}/2}[v_{j,k},A]v_{j,k}^\ast)=\L_k(A).
\end{align*}
The commutation relation $[v_{i,k},v_{j,l}]=0$ for $k\neq l$ implies $[\partial_{i,k},\partial_{j,l}]=0$ and
\begin{align*}
\partial_{i,k}\partial_{j,l}^{\dagger,\sigma}(A)&=e^{-\omega_{i,k}/4-\omega_{j,l}/4}[v_{i,k},v_{j,l}^\ast A]-e^{-\omega_{i,k}/4+3\omega_{j,l}/4}[v_{i,k},Av_{j,l}^\ast]\\
&=e^{-\omega_{i,k}/4-\omega_{j,l}/4}(\underbrace{[v_{i,k},v_{j,l}^\ast]}_{=0}A+v_{j,l}^\ast[v_{i,k},A])-e^{-\omega_{i,k}/4+3\omega_{j,l}/4}([v_{i,k},A] v_{j,l}^\ast+A\underbrace{[v_{i,k},v_{j,l}^\ast]}_{=0})\\
&=\partial_{j,l}^{\dagger,\sigma}\partial_{i,k}(A).
\end{align*}
Here we used the fact that since $\{v_{j,l}\mid 1\leq j\leq d_l\}=\{v_{j,l}^\ast\mid 1\leq j\leq d_l\}$, we also have $[v_{i,k},v_{j,l}^\ast]=0$.

Hence $[\L_k,\L_l]=0$ for $1\leq k,l\leq m$ and $P_t=P_t^{(1)}\dots P_t^{(m)}$. From this we deduce that $(P_t)$ is a quantum Markov semigroup that is GNS-symmetric with respect to $\sigma$.

Let $(F_k,(V_t^{(k)}),\cJ_k,\partial_k)$ be the first-order differential calculus for $\L_k$ as in \Cref{lem:Alicki_fodc}. In particular, $F_k\subset M_n(\IC)^{d_k}$ and $\partial_k(A)=(\partial_{j,k}(A))_j$. Let $\partial(A)=(\partial_k(A))_k\in \bigoplus_k F_k$, $F=\mathrm{lin}\{\partial(A)B\mid A,B\in M_n(\IC)\}\subset \bigoplus_k F_k$, $V_t=\bigoplus_k V^{(k)}_t$, $\cJ=\bigoplus_k \cJ_k$. Direct computation shows that $(F,(V_t),\cJ,\partial)$ is a first-order differential calculus for $\L$.

Let $\vec\L_k$ be an operator that realizes the intertwining curvature lower bound $K$ for $\L_k$ and let $\vec\L=\bigoplus_k (\vec \L_k+\sum_{l\neq k}\bigoplus_{j=1}^{d_k}\L_l)$. We have
\begin{align*}
\vec\L(\partial A)&=(\vec\L_k(\partial_k A))_k+\sum_{l\neq k}((\L_l \partial_{j,k}A)_j)_k\\
&=(\partial_k(\L_k A))_k+\sum_{l\neq k}(\partial_k(\L_l A))_k\\
&=\partial(\L_k(A)+\sum_{l\neq k}\L_l(A))\\
&=\partial(\L(A)).
\end{align*}
The relations $\cJ\vec\L=\vec\L\cJ$ and $V_t\vec\L=\vec\L V_t$ follow directly from the corresponding identities for $\vec\L_k$.

For $\xi=(\xi_k)_k\in F$  we have
\begin{align*}
&\quad(\vec \L\xi\vert\xi)+(\xi\vert\vec \L\xi)-\L(\xi\vert\xi)\\
&=\sum_{k=1}^m \left((\vec\L_k\xi_k\vert\xi_k)+\sum_{l\neq k}(\L_l^{\oplus d_k}\xi_k\vert\xi_k)+(\xi_k\vert\vec \L_k\xi_k)+\sum_{l\neq k}(\xi_k\vert \L_l^{\oplus d_k}\xi_k)-\sum_{l=1}^m\L_l(\xi_k\vert\xi_k)\right)\\
&=\sum_{k=1}^m \left((\vec\L_k\xi_k\vert\xi_k)+(\xi_k\vert\vec\L_k\xi_k)-\L_k(\xi_k\vert\xi_k)+\sum_{l\neq k}(\L_l^{\oplus d_k}\xi_k\vert\xi_k)+(\xi_k\vert\L_l^{\oplus d_k}\xi_k)-\L_l(\xi_k\vert\xi_k))\right).
\end{align*}
By the intertwining curvature bound for $\L_k$, we have $(\vec\L_k\xi_k\vert\xi_k)+(\xi_k\vert\vec\L_k\xi_k)-\L_k(\xi_k\vert\xi_k)\geq 2K(\xi_k\vert\xi_k)$.

Moreover, writing $\xi_k=(A_{j,k})_j\in M_n(\IC)^{\oplus d_k}$, one has
\begin{align*}
&(\L_l^{\oplus d_k}\xi_k\vert\xi_k)+(\xi_k\vert\L_l^{\oplus d_k}\xi_k)-\L_l(\xi_k\vert\xi_k)\\
&\quad=\sum_{j=1}^{d_k}\L_l(A_{j,k})^\ast A_{j,k}+A_{j,k}^\ast \L_l(A_{j,k})-\L_l(A_{j,k}^\ast A_{j,k})\\
&\quad\geq 0.
\end{align*}
Therefore,
\begin{align*}
(\vec \L\xi\vert\xi)+(\xi\vert\vec \L\xi)-\L(\xi\vert\xi)\geq 2K\sum_{k=1}^m (\xi_k\vert\xi_k)=2K(\xi\vert\xi).
\end{align*}
Now it follows from \Cref{prop:Gamma_2_intertwining_QMS} that $\L$ has intertwining curvature bounded below by $K$.
\end{proof}

\begin{corollary}
If $p_1,\dots,p_d\in M_n(\IC)$ are commuting projections and $\alpha_1,\dots,\alpha_d>0$, then 
\begin{equation*}
\L\colon M_n(\IC)\to M_n(\IC),\,A\mapsto\sum_{k=1}^d \alpha_k(p_k A+Ap_k-2p_k A p_k)
\end{equation*}
generates a tracially symmetric quantum Markov semigroup on $M_n(\IC)$ that has intertwining curvature bounded below by $\min_{1\leq k\leq d}\alpha_k$.
\end{corollary}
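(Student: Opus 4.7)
The plan is to reduce to Theorem~\ref{thm:intertwining_QMS_very_commuting} by treating each summand as a single-jump generator. Applying that theorem with $m=d$ and $d_k=1$, I set the jump operators to be $v_{1,k}=\sqrt{\alpha_k}\,p_k$ for $1\le k\le d$. These are self-adjoint (so the symmetry condition $\{v_{j,k}\}=\{v_{j,k}^\ast\}$ holds trivially), pairwise commuting across different $k$ (since the $p_k$'s commute), and $\sigma v_{1,k}\sigma^{-1}=v_{1,k}$ for $\sigma=\un$, giving $\omega_{1,k}=0$. Alicki's formula then yields
\[
\mathcal L_k(A)=v_{1,k}[v_{1,k},A]-[v_{1,k},A]v_{1,k}=\alpha_k[p_k,[p_k,A]]=\alpha_k(p_kA+Ap_k-2p_kAp_k),
\]
which matches the summand in the definition of $\mathcal L$. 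By Theorem~\ref{thm:intertwining_QMS_very_commuting}, the corollary reduces to showing that each individual $\mathcal L_k$ has intertwining curvature bounded below by $\alpha_k$, whence $\mathcal L$ inherits the uniform bound $\min_k\alpha_k$.

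For the single-projection case I would use the first-order differential calculus from Lemma~\ref{lem:Alicki_fodc}: with $d_k=1$ this is $F_k=M_n(\IC)$ as an $M_n(\IC)$-bimodule under ordinary left/right multiplication, equipped with $(\xi\mid\eta)=\xi^\ast\eta$, $\mathcal J_k\xi=\xi^\ast$, $(V_t)$ trivial, and $\partial_k A=\sqrt{\alpha_k}\,[p_k,A]$. The key observation is that $\mathcal L_k=\alpha_k(I-E_k)$, where $E_k(A)=p_kAp_k+(1-p_k)A(1-p_k)$ is the conditional expectation onto the commutant of $p_k$. My proposed intertwining operator is
\[
\vec{\mathcal L}_k\xi=\alpha_k(\xi+E_k(\xi)).
\]
Since $[p_k,A]\in\ker E_k$ while $E_k(A)$ commutes with $p_k$, both $\vec{\mathcal L}_k(\partial_k A)$ and $\partial_k(\mathcal L_k A)$ collapse to $\alpha_k\sqrt{\alpha_k}[p_k,A]$, giving the intertwining identity; and $\vec{\mathcal L}_k\mathcal J_k=\mathcal J_k\vec{\mathcal L}_k$ follows from $E_k(\xi^\ast)=E_k(\xi)^\ast$.

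The core calculation is the $\Gamma_2$-criterion of Proposition~\ref{prop:Gamma_2_intertwining_QMS}. Writing $\xi=\xi_d+\xi_o$ with $\xi_d=E_k(\xi)$ and $\xi_o=\xi-E_k(\xi)$, I would first establish the block-structure identities
\[
E_k(\xi^\ast\xi)=\xi_d^\ast\xi_d+\xi_o^\ast\xi_o,\qquad\mathcal L_k(\xi^\ast\xi)=\alpha_k(\xi_d^\ast\xi_o+\xi_o^\ast\xi_d),
\]
which hold because $\xi_d^\ast\xi_o$ and $\xi_o^\ast\xi_d$ are themselves off-diagonal relative to the decomposition $\un=p_k+(1-p_k)$. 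Expanding $(\vec{\mathcal L}_k\xi\mid\xi)+(\xi\mid\vec{\mathcal L}_k\xi)-\mathcal L_k(\xi\mid\xi)$ using these formulas, the off-diagonal cross terms cancel exactly and the expression collapses to $2\alpha_k\xi^\ast\xi+2\alpha_k\xi_d^\ast\xi_d$, which dominates $2\alpha_k(\xi\mid\xi)$ since $\xi_d^\ast\xi_d\ge 0$.

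The main obstacle is guessing the correct $\vec{\mathcal L}_k$. The naive scalar $\vec{\mathcal L}_k=\alpha_kI$ satisfies the intertwining identity but fails the $\Gamma_2$-criterion: the term $-\mathcal L_k(\xi^\ast\xi)$ contributes a purely off-diagonal error $-\alpha_k(\xi_d^\ast\xi_o+\xi_o^\ast\xi_d)$ that is not sign-definite, and the adjustment $+\alpha_kE_k(\xi)$ is precisely what is needed to produce a matching $+\alpha_k(\xi_d^\ast\xi_o+\xi_o^\ast\xi_d)$ to cancel it. Applying Proposition~\ref{prop:intertwining_curv_dephasing} directly to $\mathcal L_k/\alpha_k$ would yield only the weaker bound $\alpha_k\bigl(\tfrac12+\tfrac{C(E_k)}{1+C(E_k)}\bigr)$ involving the Pimsner--Popa index, so the refined projection-adapted argument is genuinely necessary.
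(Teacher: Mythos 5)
Your proposal is correct and takes essentially the same route as the paper: the reduction via \Cref{thm:intertwining_QMS_very_commuting} is identical, and your $\vec{\L}_k\xi=\alpha_k(\xi+E_k(\xi))$ is exactly the paper's operator $\vec\L_k(A)=[p_k,\L_k(B)]+2\alpha_k C$ rewritten in terms of the pinching $E_k$, since $C=E_k(A)$ and $[p_k,\L_k(B)]=\alpha_k[p_k,B]=\alpha_k(A-E_k(A))$. The $\Gamma_2$-verification likewise agrees, both computations yielding $2\alpha_k\xi^\ast\xi+2\alpha_k E_k(\xi)^\ast E_k(\xi)\geq 2\alpha_k\xi^\ast\xi$.
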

\begin{proof}
Let $\L_k(A)=\alpha_k(p_k A+A p_k-2p_k A p_k)$. By the previous theorem it suffices to show that $\L_k$ has intertwining curvature bounded below by $\alpha_k$.

Every $A\in M_n(\IC)$ can be written as $A=[p_k,B]+C$ with $B,C\in M_n(\IC)$ such that $[p_k,C]=0$. For example, take $B=A-p_kAp_k$ and $C=p_k A p_k+(\un-p_k)A(\un-p_k).$ Let $\vec\L_k(A)=[p_k,\L_k(B)]+2\alpha_k C$. We have
\begin{align*}
&\quad\;\vec \L_k(A)^\ast A+A^\ast\vec\L_k(A)-\L_k(A^\ast A)-2\alpha_k A^\ast A\\
&=[p_k,\L_k(B)]^\ast [p_k,B]+[p_k,B]^\ast [p_k,\L_k(B)]-\L_k([p_k,B]^\ast[p_k,B])-2\alpha_k[p_k,B]^\ast[p_k,B]\\
&\quad+[p_k,\L_k(B)]^\ast C+2\alpha_k[p_k,B]^\ast C-\L_k([p_k,B]^\ast C)-2\alpha_k[p_k,B]^\ast C\\
&\quad+C^\ast[p_k,\L_k(B)]+2\alpha_kC^\ast[p_k,B]-\L_k(C^\ast [p_k,B])-2\alpha_k C^\ast [p_k,B]\\
&\quad+2\alpha_k C^\ast C-\L_k(C^\ast C).
\end{align*}
Since $[p_k,C]=0$ and $\L_k=[p_k,[p_k,\cdot\,]]$, we have $\L_k([p_k,B]^\ast C)=\L_k([p_k,B])^\ast C=[p_k,\L_k(B)]^\ast C$. Hence the second line vanishes. The third line is the adjoint of the second line and thus also zero. Moreover, $[p_k,C]=0$ implies $[p_k,C^\ast C]=0$ and hence $\L_k(C^\ast C)=0$. Therefore, the fourth line is positive.

It remains to treat the first line, which is nothing but the Bakry--Émery criterion for $\L_k$. With $\L_k=\alpha_k[p_k,[p_k,\cdot\,]]$ we get
\begin{align*}
[p_k,\L_k(B)]&=\alpha_k[p_k,p_kB+Bp_k-2p_k B p_k]\\
&=\alpha_k(p_k B-p_k B p_k+p_k B p_k-B p_k)\\
&=\alpha_k[p_k, B]
\end{align*}
and
\begin{align*}
\L_k([p_k,B]^\ast [p_k,B])&=[p_k,[p_k,B^\ast p_k B+p_k B^\ast Bp_k-p_k B^\ast p_k B-B^\ast p_k B p_k]]=0.
\end{align*}
Thus
\begin{align*}
&[p_k,\L_k(B)]^\ast [p_k,B]+[p_k,B]^\ast [p_k,\L_k(B)]-\L_k([p_k,B]^\ast[p_k,B])-2\alpha_k[p_k,B]^\ast[p_k,B]\\
&\quad=2\alpha_k[p_k,B]^\ast[p_k,B]-2\alpha_k[p_k,B]^\ast[p_k,B]\\
&\quad=0.
\end{align*}
Altogether we have shown that 
\begin{equation*}
\vec \L_k(A)^\ast A+A^\ast\vec\L_k(A)-\L_k(A^\ast A)=2\alpha_k C^\ast C+2\alpha_k A^\ast A\geq 2\alpha_k A^\ast A.
\end{equation*}
It follows from \Cref{prop:Gamma_2_intertwining_QMS} that $\L_k$ has intertwining curvature bounded below by $\alpha_k$.
\end{proof}

\begin{remark}
This result shows that the intertwining curvature bound obtained in \Cref{prop:intertwining_curv_dephasing} is in general not optimal. Indeed, let $\L(A)=pA+Ap-2pAp$. As we have just seen, this operator has intertwining curvature bounded below by $1$. However,
\begin{equation*}
pA+Ap-2pAp=A-(pAp+(1-p)A(1-p)),
\end{equation*}
and $E(A)=pAp+(\un-p)A(\un-p)$ defines a conditional expectation. This conditional expectation is not equal to the identity unless $p=0$ or $p=\un$, hence $C(E)<1$. Hence the intertwining curvature lower bound $\frac 1 2+\frac{C(E)}{1+C(E)}$ from \Cref{prop:intertwining_curv_dephasing} is strictly smaller than $1$.
\end{remark}

\begin{remark}
The gradient estimate $\mathrm{GE}_\Lambda(1,\infty)$ in this setting (and more generally when $M_n(\IC)$ is replaced by a finite von Neumann algebra) was obtained in \cite[Theorem 5.1]{WZ21} by a non-intertwining approach.
\end{remark}

\subsection{Gradient estimate for the depolarizing semigroup}
In this section we show that for depolarizing semigroups, the gradient estimate obtained from the intertwining curvature lower bound in \Cref{prop:intertwining_curv_dephasing} can be improved and in particular, the sharp modified logarithmic Sobolev inequality for the depolarizing semigroup on qubits can be derived from the entropic curvature lower bound.

Let $(P_t)$ be the depolarizing semigroup on $M_n(\IC)$, that is, $P_t(x)=e^{-t}\A+(1-e^{-t})\tau(\A)\un$. This semigroup is tracially symmetric. Its generator is given by $\L(\A)=\A-\tau(\A)\un$. We first give a convenient presentation of the first-order differential calculus associated with $(P_t)$.

\begin{lemma}\label{lem:diff_calc_depolarizing}
Let $F=\{\sum_j \A_j\otimes \B_j\in M_n(\IC)\otimes M_n(\IC):\sum_j \A_j \B_j=0\}$ with $\A(a\otimes b)B=\A a\otimes b\B$ and $(\A_1\otimes \B_1\vert \A_2\otimes \B_2)=\frac 1 2\tau(\A_1^\ast \A_2)\B_1^\ast \B_2$, $V_t=\mathrm{id}$, $\cJ(\A\otimes \B)=-\B^\ast\otimes \A^\ast$, $\partial(\A)=\A\otimes \un-\un\otimes \A$. Then $(F,(V_t),\cJ,\partial)$ is a first-order differential calculus.
\end{lemma}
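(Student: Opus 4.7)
The plan is to verify the four Hilbert $C^\ast$-bimodule axioms on $F$, the derivation identity for $\partial$, and conditions (a)--(h) of \Cref{prop:fodc}. Because $(P_t)$ is tracially symmetric, $\sigma=\un$ and $V_t=\mathrm{id}$ make conditions (a), (d), (e) automatic, so the real content lies in the remaining items.

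First I would verify structural well-definedness. The defining constraint $\sum_j \A_j\B_j = 0$ is preserved by the bimodule action (since $\A(\sum_j\A_j\B_j)\B = 0$), by $\partial$ (since $\A\cdot\un - \un\cdot\A = 0$), and by $\cJ$ (since $\sum_j \B_j^\ast \A_j^\ast = (\sum_j \A_j\B_j)^\ast$). The Leibniz rule for $\partial$ is a telescoping check, and $F$ is finite-dimensional as a subspace of $M_n(\IC)\otimes M_n(\IC)$. Sesquilinearity and the bimodule-compatibility $(\xi|\A\eta\B)=(\A^\ast\xi|\eta)\B$ of the pairing are built into the formula. For positivity of $(\xi|\xi)$ with $\xi=\sum_j \A_j\otimes \B_j$, I would factor the Gram matrix $P_{jk}:=\tau(\A_j^\ast \A_k)=(Q^\ast Q)_{jk}$ and rewrite
\begin{equation*}
(\xi|\xi) = \frac 1 2\sum_l \Bigl(\sum_j Q_{lj}\B_j\Bigr)^\ast\Bigl(\sum_j Q_{lj}\B_j\Bigr) \geq 0;
\end{equation*}
non-degeneracy on $F$ follows by expanding $\xi$ in a $\tau$-orthonormal basis of the left tensor factor.

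The remaining conditions of \Cref{prop:fodc} reduce to direct checks on simple tensors. Property (b) is the identity $\cJ(\A(\A_0\otimes\B_0)\B) = \B^\ast(\cJ(\A_0\otimes\B_0))\A^\ast$; property (c) $\tau((\cJ\xi|\cJ\eta)) = \tau((\eta|\xi))$ reduces via $(\cJ(\A_1\otimes\B_1)\,|\,\cJ(\A_2\otimes\B_2)) = \tfrac 1 2\tau(\B_1\B_2^\ast)\A_1\A_2^\ast$ to cyclicity of $\tau$; property (f) is immediate: $\cJ\partial(\A) = \A^\ast\otimes\un - \un\otimes\A^\ast = \partial(\A^\ast)$. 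For the spanning condition (g) I would use the telescoping identity
\begin{equation*}
\sum_j \partial(\A_j)\B_j = \sum_j \A_j\otimes\B_j \,-\, \un\otimes \sum_j \A_j\B_j,
\end{equation*}
which recovers every element of $F$ as soon as the second sum vanishes. For (h), the carré du champ of the depolarizing generator $\L(\A) = \A - \tau(\A)\un$ expands to
\begin{equation*}
\Gamma(\A,\B) = \frac 1 2\bigl(\A^\ast\B + \tau(\A^\ast\B)\un - \overline{\tau(\A)}\B - \tau(\B)\A^\ast\bigr),
\end{equation*}
and I would match this termwise with the four-term expansion of $(\partial\A\,|\,\partial\B) = (\A\otimes\un - \un\otimes\A\,|\,\B\otimes\un - \un\otimes\B)$ using the defining formula for the pairing.

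I do not anticipate any serious obstacle: all the content is in checking that the simple formulas are well-posed on $F$ (controlled entirely by the constraint $\sum_j\A_j\B_j=0$) and in the one explicit identity matching $\Gamma$ with the canonical pairing of the ``noncommutative differentials'' $\A\otimes\un - \un\otimes\A$.
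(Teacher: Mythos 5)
Your proposal is correct and follows essentially the same route as the paper: the paper's proof dismisses properties (a)--(f) as easy, establishes (g) via the same telescoping identity $\sum_j\partial(\A_j)\B_j=\sum_j\A_j\otimes\B_j-\un\otimes\sum_j\A_j\B_j$, and verifies (h) by the same four-term expansion of $(\partial\A\mid\partial\B)$ against $\Gamma(\A,\B)$. The only difference is that you spell out the routine checks (constraint preservation, Leibniz rule, positivity via the Gram-matrix factorization) that the paper leaves implicit.
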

\begin{proof}
Properties (a)--(f) are easy to see. For property (g), note that if $\sum_j \A_j \B_j=0$, then
\begin{equation*}
\sum_j \A_j\otimes \B_j=\sum_j \A_j\otimes \B_j-1\otimes \A_j \B_j=\sum_j \partial(\A_j)\B_j.
\end{equation*}
Finally,
\begin{align*}
\Gamma(\A,\B)&=\frac 1 2(\A^\ast \B+\tau(\A^\ast \B)-\overline{\tau(\A)}\B-\A^\ast\tau(\B))\\
&=(\un\otimes \A\vert \un\otimes \B)+(\A\otimes \un\vert \B\otimes \un)-(\A\otimes \un\vert \un\otimes \B)-(\un\otimes \A\vert \B\otimes \un)\\
&=(\A\otimes \un-\un\otimes \B\vert \B\otimes \un-\un\otimes \B)\\
&=(\partial(\A)\vert\partial(\B)).\qedhere
\end{align*}
\end{proof}

\begin{remark}
Since $V_t=\mathrm{id}$, we have $V^\rho_{z}\xi=\rho^{iz}\xi\rho^{-iz}$. Writing $L(\rho)\xi=\rho\xi$, $R(\rho)\xi=\xi\rho$, we conclude
\begin{align*}
\langle\xi,\eta\rangle_{\Lambda,\rho}=\tau((\xi\vert f(V^\rho_{-i})\eta)\rho)=\tau((\xi\vert f(L(\rho)R(\rho)^{-1})R(\rho)\eta))=\frac 1 2\langle \xi,\Lambda(L(\rho),R(\rho))\eta\rangle_{\tau\otimes\tau}.
\end{align*}
\end{remark}

\begin{lemma}\label{lem:orthogonality_depolarizing}
Let $(F,(V_t),\cJ,\partial)$ be a first-order differential calculus for $(P_t)$. Let $(e_i)$ be an orthonormal basis of $\IC^n$ and $E_{ij}=\langle e_j,\cdot\,\rangle e_i$. If $\rho=\sum_{p=1}^n \lambda_p E_{pp}$ with $\lambda_p>0$, and $1\leq i,j,k,l\leq n$ with $k\neq l$, then 
\begin{equation*}
\langle \partial(E_{ij}),\partial(E_{kl})\rangle_{\Lambda,\rho}=0
\end{equation*}
unless $i=k$ and $j=l$.

In particular, $\langle \partial(A),\partial(B)\rangle_{\Lambda,\rho}=0$ if $A$ is diagonal and $B$ has vanishing diagonal in the basis $(e_i)$.
\end{lemma}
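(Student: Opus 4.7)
The plan is to exploit the concrete model from \Cref{lem:diff_calc_depolarizing} for the first-order differential calculus of the depolarizing semigroup. By the uniqueness statement in \Cref{prop:fodc} the value of $\langle\cdot,\cdot\rangle_{\Lambda,\rho}$ is independent of the choice of calculus, so it suffices to work in $M_n(\IC)\otimes M_n(\IC)$ with $V_t=\mathrm{id}$ and $\partial(A)=A\otimes\un-\un\otimes A$. Because the depolarizing semigroup is tracially symmetric, $\sigma=\un$, and the remark following \Cref{lem:diff_calc_depolarizing} reduces the inner product to
\begin{equation*}
\langle\xi,\eta\rangle_{\Lambda,\rho}=\tfrac{1}{2}\langle\xi,\Lambda(L(\rho),R(\rho))\eta\rangle_{\tau\otimes\tau},
\end{equation*}
where $L(\rho)$ and $R(\rho)$ denote left and right multiplication on the tensor product via the bimodule structure. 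Since $\rho=\sum_p\lambda_p E_{pp}$ is diagonal, the elementary tensors $E_{ab}\otimes E_{cd}$ form a simultaneous eigenbasis for the commuting operators $L(\rho)$ and $R(\rho)$ (with eigenvalues $\lambda_a$ and $\lambda_d$ respectively), hence a pairwise $\langle\cdot,\cdot\rangle_{\Lambda,\rho}$-orthogonal family in $M_n(\IC)\otimes M_n(\IC)$.

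Next I would insert $\un=\sum_p E_{pp}$ to expand
\begin{equation*}
\partial(E_{ij})=\sum_p E_{ij}\otimes E_{pp}-\sum_p E_{pp}\otimes E_{ij},
\end{equation*}
and similarly for $\partial(E_{kl})$, then split $\langle\partial(E_{ij}),\partial(E_{kl})\rangle_{\Lambda,\rho}$ into the four resulting double sums. The two \emph{parallel} pieces $\langle E_{ij}\otimes E_{pp},E_{kl}\otimes E_{rr}\rangle_{\Lambda,\rho}$ and $\langle E_{pp}\otimes E_{ij},E_{ss}\otimes E_{kl}\rangle_{\Lambda,\rho}$ survive only when $i=k$ and $j=l$, by the orthogonality of the basis. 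The two \emph{cross} pieces $\langle E_{ij}\otimes E_{pp},E_{ss}\otimes E_{kl}\rangle_{\Lambda,\rho}$ and $\langle E_{qq}\otimes E_{ij},E_{kl}\otimes E_{rr}\rangle_{\Lambda,\rho}$ instead require $p=k=l$ and $q=k=l$ respectively, and these vanish identically under the standing hypothesis $k\neq l$. Consequently every surviving contribution forces $(i,j)=(k,l)$, which is the first assertion.

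For the \emph{in particular} clause, write $A=\sum_i a_i E_{ii}$ and $B=\sum_{k\neq l}b_{kl}E_{kl}$ and apply linearity of $\partial$ together with sesquilinearity of $\langle\cdot,\cdot\rangle_{\Lambda,\rho}$ to reduce $\langle\partial A,\partial B\rangle_{\Lambda,\rho}$ to a sum of terms $\overline{a_i}\,b_{kl}\,\langle\partial(E_{ii}),\partial(E_{kl})\rangle_{\Lambda,\rho}$ with $k\neq l$; by the first part each such term would require $(i,i)=(k,l)$, which is impossible, so every term vanishes. The only real obstacle is the bookkeeping across the four double sums; once one records that $L(\rho)$ and $R(\rho)$ act on $E_{ab}\otimes E_{cd}$ by $\lambda_a$ and $\lambda_d$ respectively, the Kronecker-delta accounting is essentially automatic and the hypothesis $k\neq l$ is precisely what kills the off-diagonal cross contributions.
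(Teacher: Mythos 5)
Your proof is correct and follows essentially the same route as the paper: both reduce to the concrete tensor-product model of \Cref{lem:diff_calc_depolarizing}, use that $\rho$ diagonal makes $L(\rho)$ and $R(\rho)$ diagonal in the matrix-unit basis, and then let orthogonality of the $E_{ab}\otimes E_{cd}$ together with $k\neq l$ kill all cross terms. The only difference is bookkeeping — the paper evaluates the four trace terms $\tau(A^\ast E_{pp}B)\tau(E_{qq})-\cdots$ with Kronecker deltas, while you expand $\partial(E_{ij})$ directly in elementary tensors — and both arguments are equally valid.
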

\begin{proof}
For any $\A,\B\in M_n(\IC)$ we have
\begin{equation*}
\langle\partial(\A),\partial(\B)\rangle_{\Lambda,\rho}=\frac 1 2 \sum_{p,q=1}^n\Lambda(\lambda_p,\lambda_q)\langle \partial(\A),E_{pp}\partial(\B)E_{qq}\rangle_{\tau\otimes\tau}
\end{equation*}
and
\begin{align*}\label{eq:*}
&\langle \partial(\A),E_{pp}\partial(\B)E_{qq}\rangle_{\tau\otimes\tau} \\ 
&\quad=\tau(\A^\ast E_{pp}\B)\tau(E_{qq})-\tau(\A^\ast E_{pp})\tau(\B E_{qq})-\tau(E_{pp}\B)\tau(\A^\ast E_{qq})+\tau(E_{pp})\tau(\A^\ast \B E_{qq}).\tag{$*$}
\end{align*}
If $\A=E_{ij}$ and $B=E_{kl}$, then 
\begin{align*}
&\langle \partial(\A),E_{pp}\partial(\B)E_{qq}\rangle_{\tau\otimes\tau} \\ 
&\quad=\tau(E_{ji} E_{pp} E_{kl})\tau(E_{qq})-\tau(E_{ji} E_{pp})\tau(E_{kl} E_{qq})-\tau(E_{pp}E_{kl})\tau(E_{ji} E_{qq})+\tau(E_{pp})\tau(E_{ji} E_{kl} E_{qq})\\
&\quad=\frac{1}{n^2}[\delta_{ip}\delta_{pk}\delta_{lj}-2\delta_{ip}\delta_{jp}\delta_{kq}\delta_{lq}+\delta_{ik}\delta_{lq}\delta_{qj}].
\end{align*}
The first and the last term vanish unless $(i,j)=(k,l)$, while the second term vanishes unless $i=j,k=l$, which is ruled out by our assumption. In particular, $\langle\partial(E_{ii}),\partial(E_{kl})\rangle_{\Lambda,\rho}=0$ if $k\neq l$, which implies the last part.
\end{proof}

\begin{theorem}\label{thm:GE_depolarizing}
The depolarizing semigroup on $M_n(\IC)$ satisfies the gradient estimate $\mathrm{GE}_{\Lambda}(\frac 12+\frac 1 n,\infty)$ for every operator mean function $\Lambda$.
\end{theorem}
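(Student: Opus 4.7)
The plan exploits the particularly simple action of the depolarizing semigroup on $\partial A$. Since $\partial$ annihilates constants and $P_tA = e^{-t}A + (1-e^{-t})\tau(A)\un$, we have $\partial(P_tA) = e^{-t}\partial A$. With $K = \tfrac 1 2 + \tfrac 1 n$, the gradient estimate therefore rewrites as
\[
\|\partial A\|_{\Lambda, P_t\rho}^2 \ge e^{-(1-2/n)t}\|\partial A\|_{\Lambda, \rho}^2 \qquad (\star)
\]
for all positive definite $\rho$ and self-adjoint $A \in M_n(\IC)$. To prove $(\star)$, I would use that $L(P_t\rho) = e^{-t}L(\rho) + (1-e^{-t})\tau(\rho)\,\mathrm{id}_F$ (and similarly for $R$), which together with joint operator concavity of the Kubo--Ando mean $\Lambda$ yields
\[
\Lambda(L(P_t\rho), R(P_t\rho)) \ge e^{-t}\,\Lambda(L(\rho), R(\rho)) + (1-e^{-t})\tau(\rho)\,\mathrm{id}_F.
\]
Testing against $\partial A$ via the identification of $\|\cdot\|_{\Lambda,\rho}^2$ with $\tfrac 1 2\langle\cdot,\Lambda(L(\rho),R(\rho))\cdot\rangle_{\tau\otimes\tau}$ from the remark after \Cref{lem:diff_calc_depolarizing}, this gives
\[
\|\partial A\|_{\Lambda, P_t\rho}^2 \ge e^{-t}\|\partial A\|_{\Lambda, \rho}^2 + (1-e^{-t})\tau(\rho)\, c_A, \qquad c_A := \tau(A^*A) - |\tau(A)|^2.
\]

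The central algebraic input is the sharp estimate $\tau(\rho)\, c_A \ge \tfrac{2}{n}\|\partial A\|_{\Lambda,\rho}^2$. Both sides are invariant under $A \mapsto A - \tau(A)\un$, so I may assume $\tau(A) = 0$, whence $c_A = \tau(A^2)$. For self-adjoint $A$, the relation $\cJ\partial A = \partial A$ implies that only the symmetric part $\Lambda^{\mathrm{sym}}(s,t) := \tfrac 1 2(\Lambda(s,t)+\Lambda(t,s))$ contributes to $\|\partial A\|_{\Lambda,\rho}^2$; since the arithmetic mean is the largest symmetric Kubo--Ando mean, this gives $\|\partial A\|_{\Lambda,\rho}^2 \le \|\partial A\|_{\mathrm{arith},\rho}^2 = \tfrac{1}{2}[\tau(\rho A^2) + \tau(\rho)\tau(A^2)]$, the last equality by direct computation using $\tau(A) = 0$. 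It thus suffices to show $\tau(\rho A^2) \le (n-1)\tau(\rho)\tau(A^2)$ for traceless self-adjoint $A$. Diagonalizing $A = U\,\mathrm{diag}(a_1,\ldots,a_n)\,U^*$ with $\sum a_i = 0$ and writing $r_i := (U^*\rho U)_{ii}\ge 0$, the inequality becomes $\sum_i a_i^2 r_i \le \tfrac{n-1}{n}(\sum_j a_j^2)(\sum_l r_l)$, which follows from $\max_j a_j^2 \le \tfrac{n-1}{n}\sum_i a_i^2$. The latter is Cauchy--Schwarz applied to $a_j = -\sum_{i \neq j}a_i$.

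The remaining ingredient is the elementary convexity estimate $e^{(2/n)t} - 1 \le \tfrac{2}{n}(e^t - 1)$, which rearranges (after multiplying by $e^{-t}$) to $e^{-(1-2/n)t} - e^{-t} \le \tfrac{2}{n}(1 - e^{-t})$. Chaining the three ingredients:
\begin{align*}
\|\partial A\|_{\Lambda, P_t\rho}^2 &\ge e^{-t}\|\partial A\|_{\Lambda,\rho}^2 + (1-e^{-t})\tau(\rho)c_A\\
&\ge e^{-t}\|\partial A\|_{\Lambda,\rho}^2 + \tfrac{2}{n}(1-e^{-t})\|\partial A\|_{\Lambda,\rho}^2\\
&\ge e^{-(1-2/n)t}\|\partial A\|_{\Lambda,\rho}^2,
\end{align*}
establishing $(\star)$. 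The hard part is the sharp algebraic bound: the constant $\tfrac{2}{n}$ must match exactly the coefficient appearing in the exponential inequality, and this tight matching is precisely what produces the optimal $K = \tfrac 1 2 + \tfrac 1 n$, strictly improving on the intertwining bound $K = \tfrac 1 2 + \tfrac 1 {n+1}$ from \Cref{prop:intertwining_curv_dephasing}.
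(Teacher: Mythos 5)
Your proposal is correct, and it reaches the theorem by a genuinely different route than the paper. The paper diagonalizes $\rho$, proves an orthogonality lemma (\Cref{lem:orthogonality_depolarizing}) to split $A$ into its diagonal and off-diagonal parts, disposes of the diagonal part via the complete-graph computation of \Cref{ex:complete_graph}, and then runs an entry-wise analysis of $e^{(1-2/n)t}\Lambda(\lambda_p(t),\lambda_q(t))$ split into the cases $p\neq q$ and $p=q$. You instead stay basis-free: joint concavity of the Kubo--Ando mean along the affine path $t\mapsto P_t\rho$ (here all of $L(\rho),R(\rho),\mathrm{id}$ commute, so this is just scalar joint concavity of $\Lambda$ after simultaneous diagonalization), the $\cJ$-invariance $\cJ\partial A=\partial A$ for self-adjoint $A$ to pass to the symmetrized mean, domination of symmetric operator means by the arithmetic mean, and the single algebraic inequality $\tau(\rho A^2)\leq(n-1)\tau(\rho)\tau(A^2)$ for traceless self-adjoint $A$, which your Cauchy--Schwarz argument proves correctly. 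I checked the bookkeeping: $\tfrac 1 2\langle\partial A,\partial A\rangle_{\tau\otimes\tau}=\tau(A^*A)-\abs{\tau(A)}^2$, the arithmetic-mean value $\tfrac 1 2[\tau(\rho A^2)+\tau(\rho)\tau(A^2)]$, and the exponential inequality $e^{(2/n)t}-1\leq\tfrac 2 n(e^t-1)$ (which needs $n\geq 2$, as in the paper) all come out right, and the constants chain to exactly $K=\tfrac 1 2+\tfrac 1 n$. The two proofs ultimately rest on the same two facts about operator means --- concavity, and the arithmetic mean dominating all symmetric means --- but your version avoids the orthogonality lemma and the diagonal/off-diagonal case split, making for a shorter and more conceptual argument; the paper's explicit eigenvalue computation has the advantage of exhibiting exactly which terms are tight. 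One point worth making explicit if you write this up: the reduction to $\Lambda^{\mathrm{sym}}$ is essential, since the arithmetic mean does \emph{not} dominate non-symmetric operator means (e.g.\ the right-trivial mean), and this is precisely where the self-adjointness of $A$ enters.
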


\begin{proof}
We have to prove that
\begin{equation}\label{ineq:GE depolarizing}
\norm{\partial P_t(\A)}_{\Lambda,\rho}^2\leq e^{-(1+\frac 2 n)t}\norm{\partial \A}_{\Lambda,P_t(\rho)}^2
\end{equation}
for every self-adjoint $\A\in M_n(\IC)$, positive definite $\rho\in M_n(\IC)$ and $t\geq 0$.

Since $\partial P_t(\A)=e^{-t}\partial \A$, this reduces to
\begin{equation*}
\norm{\partial \A}_{\Lambda,\rho}^2\leq e^{(1-\frac 2 n)t}\norm{\partial \A}_{\Lambda,P_t(\rho)}^2.
\end{equation*}
Hence it is sufficient to show that
\begin{equation}\label{ineq:monotonicity t=0}
t\mapsto e^{(1-\frac 2 n)t}\norm{\partial\A}_{\Lambda,P_t(\rho)}^2
\end{equation}
has a minimum at $t=0$.

Let $\lambda_p$ be the eigenvalue of $\rho$ to the unit eigenvector $e_p$ and let $\lambda_p(t)=e^{-t}\lambda_p+(1-e^{-t})\tau(\rho)$. Then $\lambda_p(t)$ is the eigenvalue of $P_t(\rho)$ to the eigenvector $e_p$. By \Cref{lem:orthogonality_depolarizing}, it suffices to prove the assertion for $\A$ diagonal in the basis $(e_p)$ and $\A$ having vanishing diagonal in the basis $(e_p)$, as the cross terms vanish following the computations in the proof of the previous lemma. The result for diagonal $\A$ follows from our result on complete graphs in \Cref{ex:complete_graph}.

So it reduces to showing \eqref{ineq:monotonicity t=0} for self-adjoint $A$ with vanishing diagonal. Recall that  for $k\neq l$, the proof of Lemma \ref{lem:orthogonality_depolarizing} gives $\langle \partial(E_{kl}),E_{pp}\partial(E_{kl}) E_{qq}\rangle_{\tau\otimes\tau}=0$ unless $(i,j)=(k,l)$. Since $A$ is self-adjoint, 
$$\langle \partial(A),E_{pp}\partial(A) E_{qq}\rangle_{\tau\otimes\tau}=\langle \partial(A),E_{qq}\partial(A) E_{pp}\rangle_{\tau\otimes\tau}.$$
Writing $A=\sum_{k\neq l}A_{kl}E_{kl}$, we have
\begin{align*}
&\quad e^{(1-\frac 2 n)t}\norm{\partial(A)}_{\Lambda,P_t(\rho)}^2\\
&=\frac 1 2 \langle\partial(A),\Lambda(P_t(\rho)\otimes\un,\un\otimes P_t(\rho))\partial(A)\rangle_{\tau\otimes\tau}\\
&=\frac 1 2 e^{(1-\frac 2 n)t}\sum_{p,q=1}^n \Lambda(\lambda_p(t),\lambda_q(t))\langle \partial(A),E_{pp}\partial(A) E_{qq}\rangle_{\tau\otimes\tau}\\
&=\frac 1 2 e^{(1-\frac 2 n)t}\sum_{p,q=1}^n \Lambda(\lambda_q(t),\lambda_p(t))\langle \partial(A),E_{pp}\partial(A) E_{qq}\rangle_{\tau\otimes\tau}\\
&=\frac 1 2 \sum_{k\neq l}\sum_{p\neq q}^n e^{(1-\frac 2 n)t}\frac{\Lambda(\lambda_p(t),\lambda_q(t))+\Lambda(\lambda_q(t),\lambda_p(t))}{2}|A_{kl}|^2\langle \partial(E_{kl}),E_{pp}\partial(E_{kl}) E_{qq}\rangle_{\tau\otimes\tau}\\
\end{align*}
and to prove \eqref{ineq:monotonicity t=0} we divide the above into two parts $p\neq q$ and $p=q$:
\begin{align*}
&\quad e^{(1-\frac 2 n)t}\norm{\partial(A)}_{\Lambda,P_t(\rho)}^2\\
&=\frac 1 2 \sum_{k\neq l}\sum_{p\neq q} e^{(1-\frac 2 n)t}\frac{\Lambda(\lambda_p(t),\lambda_q(t))+\Lambda(\lambda_q(t),\lambda_p(t))}{2}|A_{kl}|^2\langle \partial(E_{kl}),E_{pp}\partial(E_{kl}) E_{qq}\rangle_{\tau\otimes\tau}\\
&\quad +\frac 1 2 \sum_{k\neq l}\sum_{p=1}^n e^{(1-\frac 2 n)t}\lambda_p(t)|A_{kl}|^2\langle \partial(E_{kl}),E_{pp}\partial(E_{kl}) E_{pp}\rangle_{\tau\otimes\tau}.
\end{align*}
Thus it suffices to show that 
\begin{equation}\label{ineq:p neq q}
e^{(1-\frac 2 n)t}\frac{\Lambda(\lambda_p(t),\lambda_q(t))+\Lambda(\lambda_q(t),\lambda_p(t))}{2}
\ge \frac{\Lambda(\lambda_p,\lambda_q)+\Lambda(\lambda_q,\lambda_p)}{2},\qquad p\neq q
\end{equation}
and for $k\neq l$
\begin{equation}\label{ineq: p=q}
\sum_{p=1}^n e^{(1-\frac 2 n)t}\lambda_p(t)\langle \partial(E_{kl}),E_{pp}\partial(E_{kl}) E_{pp}\rangle_{\tau\otimes\tau}
\ge \sum_{p=1}^n \lambda_p\langle \partial(E_{kl}),E_{pp}\partial(E_{kl}) E_{pp}\rangle_{\tau\otimes\tau}.
\end{equation}

In the former case $p\neq q$, we can use concavity of $\Lambda$ to get
\begin{equation*}
e^{(1-\frac 2 n)t}\frac{\Lambda(\lambda_p(t),\lambda_q(t))+\Lambda(\lambda_q(t),\lambda_p(t))}{2}
\geq e^{-\frac 2 n t}\frac{\Lambda(\lambda_p,\lambda_q)+\Lambda(\lambda_q,\lambda_p)}{2}+(e^{(1-\frac 2 n)t}-e^{-\frac 2 n t})\tau(\rho).
\end{equation*}
As $p\neq q$, we have $\frac 1 2(\lambda_p+\lambda_q)\leq \frac n 2\tau(\rho)$. Moreover, $(a,b)\mapsto \frac{1}{2}[\Lambda(a,b)+\Lambda(b,a)]$ is a symmetric mean so that 
$$\frac{1}{2}[\Lambda(a,b)+\Lambda(b,a)]\le \frac{a+b}{2}.$$
Thus
\begin{equation*}
(e^{(1-\frac 2 n)t}-e^{-\frac 2 n t})\tau(\rho)
\ge \frac{1}{n}(e^{(1-\frac 2 n)t}-e^{-\frac 2 n t})(\lambda_p+\lambda_q)
\ge \frac{2}{n}(e^{(1-\frac 2 n)t}-e^{-\frac 2 n t})\frac{\Lambda(\lambda_p,\lambda_q)+\Lambda(\lambda_q,\lambda_p)}{2}
\end{equation*}
To show \eqref{ineq:p neq q}, it remains to prove
$$
e^{-\frac{2}{n}t}+\frac{2}{n}(e^{(1-\frac 2 n)t}-e^{-\frac 2 n t})\ge 1.
$$
This holds for all $t\ge 0$ since both sides agree at $t=0$, and the derivative of the left side is $(1-\frac{2}{n})\frac{2}{n}e^{-\frac{2}{n}t}(e^t-1)\ge 0$. Here we used the fact that $n\ge 2$.


In the latter case $p=q$, we have by the proof of Lemma \ref{lem:orthogonality_depolarizing} that 
\begin{equation*}
\sum_{p=1}^n e^{(1-\frac 2 n)t}\lambda_p(t)\langle \partial(E_{kl}),E_{pp}\partial(E_{kl}) E_{pp}\rangle_{\tau\otimes\tau}
=\frac{1}{n^2}\sum_{p=1}^n e^{(1-\frac 2 n)t}\lambda_p(t)(\delta_{pk}+\delta_{pl})
=\frac{1}{n^2} e^{(1-\frac 2 n)t}(\lambda_k(t)+\lambda_l(t)).
\end{equation*}
So \eqref{ineq: p=q} is nothing but 
\begin{equation*}
\frac{1}{n^2} e^{(1-\frac 2 n)t}(\lambda_k(t)+\lambda_l(t))
\ge \frac{1}{n^2} (\lambda_k+\lambda_l),\qquad k\neq l
\end{equation*}
which follows from \eqref{ineq:p neq q} by choosing $\Lambda$ there to be the left trivial mean.

Altogether, we finish the proof of the theorem. 
\end{proof}

\begin{remark}
In the above theorem, \eqref{ineq:GE depolarizing} also holds for all symmetric operator mean $\Lambda$ and all $A\in M_n(\com)$ that is not necessarily self-adjoint. In fact, in this case we don't need the symmetrization of the operator mean using the self-adjointness of $A$, and we may divide 
\begin{align*}
e^{(1-\frac 2 n)t}\norm{\partial(A)}_{\Lambda,P_t(\rho)}^2=\frac 1 2 e^{(1-\frac 2 n)t}\sum_{p,q=1}^n \Lambda(\lambda_p(t),\lambda_q(t))\langle \partial(A),E_{pp}\partial(A) E_{qq}\rangle_{\tau\otimes\tau}
\end{align*}
directly into $p\neq q$ and $p=q$ cases and the rest of the proof is almost identical. In view of Example \ref{ex:complete_graph}, we cannot remove the symmetry of $\Lambda$ and the self-adjointness of $A$ simultaneously.
\end{remark}
\begin{remark}
In the case when $\Lambda$ is the logarithmic mean, this result improves upon previous work by Carlen--Maas \cite{CM20b}, who showed $\mathrm{GE}_\Lambda\left(\frac 1 2+\frac 1{2n},\infty\right)$.
\end{remark}

\begin{remark}
If $\Lambda$ is the logarithmic mean, the gradient estimate $\mathrm{GE}_\Lambda\left(\frac 1 2+\frac 1 n,\infty\right)$ for the depolarizing semigroup implies a number of functional inequalities (see \cite[Section 11]{CM20a}). In particular, $(P_t)$ satisfies the exponential decay bound for the Umegaki relative entropy
\begin{equation*}
\tau(P_t(\rho)\log P_t(\rho))\leq e^{-(1+\frac 2 n)t}\tau(\rho\log\rho),\qquad \rho\geq 0,\,\tau(\rho)=1,
\end{equation*}
or equivalently, the modified logarithmic Sobolev inequality
\begin{equation*}
(1+\frac 2 n)\tau(\rho\log\rho)\leq \tau(\L(\rho)\log\rho),\qquad \rho\geq 0,\,\tau(\rho)=1.
\end{equation*}
In the case $n=2$, we obtain the modified logarithmic Sobolev inequality
\begin{equation*}
2\tau(\rho\log\rho)\leq \tau(\L(\rho)\log\rho),\qquad \rho\geq 0,\,\tau(\rho)=1,
\end{equation*}
where the constant $2$ can be seen to be sharp by comparison with the spectral gap (see \cite[Proposition 11.6]{CM20a} for example).
\end{remark}

\emergencystretch=1em

\printbibliography

\end{document}